\newtheorem{theorem}{Theorem}[section]
\newtheorem{thm}[theorem]{Theorem}
\newtheorem{cor}[theorem]{Corollary}
\newtheorem{lemm}[theorem]{Lemma}
\newtheorem{prop}[theorem]{Proposition}
\theoremstyle{definition}
\newtheorem{definition-theorem}[theorem]{Definition-Theorem}
\newtheorem{defi}[theorem]{Definition}
\newtheorem{remk}[theorem]{Remark}
\newtheorem{exam}[theorem]{Example}
\newtheorem{notation}[theorem]{Notation}
\newcommand{\fftors}{\mbox{\rm f-tors}\hspace{.01in}}
\newcommand{\DD}{\mathsf{D}}
\newcommand{\KKb}{\mathsf{K}^{\rm b}}
\newcommand{\TT}{\operatorname{\mathcal T}\nolimits}
\newcommand{\La}{\Lambda}
\newcommand{\sttilt}{\mbox{\rm s$\tau$-tilt}\hspace{.01in}}
\newcommand{\I}{\widetilde{I}}
\newcommand{\tg}{\tilde{g}}
\newcommand{\wLa}{\widetilde{\Lambda}}
\newcommand{\add}{\mathsf{add}\nolimits}
\newcommand{\proj}{\mathsf{proj}}
\newcommand{\id}{\operatorname{id}\nolimits}
\newcommand{\Hom}{\operatorname{Hom}\nolimits}
\newcommand{\Ext}{\operatorname{Ext}\nolimits}
\newcommand{\Tor}{\operatorname{Tor}\nolimits}
\newcommand{\Soc}{\operatorname{Soc}\nolimits}
\newcommand{\Top}{\operatorname{Top}\nolimits}
\newcommand{\RHom}{\mathbf{R}\strut\kern-.2em\operatorname{Hom}\nolimits}
\newcommand{\HH}{\mathcal{H}}
\def\Im{\mathop{\mathrm{Im}}\nolimits}
\def\Ker{\mathop{\mathrm{Ker}}\nolimits}
\def\Hom{\mathop{\mathrm{Hom}}\nolimits}
\def\End{\mathop{\mathrm{End}}\nolimits}
\def\Ext{\mathop{\mathrm{Ext}}\nolimits}
\def\mod{\mathsf{mod}}
\def\fd{\mathop{\mathsf{fd}}\nolimits}
\def\RHom{\mathop{\mathbb R\mathrm{Hom}}\nolimits}
\newcommand{\Tors}{\mathsf{Tors}\hspace{.01in}}
\newcommand{\Sub}{\mathsf{Sub}\hspace{.01in}}
\newcommand{\Fac}{\mathsf{Fac}\hspace{.01in}}
\def\add{{\mathsf{add}}}
\def\ann{\mathop{\mathrm{ann}}\nolimits}
\def\op{\mathop{\mathrm{op}}\nolimits}
\begin{document}
\title{Classifying $\tau$-tilting modules over preprojective algebras of Dynkin type}
\author{Yuya Mizuno}
\thanks{The author is supported by Grant-in-Aid
for JSPS Fellowships No.23.5593.}
\address{Graduate School of Mathematics\\ Nagoya University\\ Frocho\\ Chikusaku\\ Nagoya\\ 464-8602\\ Japan}
\email{yuya.mizuno@math.nagoya-u.ac.jp}

\begin{abstract}
We study support $\tau$-tilting modules over preprojective algebras of Dynkin type. 
We classify basic support $\tau$-tilting modules by giving 
a bijection with elements in the corresponding Weyl groups.
Moreover we show that they are in bijection with the set of torsion classes, the set of torsion-free classes  and other important objects in representation theory. 
We also study $g$-matrices of support $\tau$-tilting modules, which show terms of minimal projective
 presentations of indecomposable direct summands. We give an explicit description of $g$-matrices and prove that cones given by $g$-matrices coincide with chambers of the associated root systems.
\end{abstract}

\maketitle
\tableofcontents

\section*{Introduction}
Preprojective algebras first appeared in the work of Gelfand-Ponomarev \cite{GP}. Since then, they have been one of the important objects in the representation theory of algebras, for example \cite{DR1,BGL,DR2}.  
Preprojective algebras appear also in many branches of mathematics. 
For instance, they play central roles in Lusztig's Lagrangian construction of semicanonical basis \cite{L1,L2}. 

In \cite{BIRS} (see also \cite{IR}), the authors studied preprojective algebras of non-Dynkin quivers 
via tilting theory. As an application, they succeeded to construct a large class of 2-Calabi-Yau categories which have close connections with cluster algebras. 
On the other hand, in Dynkin cases (i.e. the underlying graph of a quiver is 
$A_n$ ($n\geq1$), $D_n$ ($n\geq 4$) and $E_n$ ($n=6,7,8$)), the preprojective algebras are selfinjective, so that all tilting modules are trivial (i.e. projective) and they are not effective tools. 
In this paper, instead of tilting modules, we will study support $\tau$-tilting modules (Definition \ref{spt}) over the preprojective algebras of Dynkin type. 

The notion of support $\tau$-tilting modules was introduced in \cite{AIR}, as a generalization of tilting modules. 
Support $\tau$-tilting modules have several nice properties. For example,
it is shown that there are deep connections between $\tau$-tiltihg theory, torsion theory, silting theory and cluster tilting theory. 
Support $\tau$-tilting modules also have nicer mutation properties than tilting modules, that is, mutation of support $\tau$-tilting modules is always possible (Definition-Theorem \ref{air mutation}). 
Moreover, support $\tau$-tilting modules over selfinjective algebras are useful to provide tilting complexes \cite{M}. 
It is therefore fruitful to investigate these remarkable modules for preprojective algebras. 
To explain our results, we give the following set-up. 

Let $Q$ be a finite connected acyclic quiver with the set $Q_0=\{1,\ldots, n\}$ of vertices, $\La$ the preprojective algebra of $Q$ and $I_i$ the two-sided ideal of $\La$ generated by $1-e_i$, where $e_i$ is an idempotent of $\La$ corresponding to $i\in Q_0$. 
These ideals are quite useful to study structure of categories \cite{IR,BIRS,AIRT,ORT}. 
They also play important roles in Geiss-Leclerc-Schr{\"o}er's construction of cluster monomials of certain types of cluster algebras \cite{GLS1,GLS3} and Baumann-Kamnitzer-Tingley's theory of affine MV polytopes \cite{BKT}.  

If $Q$ is non-Dynkin, then $I_i$ is a tilting module. 
On the other hand, if $Q$ is Dynkin, then $I_i$ is never a tilting module. 
Instead, we show that $I_i$ is a support $\tau$-tilting module (Lemma \ref{I_i}) and this observation is crucial in this paper. 
We denote by $\langle I_1,\ldots,I_n\rangle$ the set of ideals of $\La$ of the form $I_{i_1}I_{i_2}\cdots I_{i_k}$ for some $k\geq0$ and $i_1,\ldots,i_k\in Q_0$. 

Our main goal is to give the following bijections which 
provide a complete description of support $\tau$-tilting modules over preprojective algebras of Dynkin type.

\begin{thm}[Theorem \ref{number}]\label{2}
Let $Q$ be a Dynkin quiver with the set $Q_0=\{1,\ldots, n\}$ of vertices and $\La$ the preprojective algebra of $Q$. 
There are bijections between the following objects.
\begin{itemize}
\item[(a)]The elements of the Weyl group $W_Q$ associated with $Q$.
\item[(b)]The set $\langle I_1,\ldots,I_n\rangle$.
\item[(c)]The set $\sttilt\La$ of isomorphism classes of basic support $\tau$-tilting $\La$-modules.
\item[(d)]The set $\sttilt\La^{\op}$ of isomorphism classes of basic support $\tau$-tilting $\La^{\op}$-modules.
\end{itemize}
\end{thm}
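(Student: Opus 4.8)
The plan is to establish the bijections in the cyclic order $(a)\leftrightarrow(b)\leftrightarrow(c)$ and then deduce $(c)\leftrightarrow(d)$ by a symmetry argument. The key technical input is the observation that each $I_i$ is a support $\tau$-tilting module (Lemma \ref{I_i}) together with the fact, due to \cite{AIR}, that support $\tau$-tilting modules correspond bijectively to functorially finite torsion classes; I will use this throughout to pass freely between modules and torsion classes. For a word $w=s_{i_1}\cdots s_{i_k}$ in the Weyl group $W_Q$, I assign the ideal $I_w:=I_{i_1}I_{i_2}\cdots I_{i_k}\in\langle I_1,\ldots,I_n\rangle$, and I must first check this is well defined, i.e.\ depends only on $w$ and not on the chosen reduced (or non-reduced) expression. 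This is the heart of the construction: it amounts to verifying the braid relations $I_iI_jI_i\cdots=I_jI_iI_j\cdots$ (with the appropriate number of factors given by the Coxeter exponent $m_{ij}$) and the relation $I_i^2=I_i$, which should follow from a direct analysis of the ideals $I_i=\La(1-e_i)\La$ inside $\La$, using the mesh relations defining the preprojective algebra. Once $w\mapsto I_w$ is well defined, surjectivity onto $\langle I_1,\ldots,I_n\rangle$ is immediate from the definition of the latter set.

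For the bijection between $(b)$ and $(c)$, I send an ideal $T\in\langle I_1,\ldots,I_n\rangle$ to the $\La$-module $\La/T$, or better, I identify $T$ with the torsion class $\Fac(T)$ it generates; the content is that (i) each such ideal $I_{i_1}\cdots I_{i_k}$, viewed as a submodule of $\La$, is a support $\tau$-tilting module, and (ii) every basic support $\tau$-tilting module arises this way. For (i) I would argue by induction on $k$: multiplication by $I_j$ should correspond, on the level of torsion classes, to an operation that sends a functorially finite torsion class to another one — concretely, the torsion class $I_jT$ should be obtainable from $T$ by the torsion-class-theoretic analogue of left mutation, or by intersecting with / applying a suitable functor built from $I_j$. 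Here one uses that $\La$ is selfinjective (Dynkin case), so that $\tau$-tilting theory is especially rigid. For (ii), the crucial point is that the poset of functorially finite torsion classes is connected under mutation (by \cite{AIR}, since there are finitely many — which itself needs justification, e.g.\ via $\tau$-tilting finiteness of $\La$), and each single mutation step is realized by multiplication by some $I_i$; hence every support $\tau$-tilting module is reached from $\La$ itself (the ideal corresponding to the empty word) by a sequence of such multiplications, i.e.\ lies in $\langle I_1,\ldots,I_n\rangle$.

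To finish, injectivity of $(a)\to(b)$ — equivalently that distinct Weyl group elements give distinct ideals — is obtained by comparing with the established bijection $(b)\leftrightarrow(c)$ and a dimension/length count: I expect that the number of indecomposable summands, or the $g$-vectors, of $\La/I_w$ records the length $\ell(w)$, and more precisely that $I_w=I_{w'}$ forces $w=w'$ because one can recover $w$ from the torsion class, for instance by reading off which mutations (hence which simple reflections, in which order up to commutation) were applied. Alternatively, one shows directly that $|\langle I_1,\ldots,I_n\rangle|\le|W_Q|$ by the well-definedness already proved and $|\langle I_1,\ldots,I_n\rangle|\ge|W_Q|$ by exhibiting enough distinct ideals, so the surjection $(a)\to(b)$ is a bijection. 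Finally, $(c)\leftrightarrow(d)$ follows because $\La\cong\La^{\op}$ as algebras up to the natural duality interchanging the two (the preprojective algebra is isomorphic to its opposite via the involution on arrows), or more robustly because $W_Q\cong W_{Q^{\op}}$ canonically and both $(c)$ and $(d)$ have been identified with $(a)$; applying the result already proved to $Q^{\op}$ yields $(a)\leftrightarrow(d)$, and composing gives the desired bijection. The main obstacle I anticipate is step $(a)\to(b)$ well-definedness, specifically proving the braid relations among the ideals $I_i$ — this requires a genuine computation in the preprojective algebra rather than formal nonsense, and getting the idempotency $I_i^2=I_i$ and the braid relations to hold on the nose (not just up to isomorphism of modules) is where the Dynkin hypothesis and the precise structure of $\La$ must be used.
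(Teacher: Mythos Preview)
Your overall architecture matches the paper's, but there is a genuine logical gap in step (ii), and your injectivity arguments remain too vague. For (ii) you want connectedness of the mutation graph, which you derive from $\tau$-tilting finiteness of $\La$ --- and you correctly flag that finiteness ``itself needs justification'', but provide none. There is no cheap independent proof: $\tau$-tilting finiteness of the Dynkin preprojective algebra is a \emph{consequence} of this theorem, not an input. The paper reverses the logic to break the circularity. One first proves that the set $\{I_w : w\in W_Q\}$ is closed under mutation (every mutation of $I_w$ is $I_{s_iw}$ for some $i$; this is the paper's Theorem \ref{mutation}, and requires real work --- Lemmas \ref{bongartz} and \ref{approx} and Proposition \ref{TI}). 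Hence this set is a connected component of $\HH(\sttilt\La)$, and since $|W_Q|<\infty$ it is a \emph{finite} connected component. Then one invokes the general fact from \cite{AIR} (Proposition \ref{connect} here) that a finite connected component must be all of $\HH(\sttilt\La)$. Finiteness of $\sttilt\La$ falls out at the end rather than being assumed.

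Your injectivity sketches for $(a)\to(b)$ are not proofs; the paper's argument is quite different from anything you suggest: it embeds $Q$ in an extended Dynkin quiver $\widetilde{Q}$, writes $\La=\widetilde{\La}/\langle e_0\rangle$ and $I_w=\widetilde{I}_w/\langle e_0\rangle$, and reduces to the known non-Dynkin injectivity from \cite{BIRS}. There is also a point you omit for $(b)\to(c)$: two distinct ideals could a priori be isomorphic as right modules, and the paper rules this out by a short selfinjectivity argument (Lemma \ref{right-two}). Two smaller corrections: the map $(b)\to(c)$ is $T\mapsto T$ (the ideal itself is the support $\tau$-tilting module), not $T\mapsto\La/T$; and for $(c)\leftrightarrow(d)$ the paper does not invoke $\La\cong\La^{\op}$ but rather shows along the way that each $I_w$ is simultaneously a left and right support $\tau$-tilting module (a ``support $\tau$-tilting ideal''), so the same set gives both bijections.
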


These bijections give a strong relationship between the representation theory of algebras and root systems as below. 
It is known that support $\tau$-tilting modules have a natural structure of partially ordered set (Definition \ref{def partial}). 
We relate this partial order with that of the Weyl group.

\begin{thm}[Theorem \ref{main3}]
The bijection $W_Q\to\sttilt\La$ in Theorem \ref{2} gives an isomorphism of partially ordered sets 
$(W_Q,\leq_L)$ and $(\sttilt\La,\leq)^{\op}$, where $\leq_L$ is the left weak order on $W_Q$.\end{thm}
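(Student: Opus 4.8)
The plan is to establish the order isomorphism by factoring it through the set $\langle I_1,\ldots,I_n\rangle$ of ideals, on which both orders become transparent. First I would recall that the bijection $W_Q\to\sttilt\La$ of Theorem \ref{2} is built as the composite $W_Q\to\langle I_1,\ldots,I_n\rangle\to\sttilt\La$, sending a reduced expression $w=s_{i_1}\cdots s_{i_k}$ to the ideal $I_{i_1}\cdots I_{i_k}$ and then to that ideal viewed as a $\La$-module. So it suffices to understand how the left weak order on $W_Q$ and the order $\leq$ on $\sttilt\La$ each translate to a relation on $\langle I_1,\ldots,I_n\rangle$, and to check these translations agree (after reversing one of them). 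The key structural input is that for $w$ with reduced expression as above, $\ell(s_i w)>\ell(w)$ if and only if $I_i I_w\subsetneq I_w$ as ideals (equivalently $I_iI_w$ is again the ideal attached to the longer element $s_iw$), which should follow from the length-additivity statements proved earlier together with Lemma \ref{I_i}; this identifies the left weak order with the order generated by $J\mapsto I_iJ$ on ideals.

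Next I would translate the support $\tau$-tilting order. Recall that for support $\tau$-tilting modules $M,N$ one has $M\geq N$ iff $\Fac M\supseteq\Fac N$ as torsion classes (Definition \ref{def partial}), so under the bijection (b)$\leftrightarrow$(c) the order on $\sttilt\La$ corresponds to inclusion of the torsion classes $\Fac(\text{--})$ attached to the ideals. The crucial computation is therefore to identify $\Fac(I_{i_1}\cdots I_{i_k})$ intrinsically: I expect that $\Fac(I_iJ)\subseteq\Fac(J)$ always (a one-step generation inequality), so that multiplying on the left by $I_i$ can only shrink the associated torsion class. Combined with the previous paragraph — where $I_iI_w\subsetneq I_w$ corresponds to going \emph{up} in left weak order — this shows the two orders are related by an order-reversal, i.e. $w\leq_L w'$ in $W_Q$ iff the torsion class of $I_w$ contains that of $I_{w'}$, iff the support $\tau$-tilting module of $I_{w'}$ is $\leq$ that of $I_w$. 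That is exactly the assertion $(W_Q,\leq_L)\cong(\sttilt\La,\leq)^{\op}$.

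To make this rigorous, I would argue as follows. Since both posets are finite and the map is already a bijection, it is enough to show it and its inverse are order-preserving on \emph{covers}: a cover $w\lessdot_L s_iw$ in left weak order must map to a cover in $(\sttilt\La,\leq)^{\op}$, and conversely. For the forward direction I would use mutation: the step $I_w\rightsquigarrow I_iI_w$ should realize a single left mutation of support $\tau$-tilting modules (Definition-Theorem \ref{air mutation}), which is precisely a cover relation in $(\sttilt\La,\leq)$; the direction of the mutation (left vs. right) is governed by the sign $\ell(s_iw)-\ell(w)$, giving the op. For the converse one uses that every cover in $\sttilt\La$ is a mutation at some vertex $i$, lift it back through the bijection, and invoke injectivity to see it comes from exactly one weak-order cover. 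The main obstacle, I expect, is the bookkeeping in the key lemma $\ell(s_iw)>\ell(w)\iff I_iI_w\subsetneq I_w$ together with its module-theoretic counterpart "left multiplication by $I_i$ corresponds to mutation at $i$": one must show that when $\ell(s_iw)=\ell(w)-1$ the product $I_iI_w$ does \emph{not} give a new ideal but rather returns to a shorter word, and pin down on which side the mutation occurs in each case. This hinges on the earlier analysis of the ideals $I_i$ and their products, and on relating that analysis to the $\tau$-tilting mutation rule; once that dictionary is in place, the order isomorphism is a formal consequence of comparing cover relations.
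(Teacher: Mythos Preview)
Your proposal is correct and, in its third paragraph, takes essentially the same approach as the paper: reduce to cover relations (equivalently, Hasse quivers), and identify covers on both sides with single mutations $I_w\rightsquigarrow I_iI_w$ governed by the sign of $\ell(s_iw)-\ell(w)$. The paper packages the key dictionary you flag as the ``main obstacle'' into Lemma \ref{length} and Corollary \ref{equivalent} (namely $\ell(w)<\ell(s_iw)\iff I_iI_w\neq I_w\iff$ left mutation at $i$), and then simply observes that the two Hasse quivers coincide; your second paragraph's detour through $\Fac(I_iJ)\subseteq\Fac(J)$ is not needed once you argue via covers, since mutations are already known to be exactly the cover relations in $\sttilt\La$ (Theorem \ref{Hasse-mut}).
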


We also study the $g$-\emph{matrix} (Definition \ref{def g-vector}) of support $\tau$-tilting module $T$, which shows terms of minimal projective presentations of each indecomposable direct summand of $T$.  
By $g$-matrices, we define \emph{cones}, which provide a geometric realization of the simplicial complex of support $\tau$-tilting modules. 
We give a complete description of $g$-matrices in terms of reflection transformations and show that their cones coincide with chambers of the associated root system. 
The results are summarized as follows, where ${\bf e}_1,\ldots,{\bf e}_n$ is the canonical basis of $\mathbb{Z}^n$, $C_0:=\{a_1{\bf e}_1+\cdots+a_n{\bf e}_n \ |\ a_i\in \mathbb{R}_{>0}\}$ and $\sigma^*: W_Q\rightarrow {GL}_n(\mathbb{Z})$ is the contragradient of the geometric representation (Definition \ref{geometric representation}). 

\begin{thm}[Theorem \ref{weyl group}]
Let $Q$ be a Dynkin quiver with the set $Q_0=\{1,\ldots, n\}$ of vertices and $\La$ the preprojective algebra of $Q$. 
\begin{itemize}
\item[(a)] The set of $g$-matrices of support $\tau$-tilting $\La$-modules coincides with $\sigma^*(W_Q)$. 
\item[(b)]For any $w\in W_Q$, we have 
$$C(w)=\sigma^*(w)(C_0),$$
where $C(w)$ denotes the cone of the support $\tau$-tilting module $I_w$ $($Definition \ref{def g-vector}$)$.
In particular, cones of basic support $\tau$-tilting $\La$-modules give chambers of the associated root system.
\end{itemize}
\end{thm}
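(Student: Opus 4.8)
The plan is to reduce the whole statement to the single identity $G_{I_w}=\sigma^*(w)$ in $GL_n(\mathbb{Z})$ for every $w\in W_Q$, where $G_{I_w}$ is the $g$-matrix of the support $\tau$-tilting module $I_w$ (Definition \ref{def g-vector}). Granting this, part (a) is immediate from Theorem \ref{2}: the map $w\mapsto I_w$ is a bijection $W_Q\to\sttilt\La$ and every basic support $\tau$-tilting $\La$-module is some $I_w$, so the set of $g$-matrices of support $\tau$-tilting modules equals $\{G_{I_w}\mid w\in W_Q\}=\{\sigma^*(w)\mid w\in W_Q\}=\sigma^*(W_Q)$. Part (b) is equally immediate, since by Definition \ref{def g-vector} the cone $C(w)$ is the set of $\mathbb{R}_{>0}$-combinations of the columns of $G_{I_w}=\sigma^*(w)$, and because $\sigma^*(w)$ is an invertible linear map carrying the canonical basis $\mathbf{e}_1,\dots,\mathbf{e}_n$ to those columns, this set is exactly $\sigma^*(w)(C_0)$; the final assertion of (b) then follows from the standard fact that $\{\sigma^*(w)(C_0)\mid w\in W_Q\}$ is the set of chambers of the root system of $Q$.

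To prove $G_{I_w}=\sigma^*(w)$ I would induct on the length $\ell(w)$. For $\ell(w)=0$ we have $I_e=\La=\bigoplus_j\La e_j$, whose $g$-matrix is the identity $=\sigma^*(e)$. The length-one case is the prototype of the inductive step, and it is where the module structure of the ideals enters. Since $\La e_j\subseteq I_i$ for $j\neq i$ and $\La/I_i\cong S_i$, one has $I_i\cong\rad(\La e_i)\oplus\bigoplus_{j\neq i}\La e_j$; the summands $\La e_j$ $(j\neq i)$ are projective with $g$-vector $\mathbf{e}_j$, and a minimal projective presentation of $\rad(\La e_i)$ has the form $\La e_i\to\bigoplus_{j\sim i}\La e_j\to\rad(\La e_i)\to 0$, where $j\sim i$ means that $j$ is a neighbour of $i$ in $Q$ and the degree-one term $\La e_i$ is forced by the defining relation of $\La$ at the vertex $i$. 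Hence $\rad(\La e_i)$ has $g$-vector $\sum_{j\sim i}\mathbf{e}_j-\mathbf{e}_i$; placing $\rad(\La e_i)$ in the $i$-th position (it is the summand that replaces $\La e_i$ under the mutation $\mu_i(\La)=I_i$, which is a support $\tau$-tilting module by Lemma \ref{I_i}) and comparing with $\sigma^*(s_i)(\mathbf{e}_i)=\sum_{j\sim i}\mathbf{e}_j-\mathbf{e}_i$ and $\sigma^*(s_i)(\mathbf{e}_j)=\mathbf{e}_j$ $(j\neq i)$ gives $G_{I_{s_i}}=\sigma^*(s_i)$.

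For the inductive step, write $w=w's_i$ with $\ell(w)=\ell(w')+1$, so that (by the proof of Theorem \ref{2}) $I_w$ is the mutation of $I_{w'}$ at the vertex $i$, and $I_w=I_{w'}I_i$ as ideals. The key point is that the exchange sequence of this mutation has the rigid shape prescribed by the Cartan matrix of $Q$: the middle term is the direct sum of the indecomposable summands of $I_{w'}$ occupying the positions $j$ with $j\sim i$, independently of $w'$. Feeding this into the rule of Adachi--Iyama--Reiten \cite{AIR} for the change of $g$-vectors under mutation, the $i$-th column of $G_{I_w}$ becomes $-g+\sum_{j\sim i}g_j$ and the other columns are unchanged, where $g$ and the $g_j$ are the corresponding columns of $G_{I_{w'}}$; this is precisely right multiplication of $G_{I_{w'}}$ by $\sigma^*(s_i)$. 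Combined with the inductive hypothesis $G_{I_{w'}}=\sigma^*(w')$ and the homomorphism property $\sigma^*(w')\sigma^*(s_i)=\sigma^*(w's_i)$, we obtain $G_{I_w}=\sigma^*(w)$; equivalently, one can phrase the step functorially, checking that $I_i\otimes_\La-$ sends the minimal projective presentations of the summands of $I_{w'}$ to those of the summands of $I_w$ and that $I_i\otimes_\La\La e_j$ has $g$-vector the $j$-th column of $\sigma^*(s_i)$. I expect the main obstacle to be exactly this step --- showing that the exchange sequences occurring along the mutations from $I_{w'}$ to $I_w$ always have the Cartan-matrix shape, equivalently that the relevant $\Tor_1^\La(I_i,-)$ vanishes and minimality of presentations is preserved. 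This is where the special homological behaviour of the ideals $I_i$ over a preprojective algebra must be used, while everything else is bookkeeping with Theorem \ref{2}, the elementary structure of $I_i$, and standard facts about the contragredient geometric representation.
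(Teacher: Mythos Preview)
Your reduction to the single identity $g(w)=\sigma^*(w)$ and the deduction of (a) and (b) from it are exactly right; that is also how the paper concludes. The argument you propose for the identity itself, however, is genuinely different from the one in the paper.

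The paper does \emph{not} induct on $\ell(w)$. Instead it passes to the extended Dynkin quiver $\widetilde Q$ and the (infinite-dimensional) preprojective algebra $\widetilde\Lambda$, where $\widetilde I_w$ is a tilting module of projective dimension $\le 1$. There the identity $\widetilde g(w)=\widetilde\sigma^*(w)$ is already known from \cite[Theorem 6.6]{IR}. The work in the paper (Propositions~\ref{minimal} and~\ref{[Iw]=[Iw]}) consists in showing that applying $-\otimes_{\widetilde\Lambda}\Lambda$ to a minimal projective presentation of $e_i\widetilde I_w$ either produces a minimal presentation of $e_iI_w$ (when $e_iI_w\neq0$) or collapses to the expected negative unit vector (when $e_iI_w=0$), so that $g(w)$ is the projection of $\widetilde g(w)$ forgetting the $0$-th coordinate. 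Since no $s_0$ occurs in $w$, the same projection sends $\widetilde\sigma^*(w)$ to $\sigma^*(w)$, and the identity follows.

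Your inductive route is viable, but note two points. First, with the paper's conventions one has $I_w=I_{i_1}\cdots I_{i_k}$ and $\sigma^*(w)=\sigma^*_{i_k}\cdots\sigma^*_{i_1}$, and the mutation proved in Lemma~\ref{left mut} is $\mu^-_{e_iT}(T)=I_iT$, i.e.\ \emph{left} multiplication by $I_i$. So the correct decomposition is $w=s_iw'$ with $\ell(w)=\ell(w')+1$; then $I_w=I_iI_{w'}$ is the left mutation of $I_{w'}$, and $\sigma^*(w)=\sigma^*(w')\,\sigma^*_i$ is right multiplication by $\sigma^*_i$, matching your column formula. Your expressions $w=w's_i$, $I_w=I_{w'}I_i$ and $\sigma^*(w')\sigma^*(s_i)=\sigma^*(w's_i)$ are inconsistent with these conventions (and with each other). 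Second, the ``Cartan shape'' of the exchange sequence you single out as the main obstacle is exactly Lemma~\ref{approx} together with the minimality argument in the proof of Proposition~\ref{TI}; once that is in hand, the $g$-vector relation $g^i(s_iw')=\sum_{j\sim i}g^j(w')-g^i(w')$ follows from the compatibility of support $\tau$-tilting mutation with two-term silting mutation in \cite{AIR}, since silting mutation is by triangles and hence additive on classes in $K_0$. Compared with the paper's lifting argument, your approach stays entirely inside the Dynkin setting and is more elementary, at the cost of reproving in this special case a piece of \cite{IR}; the paper's approach is shorter because it outsources the inductive computation to the non-Dynkin tilting theory.
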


The above results give several important applications. 
Among others we conclude that there are only finitely many basic support $\tau$-tilting $\La$-modules. 
This fact yields bijections between basic support $\tau$-tilting $\La$-modules, torsion classes in $\mod\La$ and torsion-free classes in $\mod\La$ by a result in \cite{IJ}. Then we can give complete descriptions of these two classes in terms of $W_Q$ (Proposition \ref{bij torsion}). 
This seems to be interesting by itself since almost all preprojective algebras are of wild representation type.

As another application, by combining with results of \cite{AIR}, \cite{KY,BY} and \cite{ORT}, we extend bijections of Theorem \ref{2} as follows. 

\begin{cor}[Corollary \ref{further bijections}]\label{intro}
Objects of Theorem \ref{2} are in bijection with the following objects.
\begin{itemize}
\item[(e)]The set of torsion classes in $\mod\La$.
\item[(f)]The set of torsion-free classes in $\mod\La$.
\item[(g)]The set of isomorphism classes of basic two-term silting complexes in $\KKb(\proj\La)$.
\item[(h)]The set of intermediate bounded co-$t$-structures in $\KKb(\proj\La)$ with respect to the standard co-$t$-structure.
\item[(i)]The set of intermediate bounded $t$-structures in $\DD^{\rm b}(\mod\La)$ with length heart with respect to the standard $t$-structure. 
\item[(j)]The set of isomorphism classes of two-term simple-minded collections in $\DD^{\rm b}(\mod\La)$.
\item[(k)]The set of quotient closed subcategories in $\mod KQ$.
\item[(l)]The set of subclosed subcategories in $\mod KQ$.
\end{itemize}
\end{cor}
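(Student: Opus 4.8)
The plan is to build, around the support $\tau$-tilting modules in (c), a chain of bijections that are already available in the literature, and to observe that all of their hypotheses are met once one knows that $\sttilt\La$ is finite (Theorem \ref{2}) and that $\La$ is a finite-dimensional selfinjective algebra.

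I would begin with (c) $\leftrightarrow$ (e) and (d) $\leftrightarrow$ (f). By \cite{AIR}, for any finite-dimensional algebra the basic support $\tau$-tilting modules are in bijection with the \emph{functorially finite} torsion classes; since $\sttilt\La$ is finite by Theorem \ref{2}, a result of \cite{IJ} promotes this to a bijection with \emph{all} torsion classes in $\mod\La$, which is exactly the content of Proposition \ref{bij torsion} and gives (c) $\leftrightarrow$ (e). For (f), I would note that $\La^{\op}$ is the preprojective algebra of the opposite quiver, hence again of Dynkin type, so the same discussion applies to $\La^{\op}$; composing with the standard $K$-duality $\mod\La^{\op}\simeq(\mod\La)^{\op}$, which exchanges torsion classes and torsion-free classes, then yields (d) $\leftrightarrow$ (f).

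Next I would treat (g)--(j). Once more by \cite{AIR}, $\sttilt\La$ is in bijection with the basic two-term silting complexes in $\KKb(\proj\La)$, giving (c) $\leftrightarrow$ (g); and the further bijections (g) $\leftrightarrow$ (h) $\leftrightarrow$ (i) $\leftrightarrow$ (j) are the specializations to the two-term / intermediate range of the general correspondences of \cite{KY} and \cite{BY} among silting complexes, bounded co-$t$-structures, bounded $t$-structures with length heart, and simple-minded collections, all taken relative to the standard (co-)$t$-structure. Finally, (e) $\leftrightarrow$ (k) and (f) $\leftrightarrow$ (l) should be read off directly from \cite{ORT}, where torsion classes (resp.\ torsion-free classes) in $\mod\La$ for $Q$ Dynkin are put in bijection with quotient closed (resp.\ subclosed) subcategories of $\mod KQ$.

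I do not expect a genuinely hard step here: the argument is essentially the careful invocation of each cited equivalence together with a check of its hypotheses. The one indispensable new input is the finiteness of $\sttilt\La$ from Theorem \ref{2}, which is precisely what converts the functorially-finite statements of \cite{AIR} into the unrestricted statements (e), (f) and thereby makes the results of \cite{ORT} applicable. The points that will require the most care are purely bookkeeping: handling $\La^{\op}$ correctly in (d) and (f), and verifying that the ``two-term'' restriction on complexes and collections matches the ``intermediate'' range of (co-)$t$-structures on both sides of \cite{KY,BY}.
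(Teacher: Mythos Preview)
Your proposal is correct and matches the paper's approach: \cite{AIR} together with \cite{IJ} for (e) and (f), \cite{AIR} together with \cite{KY,BY} for (g)--(j), and \cite{ORT} for (k) and (l). The only imprecision is that \cite{ORT} parametrizes quotient closed (resp.\ subclosed) subcategories of $\mod KQ$ by $W_Q$ via $w\mapsto(I_w)_{KQ}$ (resp.\ $w\mapsto(\La/I_w)_{KQ}$), not directly by torsion classes in $\mod\La$; the paper obtains the link to (e) and (f) by combining this with Proposition~\ref{bij torsion} and the identity $(\Fac I_w)_{KQ}=(I_w)_{KQ}$, so your claim that \cite{ORT} already states (e)$\leftrightarrow$(k) is a slight overstatement, though the composite bijection is of course the same.
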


We now describe the organization of this paper.

In section \ref{preli}, we recall definitions of preprojective algebras, support $\tau$-tilting modules and torsion classes.

In section \ref{spt ideal}, we introduce support $\tau$-tilting ideals (Definition \ref{def spt}), which play key roles in this paper. 
Moreover, we prove that any element of $\langle I_1,\ldots,I_n\rangle$ is a basic support $\tau$-tilting ideal (Theorem \ref{main1}). 
In section \ref{bijection with the Weyl group}, we prove that elements of $W_Q$ correspond bijectively with support $\tau$-tilting modules (Theorem \ref{number}). 
Using this, we give a description of mutations of support $\tau$-tilting modules in terms of the Weyl group $W_Q$.  
We also determine the annihilators of all support $\tau$-tilting modules. 
In section \ref{Partial orders of}, 
we give an (anti)isomorphism between a partial order of $W_Q$ and that of $\sttilt\La$ (Theorem \ref{main3}). 

In section \ref{Cone of $g$-vectors}, we deal with $g$-matrices of support $\tau$-tilting modules. We give an explicit description of them in terms of reflection transformations. Furthermore, we study cones of support $\tau$-tilting modules and show that they coincide with chambers of the associated root system.

In section \ref{further}, we show Corollary \ref{intro}. In particular, we give complete descriptions of torsion classes and torsion-free classes (Proposition  \ref{bij torsion}). 
Furthermore, we explain a relationship between our results and other works.
\\

\textbf{Notations.}
Let $K$ be an algebraically closed field and we denote by $D:=\Hom_K(-,K)$. 
By a finite dimensional algebra $\La$, we mean a basic finite dimensional algebra over $K$. 
By a module, we mean a right module unless stated otherwise. 
We denote by $\mod\Lambda$ the category of finitely generated $\Lambda$-modules and by $\proj\La$ the category of finitely generated projective $\La$-modules. The composition $gf$ means first $f$, then $g$. We denote by $Q_0$ the set of vertices and by $Q_1$ the set of arrows of a quiver $Q$. For an arrow $a\in Q_1$, we denote by $s(a)$ and $e(a)$ the start and end vertices of $a$ respectively. 
For $X\in\mod\La$, we denote by $\Sub X$ (respectively, $\Fac X$) the subcategory of $\mod\La$ whose objects are submodules (respectively, factor modules) of finite direct sums of copies of $X$. 
We denote by $\add M$ the subcategory of $\mod\Lambda$ consisting of direct summands of finite direct sums of copies of $M\in\mod\La$. 
\\ 

\textbf{Acknowledgement.}
First and foremost, the author would like to thank Osamu Iyama for his support and patient guidance. He would like to express his gratitude to Steffen Oppermann, who kindly explain results of his paper.
He is grateful to Joseph Grant, Laurent Demonet and Dong Yang for answering  
questions and helpful comments. He thanks Kota Yamaura, Takahide Adachi and Gustavo Jasso for their help and stimulating discussions. The author is very grateful to the anonymous referee for valuable comments, especially for suggesting the better terms and sentences.

\section{Preliminaries}\label{preli}

\subsection{Preprojective algebras}
In this subsection, we recall definitions and some properties of preprojective algebras. 
We refer to \cite{BBK,Ri} for basic properties and background information. 

\begin{defi}\label{preprojective}
Let $Q$ be a finite connected acyclic quiver with vertices $Q_0=\{1,\ldots, n\}$. 
The preprojective algebra associated to $Q$ is the algebra
$$\La=K\overline{Q}/\langle \sum_{a\in Q_1} (aa^*-a^*a)\rangle$$
where $\overline{Q}$ is the double quiver of $Q$, which is obtained from $Q$ by adding for each arrow 
$a:i\rightarrow j$ in $Q_1$ an arrow $a^*:i\leftarrow j$ pointing in the opposite direction. 
\end{defi}

If $Q$ is a non-Dynkin quiver, 
the bounded derived category $\DD^{\rm b}(\fd\Lambda)$
of the category $\fd\La$ of finite dimensional modules are \emph{2-Calabi-Yau} (2-CY for short), that is, there is a functorial isomorphism  $$D{\Hom}_{\DD^{\rm b}(\fd\Lambda)}(X,Y)\cong{\Hom}_{\DD^{\rm b}(\fd\Lambda)}(Y,X[2]).$$
We refer to \cite{AR2,Boc,CB} and see \cite{GLS2} for the detailed proofs. 
On the other hand, the algebra $\Lambda$ is finite-dimensional selfinjective if $Q$ is a Dynkin quiver. 


In this paper, we use the two-sided ideal $I_i$ of $\La$ generated by $1-e_i$, where $e_i$ is a primitive idempotent of $\La$ for $i\in Q_0$. It is easy to see that the ideal has the following property, where we denote by $S_i:=\La/I_i$.
\begin{lemm}\label{str of I}
Let $Q$ be a finite connected acyclic quiver, $\La$ the preprojective algebra of $Q$ and $X\in\mod\La$.
Then $XI_i$ is maximal amongst submodules $Y$ of $X$ such that any composition factor of $X/Y$ is isomorphic to a simple module $S_i$.
\end{lemm}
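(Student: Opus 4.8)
The plan is to isolate one algebraic fact about the ideal $I_i$ and then deduce both halves of the lemma from it. Concretely, I would first prove that the following three conditions on a module $M\in\mod\La$ are equivalent: (1) every composition factor of $M$ is isomorphic to $S_i$; (2) $M(1-e_i)=0$; (3) $MI_i=0$. Granting this, the statement follows at once. On the one hand, $(X/XI_i)I_i=0$ holds by the very definition of the submodule $XI_i$, so $X/XI_i$ satisfies (3), hence (1); thus $XI_i$ is one of the submodules $Y$ in question. On the other hand, if $Y\subseteq X$ is any submodule such that every composition factor of $X/Y$ is isomorphic to $S_i$, then $X/Y$ satisfies (1), hence (3), that is, $(X/Y)I_i=0$, which says precisely that $XI_i\subseteq Y$. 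Hence $XI_i$ is contained in every such $Y$ while being one of them; it is therefore the smallest submodule of $X$ with the stated property, equivalently $X/XI_i$ is the largest factor module of $X$ all of whose composition factors are isomorphic to $S_i$ — which is the content of the lemma.

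To establish the equivalence I would use the explicit description $I_i=\La(1-e_i)\La$. The equivalence of (2) and (3) is purely formal: since $1-e_i\in I_i$ we have $M(1-e_i)\subseteq MI_i$, and conversely every element of $MI_i$ is a sum of terms $m\big(\lambda(1-e_i)\mu\big)=(m\lambda)(1-e_i)\mu$ with $m\in M$ and $\lambda,\mu\in\La$, so (2) forces (3). For the equivalence of (1) and (2), recall that $S_i$ is the simple $\La$-module at vertex $i$, so $S_ie_j=0$ for all $j\ne i$. Fixing $j\ne i$ and a composition series $0=M_0\subset M_1\subset\cdots\subset M_k=M$, exactness of the functor $(-)\,e_j$ gives $M_\ell e_j/M_{\ell-1}e_j\cong(M_\ell/M_{\ell-1})e_j$ for each $\ell$, and the right-hand side vanishes unless $M_\ell/M_{\ell-1}\cong S_j$. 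Consequently, if every composition factor of $M$ is isomorphic to $S_i$ then $Me_j=0$ for every $j\ne i$, so $M(1-e_i)=M\big(\textstyle\sum_{j\ne i}e_j\big)=0$; conversely, if $M(1-e_i)=0$ then $Me_j=0$ for all $j\ne i$, so no simple subquotient of $M$ is of the form $S_j$ with $j\ne i$, forcing every composition factor to be isomorphic to $S_i$.

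I do not anticipate a genuine obstacle: the lemma is essentially a reformulation of the fact that $I_i$ is the two-sided ideal generated by the idempotent $1-e_i$, so that $\La/I_i\cong S_i$ is semisimple; the one step that really uses this, rather than being bookkeeping, is the implication (1)$\Rightarrow$(3), which would fail for an arbitrary two-sided ideal. The only other things to watch are presentational: ``maximal'' in the statement is to be read for the factor module $X/XI_i$ (equivalently, $XI_i$ is the \emph{minimal} submodule in question), and the phrase ``composition factor'' is unambiguous once one works with finite-length modules — which is automatic in the Dynkin setting on which the paper focuses, and in general one may take (2), or (3), as the working definition of (1).
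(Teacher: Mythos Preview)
Your argument is correct. The paper does not actually give a proof of this lemma; it merely introduces it with ``It is easy to see that the ideal has the following property'' and states the result without justification. Your proof via the equivalence $MI_i=0\iff M(1-e_i)=0\iff$ every composition factor of $M$ is $S_i$ is exactly the standard verification one would supply, and the implication $(1)\Rightarrow(3)$ that you single out as the only non-formal step is indeed the heart of the matter.

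Your side remark about the word ``maximal'' is also on point: as literally stated, ``$XI_i$ is maximal amongst submodules $Y$ with $X/Y$ having only $S_i$ as composition factors'' would be trivially satisfied by $Y=X$, so the intended reading---and the one consistent with how the lemma is used later (e.g.\ in the proof of Corollary~\ref{tensor})---is precisely what you wrote: $X/XI_i$ is the largest quotient of $X$ all of whose composition factors are $S_i$, equivalently $XI_i$ is the smallest such submodule.
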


\subsection{Support $\tau$-tilting modules}\label{Support tau tilting modules}
We recall the definition of support $\tau$-tilting modules.  
We refer to \cite{AIR} for the details about support $\tau$-tilting modules.

Let $\La$ be a finite dimensional algebra and $\tau$ denote the AR translation \cite{ARS}.

\begin{defi}\label{spt}
\begin{itemize}
\item[(a)] We call $X$ in $\mod\Lambda$ \emph{$\tau$-rigid} if $\Hom_{\Lambda}(X,\tau X)=0$.
\item[(b)] We call $X$ in $\mod\Lambda$ \emph{$\tau$-tilting} (respectively, \emph{almost complete $\tau$-tilting}) if $X$ is $\tau$-rigid and $|X|=|\Lambda|$ (respectively, $|X|=|\Lambda|-1$), where $|X|$ denotes the number of non-isomorphic indecomposable direct summands of $X$.

\item[(c)] We call $X$ in $\mod\Lambda$ \emph{support $\tau$-tilting} if there exists an idempotent $e$ of $\La$ such that $X$ is a $\tau$-tilting $(\La/\langle e\rangle)$-module.
\item[(d)] We call a pair $(X,P)$ of $X\in\mod\La$ and $P\in\proj\Lambda$ 
 \emph{$\tau$-rigid} if $X$ is $\tau$-rigid and $\Hom_\Lambda(P,X)=0$.
\item[(e)] We call a $\tau$-rigid pair $(X,P)$ a \emph{support $\tau$-tilting} (respectively, \emph{almost complete support $\tau$-tilting}) pair if $|X|+|P|=|\Lambda|$ (respectively, $|X|+|P|=|\Lambda|-1$). 
\end{itemize}
\end{defi}

We call $(X,P)$ \emph{basic} if $X$ and $P$ are basic, and we say that $(X,P)$ is a \emph{direct summand} of $(X',P')$ if $X$ is a direct summand of $X'$ and $P$ is a direct summand of $P'$. 
Note that $(X,P)$ is a $\tau$-rigid (respectively, support $\tau$-tilting) pair for $\Lambda$ if and only if
$X$ is a $\tau$-rigid (respectively, $\tau$-tilting) $(\Lambda/\langle e\rangle)$-module, where $e$ is an idempotent of $\Lambda$ such that $\add P=\add e\Lambda $ \cite[Proposition 2.3]{AIR}. Moreover, if $(X,P)$ and $(X,P')$ are support $\tau$-tilting pairs for $\La$, then we get $\add P=\add P'$. Thus, a basic support $\tau$-tilting module $X$ determines a basic support $\tau$-tilting pair $(X,P)$ uniquely and we can identify basic support $\tau$-tilting modules with basic support $\tau$-tilting pairs. 

We denote by $\sttilt\La$ the set of isomorphism classes of basic support $\tau$-tilting $\La$-modules.

Next we recall some properties of support $\tau$-tilting modules. 
The set of support $\tau$-tilting modules has a natural partial order as follows.

\begin{defi}\label{def partial}
Let $\La$ be a finite dimensional algebra. 
For $T,T'\in\sttilt\Lambda$, we write
\[T'\geq T\]
if $\Fac T' \supset \Fac T$. 
Then $\geq$ gives a partial order on $\sttilt\Lambda$ \cite[Theorem 2.18]{AIR}. 
\end{defi}

Then we give the following results, which play significant roles in this paper. 

\begin{definition-theorem}\cite[Theorem 2.18, 2.28 and 2.30]{AIR}\label{air mutation}
Let $\La$ be a finite dimensional algebra. Then
\begin{itemize}
\item[(i)] any basic almost support $\tau$-tilting pair $(U,Q)$ is a direct summand of exactly two basic support $\tau$-tilting pairs $(T,P)$ and $(T',P')$. Moreover, we have $T> T'$ or $T< T'$.
\end{itemize}

Under the above setting, let $X$ be an indecomposable $\La$-module satisfying either $T=U\oplus X$ or $P=Q\oplus X$. 
We write $(T',P')=\mu_{(X,0)}(T,P)$ if $X$ is a direct summand of $T$ and $(T',P')=\mu_{(0,X)}(T,P)$ if $X$ is a direct summand of $P$, and we say that $(T',P')$ is a \emph{mutation} of $(T,P)$. In particular, we say that $(T',P')$ is a \emph{left mutation} (respectively, \emph{right mutation}) of $(T,P)$ if $T>T'$ (respectively, if $T<T'$) and write $\mu^-$ (respectively, $\mu^+$). By (i), exactly one of the left mutation or right mutation occurs.  

Now, assume that $X$ is a direct summand of $T$ and $T=U\oplus X$. In this case, for simplicity,  
we write a left mutation $T'=\mu_X^-(T)$ and a right mutation $T'=\mu_X^+(T)$.

Then, the following conditions are equivalent.
\begin{itemize}
\item[(ii)]
\begin{itemize}
\item[(a)] $T>T'$ (i.e. $T'=\mu_X^-(T)$).
\item[(b)] $X\notin\Fac U$.
\end{itemize}
\end{itemize}


Furthermore, if $T$ is a $\tau$-tilting $\La$-module and (ii) holds, then we obtain the following result.
\begin{itemize}
\item[(iii)]
Let $X\overset{f}{\to} U'\to Y\to 0$ be an exact sequence, where $f$ is a minimal left $(\add U)$-approximation.
Then one of the following holds.
\begin{itemize}
\item[(a)] $Y=0$ and $\mu_X^-(T)\cong U$ is a basic support $\tau$-tilting $\Lambda$-module. 
\item[(b)] $Y\neq0$ and $Y$ is a direct sum of copies of an indecomposable $\Lambda$-module $Y_1$, which is not in $\add T$, and $\mu_X^-(T)\cong Y_1\oplus U$ is a basic $\tau$-tilting $\Lambda$-module.
\end{itemize}
\end{itemize}
\end{definition-theorem}

Finally, we define the following useful quiver.

\begin{defi}
Let $\La$ be a finite dimensional algebra.
We define the \emph{support $\tau$-tilting quiver} $\HH(\sttilt\La)$ as follows.

$\bullet$ The set of vertices is $\sttilt\La$.

$\bullet$ Draw an arrow from $T$ to $T'$ if $T'$ is a left mutation of $T$ (i.e. 
 $T'=\mu_X^-(T)$).
\end{defi}

The following theorem relates $\HH(\sttilt\La)$ with partially orders of $\sttilt\La$.

\begin{theorem}\label{Hasse-mut}\cite[Corollary 2.34]{AIR}
The support $\tau$-tilting quiver $\HH(\sttilt\La)$ is the Hasse quiver of the partially ordered set $\sttilt\La$.
\end{theorem}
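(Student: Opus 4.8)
To prove the statement we must show that, for $T,T'\in\sttilt\La$, there is an arrow $T\to T'$ in $\HH(\sttilt\La)$ if and only if $T$ covers $T'$ in the partial order of Definition~\ref{def partial}. Unwinding the definitions, this amounts to two implications: \textbf{(1)} every left mutation $\mu_X^-(T)$ is covered by $T$; and \textbf{(2)} every $T'\in\sttilt\La$ covered by $T$ equals $\mu_X^-(T)$ for some indecomposable direct summand $X$ of $T$.

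For \textbf{(1)}, write $T=U\oplus X$ and $T'=\mu_X^-(T)$, so that $(U,P)$ is an almost complete support $\tau$-tilting pair which, by Definition-Theorem~\ref{air mutation}(i), is a direct summand of exactly two basic support $\tau$-tilting pairs, namely $(T,P)$ and $(T',P')$; by definition $T>T'$. Assume $T>W>T'$ for some $W$ with pair $(W,P_W)$; I will contradict \ref{air mutation}(i) by showing $(U,P)$ is a direct summand of $(W,P_W)$. From \ref{air mutation}(iii) one gets $\Fac T'=\Fac U$ (in case (iii)(b) the new summand $Y_1$ is a quotient of the approximation term $U'\in\add U$, hence lies in $\Fac U$). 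Thus $W>T'$ gives $U\in\Fac T'\subseteq\Fac W$; moreover the indecomposable summands of $U$ are summands of $T$, hence $\Ext^1$-projective in the torsion class $\Fac T$, hence $\Ext^1$-projective in the smaller class $\Fac W$, so being already objects of $\Fac W$ they are direct summands of $W$ (here I invoke the correspondence of \cite{AIR} between support $\tau$-tilting modules and functorially finite torsion classes, under which indecomposable summands correspond to indecomposable $\Ext^1$-projectives). For the projective part, $W\in\Fac T$ together with projectivity of $P$ and $\Hom_\La(P,T)=0$ forces $\Hom_\La(P,W)=0$, and since a $\tau$-tilting module is sincere over its defining quotient this gives $P\in\add P_W$. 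Hence $(U,P)\mid(W,P_W)$, so $W\in\{T,T'\}$, contradicting $T>W>T'$.

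For \textbf{(2)}, the crux is the following lemma: if $T>W$ in $\sttilt\La$, then there is an indecomposable summand $X$ of $T$ with $X\notin\Fac(T/X)$ and $\Fac(T/X)\supseteq\Fac W$. Granting this, \ref{air mutation}(ii) shows $\mu_X^-(T)$ is a left mutation of $T$, and from \ref{air mutation}(iii) one computes $\Fac\mu_X^-(T)=\Fac(T/X)$, so $T>\mu_X^-(T)\geq W$; if $T$ covers $W$ then necessarily $\mu_X^-(T)=W$, giving the desired arrow $T\to W$. The lemma itself is proved by first noting that some indecomposable summand of $T$ lies outside $\Fac W$ (otherwise $T\in\Fac W$, forcing $\Fac T\subseteq\Fac W$) and then choosing such a summand $X$ for which the remaining torsion class $\Fac(T/X)$ still contains $\Fac W$; verifying that a suitable choice exists uses the explicit description of the torsion classes $\Fac(T/X)$ and, again, the characterization of the summands of a support $\tau$-tilting module as the indecomposable $\Ext^1$-projectives of its torsion class.

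I expect the main obstacle to be the lemma in step \textbf{(2)}: showing that the ``one-step'' torsion classes $\Fac(T/X)$, as $X$ runs over the indecomposable summands of $T$ with $X\notin\Fac(T/X)$, already dominate every support $\tau$-tilting module strictly below $T$. This is precisely where the torsion-theoretic input of \cite{AIR} (functorial finiteness, the bijection $T\mapsto\Fac T$, and the computation of $\Fac\mu_X^-(T)$) is indispensable, rather than merely formal properties of the poset; granted the lemma, the theorem follows by combining it with the ``exactly two completions'' statement \ref{air mutation}(i) as in step \textbf{(1)}.
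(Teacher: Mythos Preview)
The paper does not prove this theorem at all: it is quoted verbatim from \cite[Corollary 2.34]{AIR} and used as a black box. So there is no ``paper's own proof'' to compare against; what you have written is an attempted reconstruction of the argument in \cite{AIR}.

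Your step \textbf{(1)} is essentially correct. The equality $\Fac T'=\Fac U$ does follow from Definition-Theorem~\ref{air mutation}(iii) (after the standard reduction to a genuine $\tau$-tilting module over $\La/\langle e\rangle$, since (iii) as stated assumes $T$ is $\tau$-tilting), and the Ext-projectivity argument showing $(U,P)\mid(W,P_W)$ is sound.

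Your step \textbf{(2)}, however, is not a proof but a proof plan with the hard part explicitly left open. The lemma you isolate---that for $T>W$ one can pick an indecomposable summand $X\notin\Fac(T/X)$ with $\Fac(T/X)\supseteq\Fac W$---is exactly the substantive content of \cite[Theorem 2.33]{AIR}, and your justification (``choosing such a summand $X$ for which the remaining torsion class $\Fac(T/X)$ still contains $\Fac W$; verifying that a suitable choice exists uses the explicit description\ldots'') does not supply the mechanism. It is not enough to take any summand $X\notin\Fac W$: you must argue why at least one such $X$ also satisfies $W\in\Fac(T/X)$, and this does not follow from the generalities you cite. In \cite{AIR} this step is handled by a concrete construction (via the minimal left $\Fac W$-approximation of $T$ and an analysis of which summands survive), not by an existence argument of the kind you sketch. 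As written, your proposal for \textbf{(2)} is a correct identification of what must be shown, but not a proof of it.
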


\subsection{Torsion classes}
The notion of torsion classes has been extensively studied in the representation theory.  As we will see below, support $\tau$-tilting modules have a close relationship with torsion classes.  

\begin{defi}Let $\La$ be a finite dimensional algebra and $\TT$ a full subcategory in $\mod\Lambda$.
\begin{itemize}
\item[(a)]We call $\TT$ \emph{torsion class} (respectively, \emph{torsion-free class}) if it is closed under factor modules (respectively, submodules) and extensions.

\item[(b)]We call $\TT$ a \emph{contravariantly finite subcategory} in $\mod\La$ if for each $X\in\mod\La$, there is a map $f:T\to X$ with $T\in\TT$ such that $\Hom_\La(T',T)\overset{\cdot f}{\to}\Hom_\La(T',X)$ is surjective for all $T'\in\TT$.
Dually, a \emph{covariantly finite subcategory} is defined. 
We call $\TT$ a \emph{functorially finite subcategory} if it is contravariantly and covariantly finite.
\end{itemize}
\end{defi}

We denote by $\fftors\Lambda$ the set of functorially finite torsion classes in $\mod\Lambda$.

Then, we have the following result, where we denote by $P(\TT)$ the direct sum of one copy of each of the indecomposable \emph{Ext-projective} objects in $\TT$ (i.e. $\{X\in\TT\ |\ \Ext_\La^1(X,\TT)=0\}$) up to isomorphism. 

\begin{theorem}\cite{AIR}\label{fftor-spt}
For a finite dimensional algebra $\La$, there are mutually inverse bijections 
\[\sttilt\Lambda\longleftrightarrow\fftors\Lambda\]
given by $\sttilt\La\ni T\mapsto\Fac T\in\fftors\Lambda$ and $\fftors\Lambda\ni\TT\mapsto P(\TT)\in\sttilt\Lambda$.
\end{theorem}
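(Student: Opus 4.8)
The plan is to construct the two assignments, verify each is well defined, and then check they are mutually inverse. The external inputs I use are: Auslander--Reiten duality in the form $D\Ext^1_\La(X,Y)\cong\overline{\Hom}_\La(Y,\tau X)$, where $\overline{\Hom}$ denotes morphisms modulo those factoring through an injective; the theorem of Auslander--Smalø that a functorially finite torsion class has enough Ext-projectives, together with the attendant characterization $\Hom_\La(M,\tau M)=0\iff\Ext^1_\La(M,\Fac M)=0$; and a Bongartz-type completion of $\tau$-rigid pairs to support $\tau$-tilting pairs.

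First I would check that $\Fac T$ is a functorially finite torsion class for a support $\tau$-tilting module $T$. Closure under factor modules is immediate, and for closure under extensions it suffices to show $\Ext^1_\La(T,\Fac T)=0$: given $Y\in\Fac T$ with an epimorphism $p\colon T^{m}\twoheadrightarrow Y$, any $f\colon Y\to\tau T$ satisfies $fp=0$ since $\Hom_\La(T,\tau T)=0$, hence $f=0$ as $p$ is epic, so $\Hom_\La(Y,\tau T)=0$ and thus $\Ext^1_\La(T,Y)=0$ by the duality. Therefore $\Fac T$ is the smallest torsion class containing $\add T$; it is contravariantly finite because the inclusion of the $T$-trace $\operatorname{tr}_T(X)\hookrightarrow X$ is a right $\Fac T$-approximation, and covariant finiteness is part of the basic theory of $\tau$-rigid modules. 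The same computation shows that $T$ is Ext-projective in $\Fac T$, so $\add T\subseteq\add P(\Fac T)$.

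Next I would check that $P(\TT)$ is support $\tau$-tilting for a functorially finite torsion class $\TT$. Since $\TT$ has enough Ext-projectives, every object of $\TT$ is an epimorphic image of a finite direct sum of indecomposable Ext-projectives; in particular $\Fac P(\TT)=\TT$ and $\Ext^1_\La(P(\TT),\TT)=0$, whence $\Ext^1_\La(P(\TT),\Fac P(\TT))=0$ and so $\Hom_\La(P(\TT),\tau P(\TT))=0$ by the characterization above. Thus $P(\TT)$ is $\tau$-rigid. To upgrade this to a support $\tau$-tilting \emph{pair}, set $Q:=\bigoplus\{\,P_i\mid\Hom_\La(P_i,\TT)=0\,\}$; then $(P(\TT),Q)$ is a $\tau$-rigid pair (as $P(\TT)\in\TT$), and what must be proved is the numerical identity $|P(\TT)|+|Q|=|\La|$.

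I would obtain this identity, and mutual inverseness, as follows. Complete the $\tau$-rigid pair $(P(\TT),Q)$ by a Bongartz-type construction to a support $\tau$-tilting pair $(M,P)$ with $\add P(\TT)\subseteq\add M$, $\add Q\subseteq\add P$ and $\Fac M\supseteq\Fac P(\TT)=\TT$. The module $M$, being $\tau$-tilting over its support algebra, is Ext-projective in $\Fac M$, while $P(\TT)$ already exhausts the indecomposable Ext-projectives of $\TT=\Fac P(\TT)$; combined with the fact that a $\tau$-rigid module has at most $|\La|$ indecomposable summands, this forces $\add M=\add P(\TT)$, $\Fac M=\TT$ and $P=Q$. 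Hence $(P(\TT),Q)$ is support $\tau$-tilting and $\Fac P(\TT)=\TT$. Conversely, for support $\tau$-tilting $T$ we have $\add T\subseteq\add P(\Fac T)$ from the first step, with equality because $T$ is maximal $\tau$-rigid over its support algebra and therefore admits no proper $\tau$-rigid overmodule. Thus the two maps are mutually inverse. I expect the numerical/maximality step just described to be the main obstacle: granting that the Ext-projectives of a functorially finite torsion class are exactly as numerous as the indecomposable projectives of the support algebra, the rest of the argument is formal.
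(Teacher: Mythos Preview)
The paper does not prove this theorem; it is quoted verbatim from \cite{AIR} as background. So there is no ``paper's own proof'' to compare against, and your outline is essentially the strategy of \cite{AIR} itself: Auslander--Smal{\o} for the map $\TT\mapsto P(\TT)$, Bongartz completion for the numerics, and maximality of support $\tau$-tilting pairs for the inverse direction.

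That said, the step you flag as the obstacle is a genuine gap in your write-up. You complete $(P(\TT),Q)$ to a support $\tau$-tilting pair $(M,P)$ with $\Fac M\supseteq\TT$, then invoke that $M$ is Ext-projective in $\Fac M$ while $P(\TT)$ exhausts the Ext-projectives of $\TT$. But these are Ext-projectives of \emph{different} torsion classes, so nothing yet forces $\add M=\add P(\TT)$; the inequality $|P(\TT)|\le|\La|$ for $\tau$-rigid modules does not help either. What is missing is the identity $\Fac M=\TT$ (equivalently $M\in\TT$). In \cite{AIR} this comes from the precise form of the Bongartz completion: one keeps the projective part $Q$ fixed and obtains $\Fac M={}^{\perp}(\tau P(\TT))\cap Q^{\perp}$. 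Then one checks ${}^{\perp}(\tau P(\TT))\cap Q^{\perp}=\TT$ using the Auslander--Smal{\o} description of the torsion-free class, namely $\FF=\Sub\bigl(\tau P(\TT)\oplus\nu Q\bigr)$, together with $\Hom_\La(X,\nu Q)\cong D\Hom_\La(Q,X)$. Once $\Fac M=\TT$, your argument finishes cleanly: $M$ is Ext-projective in $\TT$, hence $\add M\subseteq\add P(\TT)$, and combined with the reverse inclusion you already have, $(P(\TT),Q)=(M,Q)$ is support $\tau$-tilting. Your treatment of the other composite $P(\Fac T)\cong T$ via maximality of support $\tau$-tilting pairs is correct.
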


Thus, support $\tau$-tilting modules are quite useful to investigate 
functorially finite torsion classes. Moreover, we have the following much stronger result  under the assumption that $|\sttilt \La|<\infty$.

\begin{thm}\cite{IJ}\label{iyama}
For a finite dimensional algebra $\La$, the following conditions are equivalent. 
\begin{itemize}
\item[(a)] $|\sttilt \La|<\infty$. 
\item[(b)] Any torsion class in $\mod\La$ is functorially finite.
\item[(c)] Any torsion-free class in $\mod\La$ is functorially finite.
\end{itemize}
\end{thm}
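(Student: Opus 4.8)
\textbf{Proof plan for Theorem \ref{iyama}.}

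The plan is to prove the cycle of implications (a) $\Rightarrow$ (b) $\Rightarrow$ (c) is the transpose/opposite statement, and then to close the loop by deriving (a) again. The central tool is Theorem \ref{fftor-spt}, which identifies $\sttilt\La$ with $\fftors\La$, so the real content is promoting \emph{functorially finite} torsion classes to \emph{all} torsion classes once we know there are only finitely many of the former.

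First I would prove (a) $\Rightarrow$ (b). Assume $|\sttilt\La| < \infty$, equivalently (by Theorem \ref{fftor-spt}) that $\fftors\La$ is finite; I want to show every torsion class $\TT \subseteq \mod\La$ is functorially finite. The key observation is that any torsion class is the union of the functorially finite torsion classes it contains: more precisely, for $\TT$ a torsion class and $X \in \TT$, the smallest torsion class $\TT_X$ containing $X$ (namely $\Fac$ of $X$ closed under extensions, equivalently $\mathsf{Filt}(\Fac X)$) is functorially finite, since it is generated by a single module. Hence $\TT = \bigcup_{X \in \TT} \TT_X$ is a directed union of functorially finite torsion classes. Since $\fftors\La$ is a finite set, this directed union is \emph{attained}: there is a single $X$ with $\TT = \TT_X$, and in particular $\TT$ is functorially finite. (One must check the poset-theoretic point that a directed union over a finite collection of subsets has a maximum element lying in the collection — here the collection is $\{\TT_X : X \in \TT\} \subseteq \fftors\La$, which is finite, and directedness gives an upper bound inside it.)

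Next, (b) $\Leftrightarrow$ (c) should follow by a duality argument: the functor $D = \Hom_K(-,K)$ induces an anti-equivalence $\mod\La \simeq \mod\La^{\op}$ sending torsion classes to torsion-free classes and preserving functorial finiteness (it swaps contravariantly and covariantly finite). So ``every torsion class in $\mod\La$ is functorially finite'' for $\La$ is equivalent to ``every torsion-free class in $\mod\La^{\op}$ is functorially finite'', and since condition (a) is left–right symmetric (again by Theorem \ref{fftor-spt} applied to both $\La$ and $\La^{\op}$, or by the known bijection $\sttilt\La \leftrightarrow \sttilt\La^{\op}$), statements (b) and (c) are equivalent. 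Finally, to close the cycle I would show (b) $\Rightarrow$ (a): if there were infinitely many support $\tau$-tilting modules, hence infinitely many functorially finite torsion classes, one builds a torsion class that is \emph{not} functorially finite. The standard construction: take an infinite strictly ascending (or descending) chain $\TT_1 \subsetneq \TT_2 \subsetneq \cdots$ in $\fftors\La$ — such a chain exists when $\fftors\La$ is infinite because $\fftors\La$, ordered by inclusion, is a poset with finite ``width'' control coming from mutation, so infiniteness forces an infinite chain — and let $\TT = \bigcup_i \TT_i$. This $\TT$ is a torsion class (union of a chain of torsion classes), but it is not functorially finite: a minimal right $\TT$-approximation of a suitable module would have to factor through some $\TT_i$, contradicting that the chain does not stabilize; Ext-projectives of $\TT$ would have to come from some finite stage, again contradicting strictness.

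The main obstacle I expect is the implication (b) $\Rightarrow$ (a), specifically producing the \emph{infinite chain} from mere infiniteness of $\fftors\La$ and then showing its union fails functorial finiteness. The directed-union argument for (a) $\Rightarrow$ (b) is clean once one knows singly-generated torsion classes are functorially finite (which is classical, via $\Fac X$ being functorially finite and closing under extensions preserving this). But the converse needs a genuinely non-functorially-finite example, and the cleanest route is probably to invoke the theory of \cite{AIR}: if $\sttilt\La$ is infinite then the Hasse quiver $\HH(\sttilt\La)$ is infinite yet every vertex has finite degree $|\La|$ (each $\tau$-tilting module has exactly $|\La|$ mutations by Definition-Theorem \ref{air mutation}), so by König's lemma there is an infinite path $T_0 > T_1 > T_2 > \cdots$ (or ascending), giving $\Fac T_0 \supsetneq \Fac T_1 \supsetneq \cdots$; intersecting (or unioning) yields the desired torsion class, and one verifies by an approximation argument that it is not functorially finite because its Ext-projective generator would be a support $\tau$-tilting module sitting below the entire infinite chain, which cannot exist. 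I would spell out this last verification carefully, as it is the crux of the theorem.
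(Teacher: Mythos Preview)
The paper does not actually prove this theorem: it is quoted from \cite{IJ} (listed in the bibliography as ``in preparation'') and used as a black box, so there is no proof in the paper to compare your proposal against.

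That said, your outline is essentially the argument that later appeared in the literature for this result, with one wrinkle worth flagging. Your treatment of (b) $\Leftrightarrow$ (c) goes through $\La^{\op}$ via $D$, which only gives ``(b) for $\La$'' $\Leftrightarrow$ ``(c) for $\La^{\op}$''; you then need the left--right symmetry of (a) to close the loop, so really you are proving (a) $\Leftrightarrow$ (b) and then transporting it, not (b) $\Leftrightarrow$ (c) directly. A cleaner route for the same algebra is the torsion-pair bijection $\TT\mapsto\TT^{\perp}$ together with Smal{\o}'s theorem that $\TT$ is functorially finite if and only if $\TT^{\perp}$ is; this gives (b) $\Leftrightarrow$ (c) in one line without passing to $\La^{\op}$.

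For (b) $\Rightarrow$ (a), your K\"onig's-lemma argument is on the right track, but you should be explicit about why an infinite path exists: the Hasse quiver $\HH(\sttilt\La)$ is $|\La|$-regular, and one needs either that the connected component of $\La$ is infinite, or else to work inside any infinite component. Once you have a strictly increasing chain $\TT_1\subsetneq\TT_2\subsetneq\cdots$, the verification that $\TT:=\bigcup_i\TT_i$ is not functorially finite is cleaner than you suggest: if $\TT=\Fac M$ for some $M$, then each indecomposable summand of $M$ lies in some $\TT_{i}$, hence $M\in\TT_j$ for large $j$, giving $\TT=\Fac M\subseteq\TT_j\subsetneq\TT$, a contradiction. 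No delicate approximation argument is needed.
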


Therefore, if there are only finitely many basic support $\tau$-tilting modules, then Theorem \ref{fftor-spt} give a bijection between $\sttilt\La$ and torsion classes of $\mod\La$. 

\section{Support $\tau$-tilting ideals and the Weyl group}\label{connection with the Weyl group}
The aim of this section is to give a complete description of all support $\tau$-tilting modules 
over preprojective algebras of Dynkin type. We also study their several properties.

Throughout this section, unless otherwise specified, let $Q$ be a Dynkin quiver with $Q_0=\{1,\ldots, n\}$, $\La$ the preprojective algebra of $Q$ and $I_i:=\La(1-e_i)\La$ for $i\in Q_0$. 
We denote by $\langle I_1,\ldots,I_n\rangle$ the set of ideals of $\La$ which can be written as 
$$I_{i_1}I_{i_2}\cdots I_{i_k}$$ for some $k\geq0$ and $i_1,\ldots,i_k\in Q_0$.

\subsection{Support $\tau$-tilting ideals}\label{spt ideal}
In this subsection, we introduce support $\tau$-tilting ideals.  
We will show that any element of  $\langle I_1,\ldots,I_n\rangle$ is a basic support $\tau$-tilting ideal. This fact plays a key role in this paper. 

We start with the following definition.

\begin{defi}\label{def spt} 
We call a two-sided ideal $I$ of $\La$ {\em support $\tau$-tilting} if $I$ is a left support $\tau$-tilting $\La$-module and a right support $\tau$-tilting $\La$-module. 
\end{defi}

The aim of this subsection is to prove the following result.

\begin{thm}\label{main1}
Any $T\in\langle I_1,\ldots,I_n\rangle$ is a basic support $\tau$-tilting ideal of $\La$.
\end{thm}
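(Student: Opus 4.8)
The plan is to prove Theorem \ref{main1} by induction on the length $k$ of an expression $T = I_{i_1}I_{i_2}\cdots I_{i_k}$. The base case $k=0$ gives $T = \La$, which is trivially a (support) $\tau$-tilting ideal on both sides. For the inductive step, I would first establish the key lemma that $I_i$ itself is a support $\tau$-tilting $\La$-module (this is alluded to as Lemma \ref{I_i} in the introduction), and more importantly that the operation $T \mapsto TI_i$ sends a basic support $\tau$-tilting ideal to a basic support $\tau$-tilting ideal. Since the definition of support $\tau$-tilting ideal is left-right symmetric and $(TI_i)^{\op} \cong I_i^{\op}T^{\op}$ in the appropriate sense, it suffices to handle one side; I would work with right modules and show $TI_i \in \sttilt\La$ whenever $T \in \sttilt\La$ and $T \in \langle I_1,\ldots,I_n\rangle$.

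The heart of the argument is to identify $TI_i$ with a mutation of $T$ — or with $T$ itself — at the vertex $i$. Writing $T = \bigoplus_{j} T_j$ as a basic support $\tau$-tilting module with indecomposable summands indexed so that $T_j$ "sits over" vertex $j$, I expect that $TI_i$ is obtained from $T$ by replacing the summand $T_i$ (if present) using the ideal multiplication, and that by Lemma \ref{str of I} the module $TI_i = T/\,(\text{largest factor of } T \text{ with composition factors only } S_i)$. The crucial point is to show that the canonical surjection $T \twoheadrightarrow TI_i$ realizes the left mutation $\mu_{T_i}^-$ described in Definition-Theorem \ref{air mutation}(iii), or else is an isomorphism when $T_i \in \Fac(T/T_i)$ (in which case one is in case (ii)(b) failing, i.e.\ a right mutation / no change at the level of the ideal). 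Concretely: one computes a minimal left $\add(T/T_i)$-approximation $T_i \to U'$, and checks its cokernel $Y$ is a direct sum of copies of some indecomposable $Y_1$, whose projective presentation and ideal-theoretic description match $TI_i$; then Definition-Theorem \ref{air mutation}(iii) guarantees $Y_1 \oplus (T/T_i)$ is again support $\tau$-tilting. One must also check that the resulting module is again an ideal in $\langle I_1,\ldots,I_n\rangle$, which is immediate from $TI_i$ being the product, and that it remains basic, which follows from the mutation theorem.

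I would also need the companion fact that multiplication by $I_i$ is compatible on the two sides: that is, $(TI_i)$ being right support $\tau$-tilting forces, via the same analysis applied to $\La^{\op}$ and the ideal $I_i^{\op} T^{\op}$, that $TI_i$ is left support $\tau$-tilting as well. Here the self-injectivity of $\La$ in Dynkin type (noted after Definition \ref{preprojective}) and the symmetry of the relations defining $\La$ are what make $\La \cong \La^{\op}$ compatibly with the $I_i$, so the left and right statements are genuinely dual rather than requiring separate work. Assembling: given $T = I_{i_1}\cdots I_{i_{k-1}} I_{i_k}$, the inductive hypothesis says $T' := I_{i_1}\cdots I_{i_{k-1}}$ is a basic support $\tau$-tilting ideal in $\langle I_1,\ldots,I_n\rangle$, and then the mutation/idempotent analysis above applied with $i = i_k$ shows $T = T' I_{i_k}$ is too.

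The main obstacle I anticipate is the precise identification of the cokernel $Y$ in Definition-Theorem \ref{air mutation}(iii) with the ideal quotient $T / TI_i$ — equivalently, showing that the minimal left $\add(T/T_i)$-approximation of $T_i$ has cokernel supported only at $S_i$ and agrees with what Lemma \ref{str of I} produces. This requires understanding the module structure of the $I_j$'s and their products well enough to track minimal projective presentations, and is where the special geometry of preprojective algebras (the structure of $\Hom$-spaces between the $I_j$ and the behavior of $-\otimes_\La I_i$) really enters. Everything else — basicness, the bijective bookkeeping with $\langle I_1,\ldots,I_n\rangle$, and the left-right duality — should be comparatively formal once that identification is in hand.
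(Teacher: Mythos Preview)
Your overall strategy --- induction on the length of the product, together with identifying multiplication by $I_i$ as a mutation --- is exactly the paper's approach. However, there is a genuine left/right confusion that would cause your argument to fail as written.

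You propose to show that $TI_i$ (right multiplication) is a right support $\tau$-tilting module by identifying it with a single mutation of $T$ at the summand ``sitting over vertex $i$''. This is false. The right $\La$-module decomposition of a two-sided ideal $T$ is $T=\bigoplus_j e_jT$, and right multiplication by $I_i$ gives $TI_i=\bigoplus_j e_jTI_i$; by Lemma~\ref{str of I} each summand $e_jT$ gets replaced by its largest submodule with no $S_i$ in the top, so \emph{several} summands may change at once (already for type $A_2$: take $T=I_2$, $i=1$, and both $e_1T$ and $e_2T$ change). Also, $TI_i$ is a \emph{submodule} of $T$, so there is no canonical surjection $T\twoheadrightarrow TI_i$ as you wrote.

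The correct operation is \emph{left} multiplication: $I_iT = e_iI_iT \oplus (1-e_i)T$ differs from $T$ only in the $i$-th right-module summand, since $(1-e_i)I_i=(1-e_i)\La$. The paper proves (Proposition~\ref{TI}) that if $T$ is a support $\tau$-tilting \emph{ideal} then $I_iT$ is a right support $\tau$-tilting module, by tensoring the presentation $e_i\La\to\bigoplus_{e(a)=i}e_{s(a)}\La\to e_iI_i\to 0$ with $T$ on the right and checking that the resulting map $e_iT\to\bigoplus e_{s(a)}T$ is a minimal left $\add((1-e_i)T)$-approximation with cokernel $e_iI_iT$. Two technical ingredients you did not mention are essential here: the surjection $\La\twoheadrightarrow\End_\La(T)$ (Lemma~\ref{surje}), which is what forces the tensored map to be an approximation and also gives $e_iT\notin\Fac((1-e_i)T)$; and the Tor-vanishing (Lemma~\ref{2of1}, Corollary~\ref{tensor}) identifying $e_iI_i\otimes_\La T$ with $e_iI_iT$, which uses that $T$ is support $\tau$-tilting as a \emph{left} module. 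This last point is why the induction hypothesis must be the two-sided notion ``support $\tau$-tilting ideal'', exactly as you anticipated. Once you swap $TI_i$ for $I_iT$ throughout your right-module argument (and symmetrically for the left side), your outline becomes the paper's proof.
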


First, we recall some properties for later use (see \cite{BBK}). 
For any $i\in Q_0$, we can take a minimal projective presentation of $S_i:=\La/I_i$ as follows

\begin{eqnarray*}\label{simple resolution}
\xymatrix@C20pt@R20pt{ e_i\Lambda\ar[r]^{p_2\ \ \ \ \ \ \ \ \ \ }  &{\displaystyle\bigoplus_{\begin{smallmatrix}a\in \overline{Q}_1, e(a)=i\end{smallmatrix}} e_{s(a)}\La}  \ar[r]^{\ \ \ \ \ \ \ \ \ \ p_1 } & e_i\Lambda \ar[r]^{p_0}&S_i\ar[r]&0.     }
\end{eqnarray*}
Since $\Im p_1=e_iI_i$, we have the following exact sequences
\begin{equation}\label{eI s_i}
\xymatrix@C20pt@R20pt{ 0 \ar[r]^{}  & e_iI_i \ar[r]^{\iota}  &e_i\Lambda \ar[r]^{p_0} & S_i \ar[r]&0.     }
\end{equation}
\begin{equation}\label{La I}
\xymatrix@C20pt@R20pt{  e_i\Lambda \ar[r]^{p_2\ \ \ \ \ \ \ \ }  &{\displaystyle\bigoplus_{\begin{smallmatrix}a\in \overline{Q}_1, e(a)=i\end{smallmatrix}} e_{s(a)}\La}   \ar[r]^{\ \ \ \ \ \ \ \ \pi } &e_iI_i \ar[r]&0.     }\end{equation}
As a direct sum of (\ref{eI s_i}) and $0\to(1-e_i)\La\stackrel{\id}{\to}(1-e_i)\La\to0$, we have an exact sequence

\begin{equation}\label{I s_i}
\xymatrix@C20pt@R20pt{ 0 \ar[r]^{}  & I_i \ar[r]^{\left(\begin{smallmatrix}\iota& 0\\0&\id \end{smallmatrix}\right)}  &\Lambda  \ar[r]^{(p_0\ 0) } & S_i \ar[r]&0.     }
\end{equation}

Moreover, we have $\Ker p_2=S_{\sigma(i)}$, where $\sigma:Q_0\to Q_0$ is a Nakayama permutation of $\La$ (i.e. $D(\La e_{\sigma(i)})\cong e_{i}\La$) \cite[Proposition 4.2]{BBK}. 
Therefore,  we have the following exact sequences

\begin{equation}\label{simple sigma resolution}
\xymatrix@C20pt@R20pt{ 0 \ar[r]^{}  & S_{\sigma(i)} \ar[r]^{}  &e_i\Lambda\ar[r]^{p_2 \ \ \  \ \ \ \ \ \ \ } &{\displaystyle\bigoplus_{\begin{smallmatrix}a\in \overline{Q}_1, e(a)=i\end{smallmatrix}} e_{s(a)}\La}.     }
\end{equation}

Then we give the following lemma.

\begin{lemm}\label{I_i}
$I_i$ is a basic support $\tau$-tilting ideal of $\La$ for any $i\in Q_0$. 
\end{lemm}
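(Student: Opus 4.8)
The goal is to show that $I_i = \La(1-e_i)\La$ is a basic support $\tau$-tilting ideal, meaning it is both a support $\tau$-tilting left $\La$-module and a support $\tau$-tilting right $\La$-module. By the left-right symmetry of the preprojective algebra (replacing $Q$ by $Q^{\op}$ only permutes the vertices and preserves the defining relation, so $\La^{\op} \cong \La$ as algebras after relabelling), it suffices to treat $I_i$ as a right $\La$-module; the left case follows by the analogous argument over $\La^{\op}$. The strategy is to exhibit the idempotent witnessing support and then verify the $\tau$-rigidity and the rank condition directly from the projective presentations (\ref{I s_i}) and (\ref{La I}) already recorded above.

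First I would identify the candidate support $\tau$-tilting pair. From the exact sequence (\ref{I s_i}), $I_i$ has a projective presentation $e_i\La \xrightarrow{\ } \La \to I_i \to 0$ coming from the map $p_1$ whose image is $e_iI_i$; more precisely, splicing (\ref{La I}) with (\ref{I s_i}) gives a minimal projective presentation of $I_i$ whose terms are visible. The point is that $I_i = (1-e_i)\La \oplus e_iI_i$ as right modules, and $e_iI_i$ is the kernel term; together these $n$ indecomposable-ish summands plus possibly a projective summand $e_i\La$ in degree $-1$ should assemble into a support $\tau$-tilting pair $(I_i, 0)$ — here I expect $|I_i| = n = |\La|$, so $I_i$ is in fact a genuine $\tau$-tilting module (with trivial idempotent), not merely support $\tau$-tilting, unless $e_iI_i$ fails to be faithful, in which case a nonzero idempotent appears. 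I would compute $|I_i|$ by decomposing $I_i$ into indecomposables using the structure of $e_jI_i$ for each $j$.

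Next comes the core computation: verifying $\Hom_\La(I_i, \tau I_i) = 0$. The cleanest route is via the Auslander–Reiten formula / the description of $\tau$ through projective presentations: for a module $M$ with minimal projective presentation $P_1 \xrightarrow{d} P_0 \to M \to 0$, one has $\tau M = \ker(\nu d)$ where $\nu$ is the Nakayama functor, and $\Hom_\La(M, \tau M) = 0$ is equivalent (by \cite{AIR}) to the condition that every map $P_1 \to P_0$ factoring appropriately... — concretely I would use the criterion that $(M, P)$ is $\tau$-rigid iff $\Hom$ vanishes on the relevant approximation. Since $\La$ is selfinjective, $\nu$ is an equivalence and $\tau M$ is simply $\Omega^{-1}$ applied after a twist; applying $\nu$ to the presentation of $I_i$ and using the exact sequence (\ref{simple sigma resolution}) identifying $\ker p_2 = S_{\sigma(i)}$, I can compute $\tau I_i$ explicitly and then check that there are no nonzero homomorphisms from $I_i$. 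Alternatively, and perhaps more transparently, I would use that $I_i = \ker(\La \to S_i)$ so that $I_i$ is the first syzygy $\Omega S_i$ of $S_i$ (up to the projective summand $(1-e_i)\La$), hence $\tau I_i \cong \Omega^{-1}\Omega^2 S_i = \Omega S_i$ twisted by $\nu$ over the selfinjective algebra $\La$; the required $\Hom$-vanishing then reduces to $\Ext^1_\La(S_i, \nu^{-1} I_i) = 0$ type statement, which follows because $S_i$ does not appear as a composition factor in the relevant socle layer by Lemma \ref{str of I}.

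\textbf{Main obstacle.} I expect the delicate point to be the precise bookkeeping of indecomposable summands of $I_i$ and of $\tau I_i$ — in particular confirming that $e_iI_i$ is indecomposable (or identifying its summands) and that no summand of $I_i$ has nonzero image in $\tau I_i$. The selfinjectivity of $\La$ makes the homological algebra tractable, but the Nakayama permutation $\sigma$ intervenes and one must track it carefully through the sequences (\ref{eI s_i})–(\ref{simple sigma resolution}); a sign error or an off-by-one in the syzygy degree would break the $\tau$-rigidity check. The rank condition $|I_i| + |P| = |\La| = n$ and basicness are comparatively routine once the summand structure is pinned down. I would therefore organize the proof as: (1) record $I_i = (1-e_i)\La \oplus e_iI_i$ and its minimal projective presentation; (2) compute $\tau I_i$ via $\nu$ and the syzygy description, using (\ref{simple sigma resolution}); (3) check $\Hom_\La(I_i,\tau I_i)=0$ using Lemma \ref{str of I} to control composition factors; (4) count summands to get the support $\tau$-tilting pair; (5) invoke $\La \cong \La^{\op}$ to conclude the left-module statement and hence that $I_i$ is a support $\tau$-tilting ideal.
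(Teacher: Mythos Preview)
Your plan is essentially the paper's own approach: split off the projective summand $(1-e_i)\La$, apply the Nakayama functor $\nu$ to the minimal presentation (\ref{La I}) of $e_iI_i$, and use the sequence (\ref{simple sigma resolution}) to identify $\tau I_i$. Where your outline stays vague, the paper is sharp: comparing the two sequences obtained by applying $\nu$ yields the clean identification $\tau(I_i)\cong\tau(e_iI_i)\cong\nu(S_{\sigma(i)})\cong S_i$. Once you know $\tau I_i\cong S_i$, the vanishing $\Hom_\La(I_i,\tau I_i)=0$ is immediate because $S_i\notin\add(\Top I_i)$; there is no need to pass through an $\Ext^1$ reformulation or invoke Lemma~\ref{str of I}. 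Your ``alternative'' syzygy computation (``$\tau I_i\cong\Omega^{-1}\Omega^2 S_i=\Omega S_i$ twisted by $\nu$'') is muddled and would not land on the correct answer without more care---drop it and stick with the first route.

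Two small points your outline misses. First, the count $|I_i|=|\La|$ is not about ``faithfulness'' but about the socle: since $\La$ is selfinjective each $e_j\La$ has simple socle and these socles are pairwise non-isomorphic, so the proper nonzero submodule $e_iI_i\subset e_i\La$ cannot be isomorphic to any $e_j\La$; hence the $n$ summands $e_1I_i,\ldots,e_nI_i$ (which equal $e_j\La$ for $j\neq i$) are pairwise non-isomorphic indecomposables. Second, there is a degenerate case you should dispose of explicitly: if $e_i\La$ is simple then $\La$ is simple (connected Dynkin forces $Q=A_1$) and $I_i=0$, which is trivially support $\tau$-tilting. Your appeal to $\La\cong\La^{\op}$ for the left-module statement is fine, though the paper simply says ``similarly''.
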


\begin{proof}
It is clear that $I_i$ is basic. 
We will show that $I_i$ is a right support $\tau$-tilting $\La$-module; the proof for a left $\La$-module is similar. 
If $e_i\La$ is a simple module, then $\La$ is a simple algebra since $\La$ is a preprojective algebra. 
Thus, in this case, we have $I_i=0$ and it is a support $\tau$-tilting module.

Assume that $e_i\La$ is not a simple module and hence $e_iI_i\neq0$. 
Since $\La$ is selfinjective, $e_j\La$ has a simple socle for any $j\in Q_0$ and these simple modules are mutually non-isomorphic. Therefore, $e_iI_i$ is not isomorphic to $e_j\La$ for any $j\in Q_0$. 
Hence, we obtain $|I_i|=|\La|$. Thus, in this case, it is enough to show that 
$\Hom_\La(I_i,\tau(I_i))=0.$ 

Applying the functor $\nu:=D\Hom_\La(-,\La)$ to (\ref{La I}), 
we have the following exact sequence 

\begin{equation}\label{tau I_i}
\xymatrix@C25pt@R20pt{ 0 \ar[r]^{}  & \tau(e_iI_i) \ar[r]^{}  & \nu(e_i\Lambda)\ar[r]^{\nu p_2\ \ \ \ \ \ \ \ \ \ } &{\displaystyle\bigoplus_{\begin{smallmatrix}a\in \overline{Q}_1, e(a)=i\end{smallmatrix}} \nu(e_{s(a)}\La).}     }
\end{equation}

On the other hand, applying the functor $\nu$ to (\ref{simple sigma resolution}), we have the following exact sequence 

\begin{equation}\label{sigma resolution}
\xymatrix@C20pt@R20pt{ 0 \ar[r]^{}  & \nu(S_{\sigma(i)}) \ar[r]^{}  & \nu(e_i\Lambda)\ar[r]^{\nu p_2\ \ \ \ \ \ \ \ \  } &{\displaystyle\bigoplus_{\begin{smallmatrix}a\in \overline{Q}_1, e(a)=i\end{smallmatrix}} \nu(e_{s(a)}\La)}.     }
\end{equation}

Since $\La$ is selfinjective, we have $\nu(S_{\sigma(i)})\cong\nu(\Soc(e_i\La))\cong\Top(e_i\La)\cong S_i$. 
Therefore, comparing (\ref{tau I_i}) with (\ref{sigma resolution}), 
we get $\tau(I_i)\cong\tau(e_iI_i)\cong S_i$. 
Since $S_i\notin\add(\Top(I_i))$, we have $\Hom_\La(I_i,\tau(I_i))=0$. 
\end{proof}

The next two statements are analogous to results of \cite[Lemma II.1.1, Proposition II.1.5]{BIRS}, where tilting modules over preprojective algebras of non-Dynkin type are treated. 
Though the proofs given here are essentially the same, we include them for  convenience.

\begin{lemm}\label{2of1}
Let $T$ be a support $\tau$-tilting $\Lambda$-module. 
For a simple $\La^{\op}$-module $S$, at least one
of the statements $T\otimes_\Lambda S=0$ and ${\Tor}^\Lambda_1(T,S)=0$ holds.
\end{lemm}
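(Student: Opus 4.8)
The plan is to reduce everything to the minimal projective presentation of $T$ as a right $\La$-module and use the interpretation of $\otimes_\La$ and $\Tor_1^\La$ via this presentation. Write $\La$ as a right module and fix a minimal projective presentation $P_1 \xrightarrow{f} P_0 \to T \to 0$ in $\mod\La$. Since a simple $\La^{\op}$-module has the form $S = \La e_i / \rad(\La e_i)$ for some $i$, tensoring this presentation with $S$ on the right computes $T \otimes_\La S = \Coker(f \otimes_\La S)$ and $\Tor_1^\La(T,S) = \Ker(f \otimes_\La S)$, where $f \otimes_\La S$ is the map obtained by applying the $g$-matrix-type data of $f$ and reducing modulo the radical. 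The key point I would exploit is the standard fact (from \cite{AIR}) that $T$ being support $\tau$-tilting forces, at each vertex $i$, the $i$-th column of the presentation matrix of $f$ to have entries that cannot produce both a nonzero kernel and a nonzero cokernel after reducing mod radical; concretely, $\tau$-rigidity of $T$ is equivalent to $\Hom_\La(P_1, T) \xrightarrow{\cdot f} \Hom_\La(P_0, T)$ being surjective, and this surjectivity is what rules out one of the two vanishings failing.

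In more detail, the steps would be: (1) record that for $S = S_i^{\op}$ (the simple right $\La^{\op}$-module at $i$), $T \otimes_\La S_i^{\op} \ne 0$ is equivalent to $e_i$ appearing in $P_0$ (i.e.\ $T$ has a direct summand whose projective cover involves $P_i = e_i\La$), and $\Tor_1^\La(T, S_i^{\op}) \ne 0$ is equivalent to $e_i$ appearing in $P_1$, once we use minimality of the presentation to know $\Image f \subseteq \rad P_0$; (2) invoke the characterization of $\tau$-rigid pairs via $g$-vectors / presentations from \cite[Proposition 2.3 and surrounding]{AIR}: if $T$ is $\tau$-tilting over $\La/\langle e\rangle$, the presentations of its indecomposable summands, together with the projectives killed by $e$, have linearly independent $g$-vectors, and in fact the combined "$g$-matrix" is invertible over $\mathbb{Z}$; (3) deduce from this invertibility (or directly from the sign-coherence of $g$-vectors of $\tau$-rigid modules, \cite[Theorem 5.5]{AIR} or the treatment in the cited silting references) that for each fixed $i$, the index $i$ cannot occur simultaneously in $P_0$ and in $P_1$ — otherwise the $i$-th row of the $g$-matrix would have a cancellation contradicting sign-coherence, or the relevant minor would vanish. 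This yields exactly: either $T \otimes_\La S = 0$ or $\Tor_1^\La(T,S) = 0$.

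The main obstacle I anticipate is making step (3) precise and self-contained: the cleanest route is to cite the sign-coherence of $g$-vectors of $\tau$-rigid modules from \cite{AIR}, which says the $g$-vector of each indecomposable summand of a $\tau$-rigid module, written in the basis of indecomposable projectives, has entries all $\geq 0$ or all $\leq 0$ — wait, that is not quite the statement either; the correct statement is that the matrix of $g$-vectors of a $\tau$-tilting module is $\mathbb{Z}$-invertible, which by an argument on columns forces the pattern we need. Alternatively, one can argue more directly: suppose both $T\otimes_\La S \ne 0$ and $\Tor_1^\La(T,S)\ne 0$; then $e_i\La$ is a summand of both $P_0$ and $P_1$, so the minimal projective presentation of some indecomposable summand $T_j$ of $T$ has $e_i\La$ in both terms, and one shows via \cite[Definition-Theorem \ref{air mutation}]{AIR}-type reasoning (or a direct $\Hom(-,\tau T)$ computation using the Auslander--Reiten formula $\overline{\Hom}_\La(T, \tau T) \cong D\,\Tor_1^\La(T, T)$-style duality) that this produces a nonzero map $T_j \to \tau T_j$, contradicting $\tau$-rigidity. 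I would carry out whichever of these is shortest given what the paper has already set up, and I expect the bookkeeping of "which vertex appears in which term of the presentation" to be the only genuinely delicate part.
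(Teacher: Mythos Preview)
Your reduction is exactly the one the paper uses: take a minimal projective presentation $P_1\to P_0\to T\to 0$, observe that by minimality the induced map $P_1\otimes_\La S\to P_0\otimes_\La S$ is zero, so $T\otimes_\La S\cong P_0\otimes_\La S$ and $\Tor_1^\La(T,S)\cong P_1\otimes_\La S$, and then the question becomes whether $e_i\La$ can be a summand of both $P_0$ and $P_1$.

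Where you diverge is in how much work you propose for this last point. The paper simply cites \cite[Proposition~2.5]{AIR}, which says precisely that for a support $\tau$-tilting (indeed, any $\tau$-rigid) module, the minimal projective presentation has $P_0$ and $P_1$ with no common indecomposable summand. This finishes the proof in one line. Your detours through sign-coherence of $g$-vectors or $\mathbb{Z}$-invertibility of the $g$-matrix are unnecessary and, as you yourself suspect, not quite the right statements for this purpose. Your ``alternative'' direct argument is closer to how \cite[Proposition~2.5]{AIR} is actually proved, but contains a slip: if $e_i\La$ lies in both $P_0$ and $P_1$, it need not lie in both terms of the presentation of a \emph{single} indecomposable summand $T_j$; it could appear in $P_0(T_j)$ and $P_1(T_{j'})$ for $j\neq j'$. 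This is harmless --- one still obtains a nonzero map in $\Hom_\La(T_{j},\tau T_{j'})\subset\Hom_\La(T,\tau T)$ --- but you should state it correctly. In any case, just cite the result from \cite{AIR} and the proof is three lines.
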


\begin{proof}
By \cite[Proposition 2.5]{AIR}, we can take a minimal projective presentation 
$P_1\to P_0\to T\to0$ such that $P_0$ and $P_1$ do not have a common summand. 
Thus, at least one of the statement $P_0\otimes_\Lambda S=0$ and $P_1\otimes_\Lambda S=0$ holds.
Since we have $\Tor_1^{\La}(T,S)\cong P_1\otimes_\Lambda S$ and $T\otimes_\Lambda S\cong P_0\otimes_\Lambda S$,  the assertion follows. 
\end{proof}

\begin{cor}\label{tensor}
Let $T$ be a support $\tau$-tilting $\Lambda$-module. 
If $TI_i\neq T$, then we have $T\otimes_{\La}I_i\cong TI_i$.
\end{cor}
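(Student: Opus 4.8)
The plan is to tensor the defining exact sequence~(\ref{I s_i}) for $S_i$ with $T$ over $\La$ and read off the result from the long exact sequence of $\Tor$. Applying $T\otimes_\La-$ to
$$0\to I_i\to\La\to S_i\to0$$
yields
$$\Tor_1^\La(T,\La)\to\Tor_1^\La(T,S_i)\to T\otimes_\La I_i\to T\otimes_\La\La\to T\otimes_\La S_i\to0.$$
Since $\Tor_1^\La(T,\La)=0$ and $T\otimes_\La\La\cong T$, this reduces to the four-term exact sequence
$$0\to\Tor_1^\La(T,S_i)\to T\otimes_\La I_i\xrightarrow{\ \varphi\ } T\to T\otimes_\La S_i\to0,$$
where $\varphi$ is the natural map induced by the inclusion $I_i\hookrightarrow\La$, whose image is exactly $TI_i\subseteq T$. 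Hence $T\otimes_\La S_i\cong T/TI_i$, and by Lemma~\ref{str of I} the hypothesis $TI_i\neq T$ means $T/TI_i\neq0$.

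The key step is then to invoke Lemma~\ref{2of1}: for the simple $\La^{\op}$-module $S_i$, at least one of $T\otimes_\La S_i=0$ and $\Tor_1^\La(T,S_i)=0$ holds. Since we have just seen that $T\otimes_\La S_i\cong T/TI_i\neq0$, we must have $\Tor_1^\La(T,S_i)=0$. Feeding this back into the four-term exact sequence shows that $\varphi\colon T\otimes_\La I_i\to T$ is injective, and as its image is $TI_i$ we conclude $T\otimes_\La I_i\cong TI_i$, as desired.

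I do not anticipate a genuine obstacle here: the argument is a routine diagram chase once Lemma~\ref{2of1} and the identification $T\otimes_\La S_i\cong T/TI_i$ are in place. The only point requiring a little care is that $S_i$ must be regarded as a \emph{left} $\La$-module (equivalently a right $\La^{\op}$-module) so that $T\otimes_\La S_i$ and $\Tor_1^\La(T,S_i)$ make sense and Lemma~\ref{2of1} applies verbatim; this is consistent with the set-up of Lemma~\ref{2of1}, where $S$ ranges over simple $\La^{\op}$-modules.
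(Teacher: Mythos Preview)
Your proof is correct and follows essentially the same route as the paper: tensor the short exact sequence $0\to I_i\to\La\to S_i\to0$ with $T$, use the hypothesis $TI_i\neq T$ to get $T\otimes_\La S_i\neq0$, invoke Lemma~\ref{2of1} to force $\Tor_1^\La(T,S_i)=0$, and conclude that $T\otimes_\La I_i\to T$ is injective with image $TI_i$. The only cosmetic difference is that you spell out the four-term exact sequence and the identification $T\otimes_\La S_i\cong T/TI_i$ more explicitly than the paper does.
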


\begin{proof}
By the assumption $TI_i\neq T$ and Lemma \ref{str of I}, we have $T\otimes_\Lambda S_i\neq 0$.
Then, by Lemma \ref{2of1}, we obtain ${\Tor}^\Lambda_1(T,S_i)=0$. 
By applying the functor $T\otimes_\La-$ to the exact sequence (\ref{I s_i}), 
we get the following exact sequence 
$$0={\Tor}^\Lambda_1(T,S_i)\to T\otimes_\Lambda I_i\to T\otimes_\Lambda\La=T.$$
Hence we have $T \otimes_{\La} I_i\cong\Im( T\otimes_\Lambda I_i\to T)=TI_i$.
\end{proof}

Next we give the following easy observation.

\begin{lemm}\label{decompose}
Let $I$ be a two-sided ideal of $\La$. 
For a primitive idempotent $e_i$ with $i\in Q_0$, 
$e_iI$ is either indecomposable or zero. 
\end{lemm}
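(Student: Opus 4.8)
The plan is to exhibit $e_iI$ as a submodule of the indecomposable projective $e_i\La$ and use the selfinjectivity of $\La$ in the Dynkin case. First I would observe that, since $I$ is a two-sided ideal and in particular a right ideal, the subset $e_iI$ is closed under right multiplication by $\La$, so $e_iI$ is a right $\La$-submodule of $e_i\La$.

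Next I would recall that $\La$ is a finite dimensional selfinjective algebra (as noted in Section \ref{preli} for $Q$ Dynkin). Hence every indecomposable projective right $\La$-module has simple socle; in particular $\Soc(e_i\La)$ is simple. The key point is then the general fact that any nonzero submodule of a module with simple socle is indecomposable: if $0\neq N\subseteq e_i\La$, then $\Soc N$ is nonzero (over an Artinian algebra every nonzero module has nonzero socle) and $\Soc N\subseteq\Soc(e_i\La)$, so $\Soc N=\Soc(e_i\La)$ is simple; a module with simple socle cannot be written as a direct sum of two nonzero submodules, since the socle would then split as a direct sum of two nonzero submodules, contradicting simplicity.

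Applying this with $N=e_iI$, I conclude that either $e_iI=0$ or $e_iI$ is indecomposable, which is the assertion. There is no real obstacle here; the statement is an immediate consequence of selfinjectivity, and the only thing to be careful about is checking that $e_iI$ is genuinely a right submodule of $e_i\La$ (which uses that $I$ is a right, hence two-sided, ideal).
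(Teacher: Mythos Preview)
Your proof is correct and follows exactly the same approach as the paper: $e_iI$ is a submodule of $e_i\La$, and since $\La$ is selfinjective, $e_i\La$ has simple socle, forcing any nonzero submodule to be indecomposable. The paper's proof is a terse two-sentence version of your argument; you have simply unpacked the standard fact that nonzero submodules of a module with simple socle are indecomposable.
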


\begin{proof}
Since $\La$ is selfinjective, $e_i\La$ has a simple socle. 
Thus the submodule $e_iI$ of $e_i\La$ has a simple socle or zero. 
\end{proof}

The next two lemmas are crucial.

\begin{lemm}\label{surje}
Let $I$ be a two-sided ideal of $\La$. 
Then there exists a surjective map  
$$\La\to\End_\La(I),\ \ \lambda\mapsto(I\ni x\mapsto \lambda x\in I).$$
In particular, for an idempotent $e_i$, $i\in Q_0$, we have a surjective map 
$$ e_i\La(1-e_i)\to\Hom_\La((1-e_i)I,e_iI).$$ 
\end{lemm}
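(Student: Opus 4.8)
The plan is to build the map $\La\to\End_\La(I)$ as the restriction of the canonical isomorphism $\La\cong\End_\La(\La)$ (acting by left multiplication) and then show surjectivity using the fact that $\La$ is selfinjective. First I would define $\varphi:\La\to\End_\La(I)$ by $\varphi(\lambda)(x)=\lambda x$; this is well defined because $I$ is a two-sided ideal (so $\lambda x\in I$ for $x\in I$), and it is a $K$-algebra homomorphism since the composition convention $gf$ means ``first $f$, then $g$'' and left and right multiplications commute. The content is surjectivity. Given $f\in\End_\La(I)$, I want to extend $f$ to an endomorphism of $\La$ and read off the corresponding left-multiplier.

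The key step: since $\La$ is selfinjective, $\La$ is an injective $\La$-module, so the inclusion $\iota:I\hookrightarrow\La$ together with $f:I\to I\subseteq\La$ yields a map $g:\La\to\La$ with $g\circ\iota=\iota\circ f$, i.e. $g$ restricts to $f$ on $I$. Now $g\in\End_\La(\La)\cong\La$ acting by left multiplication, so $g$ is left multiplication by some $\lambda:=g(1)\in\La$; hence for $x\in I$ we have $f(x)=g(x)=\lambda x=\varphi(\lambda)(x)$, so $\varphi(\lambda)=f$. This proves surjectivity.

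For the ``in particular'' statement, I would apply the surjection $\La\twoheadrightarrow\End_\La(I)$ and then compose with the functor $e_i(-)$: note $\End_\La(I)\cong\bigoplus_{i,j}\Hom_\La(e_jI,e_iI)$ by decomposing $1=\sum e_i$ and using $\Hom_\La(I,I)=\Hom_\La(\bigoplus e_jI,\bigoplus e_iI)$ via the idempotent decomposition of $I$ as a right module. The summand of $\La$ corresponding to the component $\Hom_\La((1-e_i)I,e_iI)$ is $e_i\La(1-e_i)$, since an element $\lambda$ inducing a map $(1-e_i)I\to e_iI$ by left multiplication can be replaced by $e_i\lambda(1-e_i)$ without changing its action on $(1-e_i)I$ followed by projection to $e_iI$; surjectivity of the restricted map $e_i\La(1-e_i)\to\Hom_\La((1-e_i)I,e_iI)$ then follows from surjectivity of $\varphi$ by chasing the appropriate matrix entry.

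The main obstacle I anticipate is purely bookkeeping: being careful with the left/right module structures (the ideal $I$ is simultaneously a left and right $\La$-module, and $\End_\La(I)$ refers to right-module endomorphisms, which commute with right multiplication and are therefore given by left multiplication) and with how the idempotent decomposition of $\End_\La(I)$ matches the Peirce decomposition $\La=\bigoplus e_i\La e_j$. Once the selfinjectivity (injectivity of $\La_\La$) is invoked to get the extension $g$, the rest is routine.
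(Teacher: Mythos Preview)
Your proof is correct and follows essentially the same approach as the paper: use selfinjectivity of $\La$ to extend $f\in\End_\La(I)$ along the inclusion $I\hookrightarrow\La$ to some $h\in\End_\La(\La)$, which is left multiplication by $\lambda=h(1)$. Your treatment of the ``in particular'' statement via the Peirce decomposition is in fact more detailed than the paper's, which simply says the second statement follows easily from the first.
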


\begin{proof}
We have to show that, for any $f\in\End_\La(I)$, there exists $\lambda\in\La$ such that $f=(\lambda\cdot):I\to I.$ 
Since $\La$ is selfinjective, there exists a $\La$-module map $h:\La\to\La$ 
making the diagram 
\[\xymatrix{0\ar[r]&I\ar[r]^{\iota}\ar[d]^{f}&\La\ar@{.>}[d]^h\\
0\ar[r]&I\ar[r]^{\iota}&\La,}
\]
commutative, where $\iota$ is the canonical inclusion and $h:\La\to\La$ is the $\La$-module map. 
Put $h(1)=\lambda\in\La$. Then $h$ is given by left multiplication with $\lambda$.  
Thus, we obtain $f(x)=h(x)=\lambda x$ for any $x\in I$ and we have proved our claim. 
The second statement easily follows from the first one. 
\end{proof}

\begin{lemm}\label{bongartz}
Let $T$ be a support $\tau$-tilting ideal of $\La$. 
If $I_iT\neq T$, then we have $e_iT\notin\Fac((1-e_i)T)$. 
In particular, $T$ has a left mutation $\mu_{e_iT}^-(T)$.
\end{lemm}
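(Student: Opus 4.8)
The plan is to prove the statement by a direct computation with the two-sided ideal $I_iT$, using crucially that $T$, being a two-sided ideal, satisfies $\La T=T$, and that $I_iT$ is again a two-sided ideal, hence in particular a right $\La$-submodule of $T$.

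First I would record the right $\La$-module decomposition $T=e_iT\oplus(1-e_i)T$; by Lemma~\ref{decompose} the summand $e_iT$ is indecomposable or zero, and $(1-e_i)T$ is its direct complement. Using $\La T=T$ I rewrite
\[
I_iT=\La(1-e_i)\La T=\La(1-e_i)T.
\]
In particular $(1-e_i)T\subseteq I_iT$ (use the coefficient $1-e_i\in\La$), and applying $(1-e_i)\cdot$ together with $I_iT\subseteq T$ gives $(1-e_i)(I_iT)=(1-e_i)T$. I would also settle the degenerate case here: if $e_iT=0$ then $T=(1-e_i)T$, so $I_iT=\La(1-e_i)T=\La T=T$, contradicting the hypothesis $I_iT\neq T$; hence under that hypothesis $e_iT\neq 0$ is a genuine indecomposable summand of $T$.

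The core step is then to argue by contradiction: suppose $e_iT\in\Fac((1-e_i)T)$, so by definition of $\Fac$ there is a surjection $((1-e_i)T)^{m}\twoheadrightarrow e_iT$ for some finite $m$, with components $\phi_1,\dots,\phi_m\in\Hom_\La((1-e_i)T,e_iT)$ whose images jointly span $e_iT$. By the second assertion of Lemma~\ref{surje} (with $I=T$), each $\phi_k$ is left multiplication by some $\lambda_k\in e_i\La(1-e_i)$. Since $e_i\La(1-e_i)\subseteq I_i$, this yields
\[
e_iT=\sum_{k}\phi_k\big((1-e_i)T\big)\subseteq e_i\La(1-e_i)T\subseteq e_i(I_iT)\subseteq e_iT,
\]
so $e_i(I_iT)=e_iT$. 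Combining with $(1-e_i)(I_iT)=(1-e_i)T$ and decomposing the right submodule $I_iT$ along $1=e_i+(1-e_i)$ gives $I_iT=e_iT\oplus(1-e_i)T=T$, a contradiction. Hence $e_iT\notin\Fac((1-e_i)T)$.

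Finally, $T$ is a right support $\tau$-tilting $\La$-module (Definition~\ref{def spt}) having $e_iT$ as an indecomposable summand with complement $(1-e_i)T$, so the equivalence in Definition-Theorem~\ref{air mutation}(ii) upgrades $e_iT\notin\Fac((1-e_i)T)$ to the existence of the left mutation $\mu_{e_iT}^-(T)$. The only points requiring care are the bookkeeping with the idempotents $e_i,1-e_i$ and the interplay of left and right module structures — in particular the identities $I_iT=\La(1-e_i)T$ and $(1-e_i)(I_iT)=(1-e_i)T$, and checking $e_i\La(1-e_i)T\subseteq e_i(I_iT)$ — rather than any genuinely hard point.
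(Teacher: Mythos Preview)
Your proof is correct and follows essentially the same approach as the paper: assume $e_iT\in\Fac((1-e_i)T)$, use Lemma~\ref{surje} to realize the components of a surjection $((1-e_i)T)^m\twoheadrightarrow e_iT$ as left multiplications by elements of $e_i\La(1-e_i)\subseteq I_i$, and conclude $e_iT\subseteq e_iI_iT$, forcing $I_iT=T$. You add a bit more bookkeeping than the paper (the identity $I_iT=\La(1-e_i)T$, the explicit treatment of the case $e_iT=0$, and the verification $(1-e_i)I_iT=(1-e_i)T$), but the argument is the same in substance.
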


\begin{proof} 
By Lemma \ref{decompose} and the assumption, $e_iT\neq0$ is indecomposable. 
We assume that $e_iT\in\Fac((1-e_i)T)$. 
Then, there exist an integer $d>0$ and a surjective map  
$$\{(1-e_i)T\}^{d}\to e_iT.$$

By Lemma \ref{surje}, there exist $\lambda_m\in\La$ $(1\leq m\leq d)$ such that  
$$\sum_{m=1}^{d}e_i\lambda_m(1-e_i)\cdot(1-e_i)T= e_iT.$$ 

On the other hand, since $I_i=\La(1-e_i)\La$, we get 
$$e_i\La(1-e_i)\cdot(1-e_i)T=e_iI_iT.$$ 
Thus we have  
$$e_iT=\sum_{m=1}^{d}e_i\lambda_m(1-e_i)T\subset e_iI_iT,$$ 
which is a contradiction. 
The second statement given in this lemma follows from Definition-Theorem \ref{air mutation} (ii). 
\end{proof}

We also need to prove the following statement.

\begin{lemm}\label{approx}
Let $T$ be a support $\tau$-tilting ideal of $\La$. For the map $p_2$ in 
the sequence $(\ref{La I})$, 
the map  
$$\xymatrix@C30pt@R20pt{ e_i\La\otimes_\La T \ar[r]^{p_2\otimes_\La T\ \ \ \ \ \ \ \ \ }  & {\displaystyle\bigoplus_{\begin{smallmatrix}a\in \overline{Q}_1, e(a)=i\end{smallmatrix}} e_{s(a)}\La\otimes_\La T} }$$
 is a left $(\add((1-e_i)T))$-approximation.
\end{lemm}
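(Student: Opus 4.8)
The plan is to show that for every $a \in \overline{Q}_1$ with $e(a) = i$, the component $e_i\Lambda \otimes_\Lambda T \to e_{s(a)}\Lambda \otimes_\Lambda T$ of $p_2 \otimes_\Lambda T$ lands in the correct summand and, more importantly, that any map from $e_i\Lambda \otimes_\Lambda T$ to a module in $\add((1-e_i)T)$ factors through $p_2 \otimes_\Lambda T$. First I would simplify the source and targets: since $e_j\Lambda \otimes_\Lambda T \cong e_j T$ for each $j \in Q_0$, the map in question is really a map $e_i T \to \bigoplus_{a} e_{s(a)} T$ where the sum is over arrows $a$ in $\overline{Q}$ ending at $i$, hence $s(a) \neq i$ for all such $a$ (as $Q$, and therefore $\overline{Q}$, has no loops), so the target indeed lies in $\add((1-e_i)T)$. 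Here I should be slightly careful: the sum could contain repeated vertices $s(a)$, but that only enlarges $\add((1-e_i)T)$-membership, so it is harmless.

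The heart of the argument is the approximation property: given any $g \colon e_i T \to (1-e_j)T$-type module, more precisely any $g \colon e_i\Lambda \otimes_\Lambda T \to e_k T$ with $k \neq i$, I must produce a factorization through $p_2 \otimes_\Lambda T$. The key input is Lemma~\ref{surje}: any $\Lambda$-homomorphism $e_i T \to e_k T$ (with $k \neq i$, so $e_k = e_k(1-e_i)$) is given by left multiplication by an element of $e_k\Lambda(1-e_i) \subseteq e_k\Lambda e_i$, i.e. by an element of $\overline{Q}$-paths from $i$ to $k$. The map $p_2$ in $(\ref{La I})$ is, by construction of the minimal projective presentation of $S_i$, precisely right multiplication by the ``differential'' $\sum_{a : e(a)=i}$ of the arrows adjacent to $i$; so any element of $e_k\Lambda e_i$ with $k \neq i$, being a path through some neighbour of $i$, factors (as a right-multiplication map $e_i\Lambda \to e_k\Lambda$) through one of the components $e_i\Lambda \to e_{s(a)}\Lambda$ of $p_2$. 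Tensoring this factorization with $-\otimes_\Lambda T$ and using Lemma~\ref{surje} to realize $g$ as such a multiplication map gives the desired factorization through $p_2 \otimes_\Lambda T$.

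I expect the main obstacle to be the bookkeeping in identifying $p_2$ concretely and checking that every path from $i$ to $k \neq i$ in $\overline{Q}$ genuinely passes through the first arrow encoded in $p_2$ — that is, that $p_2$ is a \emph{right} projective cover of $e_i I_i = \Im p_1$ in the appropriate sense, so that its components $e_i\Lambda \to e_{s(a)}\Lambda$ collectively detect all maps out of $e_i\Lambda$ into projectives $e_k\Lambda$ with $k \neq i$. This is really the statement that $\mathrm{rad}(e_i\Lambda) = e_i I_i$ together with the fact that the given presentation is minimal, so $\bigoplus_a e_{s(a)}\Lambda \to e_i I_i$ is a projective cover; applying the right exact functor $-\otimes_\Lambda T$ preserves the relevant surjectivity, and then one only needs that $g$ maps into $\mathrm{rad}$ of its target (automatic since $\Hom_\Lambda(e_i T, \Top(e_k T)) = 0$ for $k \neq i$ by comparing tops, using that $e_i T$ has top a sum of copies of $S_i$). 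Once that radical-factorization is in place, minimality makes the factorization through $p_2 \otimes_\Lambda T$ routine, and this finishes the proof. Finally, I would remark that this lemma is exactly what is needed to apply Definition-Theorem~\ref{air mutation}(iii) to compute the left mutation $\mu_{e_i T}^-(T)$ furnished by Lemma~\ref{bongartz}, which is presumably the next step in the paper.
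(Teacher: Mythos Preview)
Your core strategy is correct and closely parallels the paper's proof: both hinge on Lemma~\ref{surje} (to reduce arbitrary maps $e_iT\to(1-e_i)T$ to left multiplication by elements of $\Lambda$) together with the surjectivity of $\Hom_\Lambda(p_2,(1-e_i)\Lambda)$. The paper packages this via the tensor--Hom adjunction $\Hom_\Lambda(e_j\Lambda\otimes_\Lambda T,-)\cong\Hom_\Lambda(e_j\Lambda,\Hom_\Lambda(T,-))$, rewriting the approximation condition as surjectivity of $\Hom_\Lambda(p_2,(1-e_i)E)$ where $E=\End_\Lambda(T)$, and then uses the surjection $g:\Lambda\twoheadrightarrow E$ from Lemma~\ref{surje} together with surjectivity of $\Hom_\Lambda(p_2,(1-e_i)\Lambda)$ in a commutative square. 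Your element-by-element version (realize the given map as $\lambda\cdot$, factor $\lambda\cdot$ through $p_2$ at the level of projectives, then apply $-\otimes_\Lambda T$) is the same argument unraveled.

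However, your justification of the key step --- that any $\lambda\cdot:e_i\Lambda\to e_k\Lambda$ with $k\neq i$ factors through $p_2$ --- goes astray. The ``main obstacle'' paragraph contains several errors. First, you write that $\bigoplus_a e_{s(a)}\Lambda\to e_iI_i$ is a projective cover and try to exploit this, but that map is $\pi$ (equivalently $p_1$), not $p_2$; the map $p_2$ goes \emph{into} a projective. Second, your claim that $\Top(e_iT)$ is a sum of copies of $S_i$ is false in general: for $T=I_i$ one has $e_iT=\rad(e_i\Lambda)$, whose top is $\bigoplus_{a:e(a)=i}S_{s(a)}$. Third, $p_2$ is given by \emph{left} multiplication, not right, since these are maps of right $\Lambda$-modules; and the inclusion ``$e_k\Lambda(1-e_i)\subseteq e_k\Lambda e_i$'' is garbled. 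The path-based heuristic (``every path from $i$ to $k$ passes through an arrow encoded in $p_2$'') is suggestive but does not by itself yield a factorization through the specific map $p_2$.

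The clean argument, implicit in the paper's invocation of sequence~(\ref{simple sigma resolution}), is this: $\Ker p_2=S_{\sigma(i)}=\Soc(e_i\Lambda)$. For $k\neq i$ the module $e_k\Lambda$ has socle $S_{\sigma(k)}\not\cong S_{\sigma(i)}$, so $\Hom_\Lambda(S_{\sigma(i)},e_k\Lambda)=0$ and every map $e_i\Lambda\to e_k\Lambda$ kills $\Ker p_2$, hence factors through $\Im p_2$. Since $\Lambda$ is selfinjective, $e_k\Lambda$ is injective and this map extends from $\Im p_2$ to $\bigoplus_a e_{s(a)}\Lambda$. This single observation replaces your radical/projective-cover discussion and, combined with Lemma~\ref{surje} and tensoring with $T$ as you outlined, completes the proof.
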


\begin{proof}
We will show that the map  
\begin{eqnarray}\label{surjection}
\ \ \ \ \  \Hom_\La(\bigoplus_{\begin{smallmatrix}a\in \overline{Q}_1, e(a)=i\end{smallmatrix}} e_{s(a)}\La\otimes_\La T,(1-e_i)T)\overset{(p_2\otimes_\La T,(1-e_i)T)}{\longrightarrow}
\Hom_\La(e_i\La\otimes_\La T,(1-e_i)T)
\end{eqnarray}
is surjective.

Let $E:=\End_\La(T)$. 
Then, we have 
\begin{eqnarray*}
\Hom_\La(e_i\La\otimes_\La T,(1-e_i)T) &\cong&
\Hom_\La(e_i\La,\Hom_\La(T,(1-e_i)T))\\
&\cong& \Hom_\La(e_i\La,(1-e_i)E), 
\end{eqnarray*}
and similarly $$\Hom_\La(\bigoplus_{\begin{smallmatrix}a\in \overline{Q}_1, e(a)=i\end{smallmatrix}} e_{s(a)}\La\otimes_\La T,(1-e_i)T)\cong\Hom_\La(\bigoplus_{\begin{smallmatrix}a\in \overline{Q}_1, e(a)=i\end{smallmatrix}} e_{s(a)}\La,(1-e_i)E).$$ 
Then, from the functoriality, 
we write the map (\ref{surjection}) as follows.
\begin{eqnarray*}\Hom_\La(\bigoplus_{\begin{smallmatrix}a\in \overline{Q}_1, e(a)=i\end{smallmatrix}} e_{s(a)}\La,(1-e_i)E)\overset{(p_2,(1-e_i)E)}{\longrightarrow}
\Hom_\La(e_i\La,(1-e_i)E).
\end{eqnarray*}

On the other hand, by Lemma \ref{surje}, there exists a surjective $\La$-module map  
$$g:\La\longrightarrow E.$$
Then, we have the following commutative diagram

\[\xymatrix@C80pt@R40pt{\Hom_\La({\displaystyle\bigoplus_{\begin{smallmatrix}a\in \overline{Q}_1, e(a)=i\end{smallmatrix}} e_{s(a)}\La},(1-e_i)\La)\ar@{>>}[r]^{\ \ \ \ \ \ \ \ \ (p_2,(1-e_i)\La)}\ar@{>>}[d]^{(\bigoplus_{\begin{smallmatrix}a\in \overline{Q}_1, e(a)=i\end{smallmatrix}} e_{s(a)}\La,\ g)}&\Hom_\La(e_i\La,(1-e_i)\La)\ar@{>>}[d]^{(e_i\La,\ g)}\\
\Hom_\La({\displaystyle\bigoplus_{\begin{smallmatrix}a\in \overline{Q}_1, e(a)=i\end{smallmatrix}} e_{s(a)}\La},(1-e_i)E)\ar[r]^{\ \ \ \ \ \ \ \ \ (p_2,(1-e_i)E)}&\Hom_\La(e_i\La,(1-e_i)E).}
\]

It is clear that $\Hom_\La(\bigoplus_{\begin{smallmatrix}a\in \overline{Q}_1, e(a)=i\end{smallmatrix}} e_{s(a)}\La,g)$ and $\Hom_\La(e_i\La,g)$ are surjective since $e_{s(a)}\La$ and $e_i\La$ are projective. Moreover, by the sequence (\ref{simple sigma resolution}), $\Hom_\La(p_2,(1-e_i)\La)$ is also surjective. Consequently, $\Hom_\La(p_2,(1-e_i)E)$ is also surjective. 
Then the statement follows from the additivity of the functors. 
\end{proof}

Now we apply the above results to the following key proposition.

\begin{prop}\label{TI}
Let $T\in\langle I_1,\ldots,I_n\rangle$ and assume that $T$ is a basic support $\tau$-tilting ideal of $\La$. 
Then $I_iT$ is a basic support $\tau$-tilting $\Lambda$-module.
\end{prop}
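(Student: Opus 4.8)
The plan is to verify that $I_iT$ is a support $\tau$-tilting module by distinguishing the two cases $I_iT = T$ and $I_iT \neq T$, and in the nontrivial case by realizing $I_iT$ as a left mutation $\mu_{e_iT}^-(T)$ and applying Definition-Theorem \ref{air mutation}(iii). First I would dispose of the trivial case: if $I_iT = T$, then $I_iT = T$ is itself a basic support $\tau$-tilting ideal by hypothesis, and there is nothing to prove. So assume $I_iT \neq T$. By Lemma \ref{bongartz}, this gives $e_iT \notin \Fac((1-e_i)T)$ and hence, via Definition-Theorem \ref{air mutation}(ii), the left mutation $\mu_{e_iT}^-(T)$ exists. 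Note $e_iT \neq 0$ is indecomposable by Lemma \ref{decompose}, and $T = (1-e_i)T \oplus e_iT$ is a direct sum decomposition; since $T$ is $\tau$-tilting over $\La/\langle e\rangle$ for the appropriate idempotent $e$, we are in position to run part (iii) of the mutation theorem with $X = e_iT$ and $U = (1-e_i)T$.

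The core computation is to identify the left $(\add U)$-approximation of $X = e_iT$ and its cokernel. Tensoring the defining sequence (\ref{La I}) with $T$ over $\La$ gives the map
$$e_i\La \otimes_\La T \xrightarrow{\ p_2 \otimes_\La T\ } \bigoplus_{\begin{smallmatrix}a\in \overline{Q}_1,\, e(a)=i\end{smallmatrix}} e_{s(a)}\La \otimes_\La T,$$
which by Lemma \ref{approx} is a left $(\add((1-e_i)T))$-approximation. Here I would use that, since $e_i\La \otimes_\La T \cong e_iT$ and $e_{s(a)}\La \otimes_\La T \cong e_{s(a)}T$ are the rows of $T$, the source of this approximation is exactly $e_iT = X$ and the target lies in $\add((1-e_i)T)$ (each $e_{s(a)}$ with $s(a)$ an endpoint of an arrow into $i$ satisfies $s(a) \neq i$ since $Q$ is acyclic, and $e_{s(a)}T$ is a summand of $(1-e_i)T$). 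Then the image of $p_2 \otimes_\La T$ is precisely $e_iI_iT$: indeed, by Corollary \ref{tensor} (applicable since $I_iT \neq T$) we have $T \otimes_\La I_i \cong TI_i$, and dually on the left side tensoring (\ref{La I}) computes $e_iI_i \otimes_\La T$, whose image in $\bigoplus e_{s(a)}T$ recovers $e_iI_iT$ — so the cokernel $Y$ of the approximation fits into $e_iT \to \bigoplus e_{s(a)}T \to Y \to 0$ with the relevant submodule being $e_iI_iT$. By Definition-Theorem \ref{air mutation}(iii), $\mu_{e_iT}^-(T) = Y_1 \oplus (1-e_i)T$ (or $(1-e_i)T$ if $Y = 0$) is a basic (support) $\tau$-tilting module. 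It then remains to match this with $I_iT$: we have $I_iT = (1-e_i)I_iT \oplus e_iI_iT$ as a left-module decomposition, and $(1-e_i)I_iT = (1-e_i)T$ because applying $1-e_i$ to the inclusion $I_iT \subseteq T$ and using Lemma \ref{str of I} (the quotient $T/I_iT$ is $S_i$-isotypic, hence killed by $1-e_i$) yields $(1-e_i)I_iT = (1-e_i)T$; meanwhile $e_iI_iT$ is the indecomposable module appearing as $Y_1$, so $I_iT \cong \mu_{e_iT}^-(T)$.

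The main obstacle I expect is the bookkeeping in the middle step: correctly identifying the cokernel $Y$ of the approximation map $p_2 \otimes_\La T$ with (a power of) $e_iI_iT$, and confirming that $(1-e_i)I_iT = (1-e_i)T$ so that $I_iT$ and $\mu_{e_iT}^-(T)$ genuinely have the same indecomposable summands. This requires carefully tracking how $I_i = \La(1-e_i)\La$ acts after tensoring and invoking Lemma \ref{str of I} to control the $S_i$-isotypic quotient $T/I_iT$; the freeness/projectivity inputs (so that $\otimes_\La T$ on (\ref{La I}) behaves well and Corollary \ref{tensor} applies) must be in place, which is exactly where the hypothesis $I_iT \neq T$ is used. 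Once these identifications are secured, that $I_iT$ is \emph{basic} follows since $\mu_{e_iT}^-(T)$ is basic by Definition-Theorem \ref{air mutation}(iii), and being a support $\tau$-tilting \emph{ideal} (two-sided, left and right) will be addressed together with the $\langle I_1,\ldots,I_n\rangle$-stability in the subsequent inductive proof of Theorem \ref{main1}.
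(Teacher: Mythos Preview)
Your overall strategy matches the paper's: treat $I_iT=T$ trivially, otherwise invoke Lemma~\ref{bongartz} and Lemma~\ref{approx}, identify the cokernel of $p_2\otimes_\La T$ with $e_iI_iT$ via (the left-module version of) Corollary~\ref{tensor}, and conclude by Definition-Theorem~\ref{air mutation}(iii). However there is a genuine gap.

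Definition-Theorem~\ref{air mutation}(iii) requires the left $(\add U)$-approximation $f$ to be \emph{minimal}, and Lemma~\ref{approx} only gives you a left approximation, not a minimal one. You never address this. The issue is not cosmetic: if $p_2\otimes_\La T$ failed to be minimal, then its cokernel $e_iI_i\otimes_\La T\cong e_iI_iT$ would have to split off a nonzero summand lying in $\add((1-e_i)T)$. Since $e_iI_iT$ is indecomposable (Lemma~\ref{decompose}), the only way this can happen is if $e_iI_iT$ itself belongs to $\add((1-e_i)T)$; in that case the minimal approximation would be surjective, $\mu_{e_iT}^-(T)\cong(1-e_i)T$, and your identification $I_iT\cong\mu_{e_iT}^-(T)$ would break down (indeed $I_iT=(1-e_i)T\oplus e_iI_iT$ would not even be basic). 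The paper rules this out by observing that, since $\La$ is selfinjective, the modules $e_jT$ for $j\in Q_0$ have pairwise non-isomorphic simple socles; hence $e_iI_iT$ (when nonzero) cannot be isomorphic to any $e_{s(a)}T$, so $\pi\otimes_\La T$ is a radical map and $p_2\otimes_\La T$ is left minimal. You need this step.

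A smaller point: what you call ``the image of $p_2\otimes_\La T$'' is in fact its \emph{cokernel}; the sequence obtained from (\ref{La I}) by tensoring is $e_iT\to\bigoplus e_{s(a)}T\to e_iI_i\otimes_\La T\to 0$, and it is the rightmost term that is identified with $e_iI_iT$ via Corollary~\ref{tensor} (applied to $T$ as a left module, using the hypothesis $I_iT\neq T$). Your surrounding discussion of ``image in $\bigoplus e_{s(a)}T$'' and ``the relevant submodule'' is tangled; once you fix the minimality gap the argument is exactly the paper's.
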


\begin{proof}
There is nothing to show if $I_iT= T$. 
Assume that $I_iT\neq T$.
Then, by Lemma \ref{bongartz}, there exists a left mutation $\mu^-_{e_iT}(T)$.

Now let $e$ be an idempotent of $\La$ such that $T$ is a $\tau$-tilting $(\Lambda/\langle e\rangle)$-module. 
By applying the functor $-\otimes_\La T$ to (\ref{La I}), 
we get the following exact sequence of $(\Lambda/\langle e\rangle)$-module

\begin{eqnarray*}\label{mut sequence}
 e_i\La\otimes_\La T\overset{p_2\otimes_\La T}{\longrightarrow} \bigoplus_{\begin{smallmatrix}a\in \overline{Q}_1, e(a)=i\end{smallmatrix}} e_{s(a)}\La\otimes_\La T\overset{\pi\otimes_\La T}{\longrightarrow} e_iI_i\otimes_\La T\longrightarrow0.
\end{eqnarray*}

By applying Corollary \ref{tensor} to the support $\tau$-tilting left $\La$-module $T$, we have $e_iI_i\otimes_\La T\cong e_iI_iT$. Moreover, Lemma \ref{decompose} implies that $e_iI_iT$ is indecomposable. 
On the other hand, from Lemma \ref{approx}, $p_2\otimes_\La T$ is a left $(\add((1-e_i)T))$-approximation.

If $e_iI_iT=0$, then $p_2\otimes_\La T$ is clearly left minimal. 
Assume $e_iI_iT\neq0$. Since $\La$ is selfinjective, 
$e_jT$ has a simple socle for any $j\in Q_0$ and these simple modules are mutually non-isomorphic. Hence $e_{s(a)}\La\otimes_\La T$ and $e_iI_iT$ are not isomorphic for any $a\in \overline{Q}_1$ such that $e(a)=i$.
Thus, $\pi\otimes_\La T$ is a radical map and hence $p_2\otimes_\La T$ is left minimal. 
Then, applying Definition-Theorem \ref{air mutation} (iii), we conclude that $\mu^-_{e_iT}(T)\cong e_iI_i T\oplus(1-e_i)T=I_iT$ is a basic support $\tau$-tilting $(\Lambda/\langle e\rangle)$-module. 
\end{proof}

Now we are ready to prove Theorem \ref{main1}.

\begin{proof}[Proof of Theorem \ref{main1}]
We use induction on the number of products of the ideal $I_i$, $i\in Q_0$. 
By Lemma \ref{I_i}, $I_i$ is a basic support $\tau$-tilting ideal of $\La$ for any $i\in Q_0$. 
Assume that $I_{i_1}\cdots I_{i_k}$ is a basic support $\tau$-tilting ideal for $i_1,\ldots,i_k\in Q_0$ and $k>1$. 
By Proposition \ref{TI}, $I_{i_{0}}(I_{i_1}\cdots I_{i_k})$ is a basic support $\tau$-tilting $\La$-module for any  vertex $i_{0}\in Q_0$. 
Similarly we can show that it is a basic support $\tau$-tilting left $\La$-module. 
\end{proof}

At the end of this subsection, we give the following lemma for later use.

\begin{lemm}\label{left mut}
Let $T\in\langle I_1,\ldots,I_n\rangle$. 
If $I_iT\neq T$, then there is a left mutation of $T:$ 
$$\mu^-_{e_iT}(T)\cong I_iT.$$
\end{lemm}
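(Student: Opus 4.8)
The plan is to reduce this statement to what has already been assembled in Proposition \ref{TI}. By Theorem \ref{main1} (or rather the inductive structure used to prove it), any $T \in \langle I_1,\ldots,I_n\rangle$ is a basic support $\tau$-tilting ideal of $\La$, so the hypotheses of Proposition \ref{TI} are met. Thus I would start by invoking Proposition \ref{TI}: since $I_iT \neq T$, tracing through its proof already produces the conclusion, because the whole content of that proof is to identify the left mutation $\mu^-_{e_iT}(T)$ explicitly. Concretely, Lemma \ref{bongartz} guarantees $e_iT \notin \Fac((1-e_i)T)$, hence by Definition-Theorem \ref{air mutation}(ii) the left mutation $\mu^-_{e_iT}(T)$ exists.

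The key step is then to re-run the approximation argument from Proposition \ref{TI}. Applying $-\otimes_\La T$ to the exact sequence (\ref{La I}) yields
\[
e_i\La\otimes_\La T \overset{p_2\otimes_\La T}{\longrightarrow} \bigoplus_{\begin{smallmatrix}a\in \overline{Q}_1,\, e(a)=i\end{smallmatrix}} e_{s(a)}\La\otimes_\La T \overset{\pi\otimes_\La T}{\longrightarrow} e_iI_i\otimes_\La T \longrightarrow 0,
\]
and Corollary \ref{tensor} (valid since $I_iT \neq T$ forces $TI_i \neq T$ up to the left/right symmetry, i.e. applying Corollary \ref{tensor} to $T$ as a left module) identifies $e_iI_i\otimes_\La T \cong e_iI_iT$, which is indecomposable by Lemma \ref{decompose}. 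Lemma \ref{approx} says $p_2\otimes_\La T$ is a left $(\add((1-e_i)T))$-approximation, and the selfinjectivity/simple-socle argument shows $\pi\otimes_\La T$ is a radical map, so $p_2\otimes_\La T$ is left minimal. Then Definition-Theorem \ref{air mutation}(iii) gives $\mu^-_{e_iT}(T) \cong e_iI_iT \oplus (1-e_i)T = I_iT$, which is exactly the claim.

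The main obstacle — really the only point requiring care — is the careful bookkeeping of left versus right modules: $T$ is simultaneously a left and right support $\tau$-tilting $\La$-module, and the mutation $\mu^-_{e_iT}$ is taken in $\mod\La$ (right modules), while Corollary \ref{tensor} must be applied to $T$ viewed as a left $\La$-module to get ${\Tor}^\La_1(T,S_i)=0$ and hence $e_iI_i\otimes_\La T \cong e_iI_iT$. Since all of this is already done inside the proof of Proposition \ref{TI}, the cleanest presentation is simply to observe that the isomorphism $\mu^-_{e_iT}(T)\cong I_iT$ is precisely what that proof establishes, so this lemma is just an explicit restatement of the identification obtained there.
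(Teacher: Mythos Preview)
Your proposal is correct and follows exactly the paper's approach: the paper's proof is the single line ``This follows from Lemma \ref{bongartz} and Proposition \ref{TI},'' and your plan simply unpacks that citation, arriving at the same observation that the identification $\mu^-_{e_iT}(T)\cong I_iT$ is precisely what the proof of Proposition \ref{TI} establishes. Your extra remark about the left/right bookkeeping in applying Corollary \ref{tensor} is accurate (and indeed is handled inside Proposition \ref{TI}'s proof), so there is nothing to add.
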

 
\begin{proof}
This follows from Lemma \ref{bongartz} and Proposition \ref{TI}. 
\end{proof}

Now we show examples of support $\tau$-tilting modules and their mutations.

\begin{exam}\label{exam1}
(a) Let $\La$ be the preprojective algebra of type $A_2$.  
In this case, $\HH(\sttilt\La)$ is given as follows.

\[\xymatrix@C20pt@R20pt{&{\begin{smallmatrix}1\\ 2\end{smallmatrix}}{\begin{smallmatrix}\\ 1\end{smallmatrix}}\ar[r]^{I_1}&{\begin{smallmatrix}\\ 1\end{smallmatrix}}\ar[rd]^{I_2}&\\
{\begin{smallmatrix}1\\ 2\end{smallmatrix}}{\begin{smallmatrix}2\\1\end{smallmatrix}}\ar[ur]^{I_2}\ar[dr]_{I_1}&&&{\begin{smallmatrix}\\ 0\end{smallmatrix}}\\
&{\begin{smallmatrix}2\\ \end{smallmatrix}}{\begin{smallmatrix}2\\ 1\end{smallmatrix}}\ar[r]_{I_2}&{\begin{smallmatrix}\\ 2\end{smallmatrix}}\ar[ru]_{I_1}&}\]

Here we represent modules by their radical filtrations and we write a direct sum $X\oplus Y$ by $X\ Y$. 
Note that $I_i$ denotes a left multiplication.

(b) Let $\La$ be the preprojective algebra of type $A_3$. 
In this case, $\HH(\sttilt\La)$ is given as follows

\[\xymatrix@C15pt@R10pt{
&   &   &   &   &    {\begin{smallmatrix} && \\ &1 &2 \\&2 &1  \end{smallmatrix}}  \ar[rrd]\ar[rrddd]   &    &   &   &   &     \\ 
&   &   & {\begin{smallmatrix}1 && \\2 &1 &2 \\3&2 &1  \end{smallmatrix}}\ar[rru]\ar[rrd]   &   &         &    & {\begin{smallmatrix} && \\ &1 & \\&2 &1  \end{smallmatrix}}\ar[rrdd]   &   &   &     \\ 
&   &   &   &   &    {\begin{smallmatrix} 1&& \\2 &1 & \\3&2 & 1 \end{smallmatrix}} \ar[rru]     &    &   &   &   &     \\ 
&  {\begin{smallmatrix}1 &2& \\ 2&31 &2 \\3&2 &1  \end{smallmatrix}} \ar[rruu]\ar[rrdd] &   & {\begin{smallmatrix} 1&& \\2 &31 & \\3&2 &1  \end{smallmatrix}}\ar[rru]\ar[rrddd]   &   &         &    & {\begin{smallmatrix} & \\  &2 \\2 &1  \end{smallmatrix}}\ar[rrdd]   &   & {\begin{smallmatrix} 1  \end{smallmatrix}}\ar[rdd]   &     \\ 
&   &   &   &   &   {\begin{smallmatrix} && \\2 & &2 \\3& 2&1  \end{smallmatrix}}\ar[rru]\ar[rrddd]       &    &   &   &   &     \\ 
{\begin{smallmatrix} 1&2&3 \\ 2&31 &2 \\3&2 &1  \end{smallmatrix}}\ar[uur]\ar[rdd]\ar[r] &  {\begin{smallmatrix} 1&&3 \\ 2&31 & 2\\3&2 &1  \end{smallmatrix}}\ar[rruu]\ar[rrdd]  &   &{\begin{smallmatrix} &2& \\ 2&13 &2 \\3&2 &1  \end{smallmatrix}}\ar[rru] &   &         &    & {\begin{smallmatrix} 3 &1  \end{smallmatrix}}\ar[rruu]\ar[rrdd]   &   & {\begin{smallmatrix}2  \end{smallmatrix}}\ar[r]   & {\begin{smallmatrix} 0\end{smallmatrix}}     \\ 
&   &   &   &   &    {\begin{smallmatrix} && \\ & 31& \\3&2 &1  \end{smallmatrix}}\ar[rru]      &    &   &   &   &     \\ 
&{\begin{smallmatrix} &2& 3\\ 2&13 &2 \\3&2 &1  \end{smallmatrix}}\ar[rruu]\ar[rrdd] &   & {\begin{smallmatrix} && 3\\ &31 &2 \\3&2 &1  \end{smallmatrix}}\ar[rru]\ar[rrddd]   &   &         &    & {\begin{smallmatrix} & \\ 2&  \\3&2  \end{smallmatrix}}\ar[rruu]   &   & {\begin{smallmatrix} 3 \end{smallmatrix}}\ar[ruu]   &     \\ 
&   &   &   &   &{\begin{smallmatrix} & \\ 2&3  \\3&2   \end{smallmatrix}} \ar[rru]\ar[rrd]     &    &   &   &   &     \\ 
&   &   & {\begin{smallmatrix} &&3 \\ 2& 3&2 \\3&2 &1  \end{smallmatrix}} \ar[rru]\ar[rrd] &   &         &    &{\begin{smallmatrix} & \\ & 3 \\3&2   \end{smallmatrix}}\ar[rruu]   &   &   &   \\
&   &   &   &   &   {\begin{smallmatrix} &&3 \\ &3 & 2\\3&2 &1  \end{smallmatrix}}\ar[rru]      &    &   &   &   &  
  }\] 
\end{exam}

In these two examples, $\HH(\sttilt\La)$ consists of a finite connected component. 
We will show that this is the case for preprojective algebras of Dynkin type in the sequel. Thus, all support $\tau$-tilting modules can be obtained by mutations from $\La$.


\subsection{Bijection between support $\tau$-tilting modules and the Weyl group}\label{bijection with the Weyl group}
In the previous subsection, we have shown that elements of $\langle I_1,\ldots,I_n\rangle$ are basic support $\tau$-tilting ideals. In this subsection, we will show that any basic support $\tau$-tilting $\La$-module is given as an element of $\langle I_1,\ldots,I_n\rangle$. 
Moreover, we show that there exists a bijection between the Weyl group and basic support $\tau$-tilting $\La$-modules, which implies that there are only finitely many basic support $\tau$-tilting $\La$-modules. 

First we recall the following definition.

\begin{defi}
Let $Q$ be a finite connected acyclic quiver with vertices $Q_0=\{1,\ldots, n\}$. 
The \emph{Coxeter group} $W_Q$ associated to $Q$ is defined by the generators $s_1,\ldots , s_n$ and relations
\begin{itemize}
\item[$\bullet$] $s_i^2=1$,
\item[$\bullet$] $s_is_j=s_js_i$ if there is no arrow between $i$ and $j$ in $Q$,
\item[$\bullet$] $s_is_js_i=s_js_is_j$ if there is precisely one arrow between $i$ and $j$ in $Q$.
\end{itemize}

Each element $w\in W_Q$ can be written in the form $w=s_{i_1}\cdots s_{i_k}.$
If $k$ is minimal among all such expressions for $w$, then $k$ is called the \emph{length} of $w$ and we denote by $l(w)=k$. In this case, we call $s_{i_1}\cdots s_{i_k}$ a \emph{reduced expression} of $w$. 
\end{defi} 

In particular $W_Q$ is the \emph{Weyl group} if $Q$ is Dynkin. 
Then, we give the following important result. 

\begin{theorem}\label{main2}
Let $Q$ be a finite connected acyclic quiver with vertices $Q_0=\{1,\ldots, n\}$. 
There exists a bijection $W_Q\to\langle I_1,\ldots,I_n\rangle$. 
It is given by $w\mapsto I_w =I_{i_1}I_{i_2}\cdots I_{i_k}$ for any reduced 
expression $w=s_{i_1}\cdots s_{i_k}$.
\end{theorem}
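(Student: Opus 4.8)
The plan is to construct a map $\langle I_1,\ldots,I_n\rangle \to W_Q$ and show it is a well-defined two-sided inverse to $w \mapsto I_w$. The map $W_Q \to \langle I_1,\ldots,I_n\rangle$ is certainly well-defined as a map from \emph{words} in the $s_i$; the content is (1) that $I_{i_1}\cdots I_{i_k}$ depends only on the element $w$, not on the chosen reduced expression, and (2) that the resulting map is injective. For (1) I would exploit that $\La$ is the preprojective algebra and that the ideals $I_i$ satisfy the same braid-type relations as the generators $s_i$: namely $I_iI_i = I_i$ (since $I_i = \La(1-e_i)\La$ is idempotent as an ideal), $I_iI_j = I_jI_i$ when there is no arrow between $i$ and $j$, and $I_iI_jI_i = I_jI_iI_j$ when there is exactly one arrow between $i$ and $j$. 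By Matsumoto's theorem (any two reduced expressions of $w$ are connected by braid moves), these relations among the $I_i$ together with $I_i^2 = I_i$ imply that $I_w := I_{i_1}\cdots I_{i_k}$ is independent of the reduced expression; moreover for \emph{any} (not necessarily reduced) expression one gets the same ideal, so the word map factors through $W_Q$ and the map is surjective onto $\langle I_1,\ldots,I_n\rangle$.

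For injectivity, the natural tool is the support $\tau$-tilting structure established in Theorem \ref{main1}: each $I_w$ is a basic support $\tau$-tilting module, and by Lemma \ref{left mut}, whenever $I_iI_w \neq I_w$ the module $I_iI_w = \mu^-_{e_iI_w}(I_w)$ is a \emph{left} mutation of $I_w$, hence strictly smaller in the partial order $\leq$ on $\sttilt\La$ (equivalently, lies strictly below $I_w$ in the Hasse quiver $\HH(\sttilt\La)$ by Theorem \ref{Hasse-mut}). Thus each time we multiply by a new $I_i$ on the left we either stay put ($I_iI_w = I_w$) or move strictly down. I would then show: $I_iI_w \neq I_w$ if and only if $l(s_iw) > l(w)$, i.e. left multiplication by $I_i$ genuinely changes the ideal exactly when $s_i$ is not a left descent of $w$. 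Granting this, if $w = s_{i_1}\cdots s_{i_k}$ is reduced then each partial product strictly increases length, so each corresponding ideal strictly decreases in $\sttilt\La$; this already recovers $l(w)$ from $I_w$ as the distance from $\La$ in the Hasse quiver, and a standard induction on $l(w)$ comparing the left descent sets (read off from which $I_i$ fix $I_w$) then recovers $w$ itself from $I_w$, giving injectivity.

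The main obstacle is the equivalence ``$I_iI_w \neq I_w \iff l(s_iw) > l(w)$'', or more precisely its harder direction: if $l(s_iw) < l(w)$, i.e. $w$ has a reduced expression starting with $s_i$, then I must show $I_iI_w = I_w$, which reduces via $I_i^2 = I_i$ to the claim that $I_w$ is genuinely unchanged — this is where the idempotency $I_i^2 = I_i$ and the braid relations for the $I_i$ are used in earnest. The converse direction (if $l(s_iw) > l(w)$ then $I_iI_w \neq I_w$) is where the support $\tau$-tilting machinery bites: if $I_iI_w = I_w$ one would get, by the strict-descent observation above applied along a reduced expression of $s_iw$, a contradiction with $l(s_iw) = l(w)+1 > $ (distance of $I_w$ from $\La$), using that the $I_i$-action on $\sttilt\La$ can never increase the partial order when it changes the module. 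So the real work is: first, verifying the braid relations $I_iI_jI_i = I_jI_iI_j$ for the preprojective algebra (this is where I expect to consult or reprove a computation about $\La$, cf. the analogous non-Dynkin statements in \cite{BIRS}); and second, packaging the mutation/partial-order bookkeeping so that length in $W_Q$ is matched precisely with depth in $\HH(\sttilt\La)$. I would organize the final argument as an induction on $l(w)$, using Lemma \ref{left mut} and Theorem \ref{Hasse-mut} at each step and appealing to Theorem \ref{main1} to know we stay inside $\sttilt\La$.
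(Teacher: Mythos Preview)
Your plan for well-definedness and surjectivity is fine and matches the paper: the braid relations $I_iI_jI_i=I_jI_iI_j$ and $I_i^2=I_i$ are exactly what \cite[Theorem III.1.9]{BIRS} provides (the Dynkin assumption is not used there), and Matsumoto's theorem then gives a well-defined surjection $W_Q\to\langle I_1,\ldots,I_n\rangle$.

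The gap is in your injectivity argument. You need the equivalence ``$I_iI_w\neq I_w \iff l(s_iw)>l(w)$'', and the direction $\Leftarrow$ is the hard one. Your sketched proof of it is circular: you argue that if $I_iI_w=I_w$ then $I_{s_iw}=I_w$, and seek a contradiction with ``$l(s_iw)=l(w)+1>$ (distance of $I_w$ from $\La$ in $\HH(\sttilt\La)$)''. But all you know a priori is that the distance from $\La$ to $I_w$ is \emph{at most} $l(w)$ (each $I_i$-multiplication is weakly decreasing by Lemma \ref{left mut}); turning this into an equality requires precisely that every step along a reduced expression is a strict descent, which is the implication you are trying to prove. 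Likewise, reading off the left descent set of $w$ as $\{i:I_iI_w=I_w\}$ already presupposes both directions of the equivalence. Note that the paper itself establishes this equivalence only \emph{after} Theorem \ref{main2}, as Corollary \ref{equivalent}, and the proof there explicitly invokes Theorem \ref{main2} for the strict inequality $I_{s_iw}\subsetneq I_w$; so you cannot simply cite it.

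The paper's route for injectivity is entirely different and sidesteps this issue: embed $Q$ in an extended Dynkin quiver $\widetilde{Q}$ with preprojective algebra $\wLa$, observe that $I_w=\I_w/\langle e_0\rangle$ for $w\in W_Q\subset W_{\widetilde{Q}}$, and deduce $I_w=I_{w'}\Rightarrow \I_w=\I_{w'}\Rightarrow w=w'$ from the known non-Dynkin case \cite[Theorem III.1.9]{BIRS}. This lifting trick is short and uses nothing from the $\tau$-tilting side. If you want to repair your approach, you would need an independent argument that $I_iI_w=I_w$ forces $\mu_i(I_w,P_w)$ to be a \emph{right} mutation whose target is again in $\langle I_1,\ldots,I_n\rangle$ and of strictly smaller length; but identifying that target without already knowing injectivity (or Theorem \ref{spt contain}, which comes later) is exactly the missing step.
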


The statement is given in \cite[III.1.9]{BIRS} for non-Dynkin quivers. The same result holds for 
Dynkin quivers. 
For the convenience of the reader, we give a complete proof of Theorem \ref{main2} for Dynkin quivers.

For a proof, we provide the following set-up. 

\begin{notation}\label{notation}
Let $\widetilde{Q}$ be an extended Dynkin quiver obtained from $Q$ by adding a vertex $0$ (i.e. $\widetilde{Q}_0=\{0\}\cup Q_0$) and the associated arrows. 
We denote by $\wLa$ the completion of the associated preprojective algebra and $\I_i:=\wLa(1-e_i)\wLa$ for $i\in\widetilde{Q}$. 
For each $w\in W_{\widetilde{Q}}$, let $\I_w:=\I_{i_1}\cdots \I_{i_k},$ where $w=s_{i_1}\cdots s_{i_k}$ is a reduced expression (this is well-defined \cite[Theorem III.1.9]{BIRS}). 
Note that we have 
$$\Lambda=\widetilde{\Lambda}/{ \langle e_0\rangle},\ \ I_i={\I_i}/{ \langle e_0\rangle}.$$
\end{notation}

\begin{proof}[Proof of Theorem \ref{main2}]
One can check that the map 
$$W_Q\to \langle I_1,\ldots,I_n\rangle,\ w=s_{i_1}\cdots s_{i_k}\mapsto I_w :=I_{i_1}I_{i_2}\cdots I_{i_k}$$ 
is well-defined and surjective by \cite[Theorem III.1.9]{BIRS}, where the assumption that $Q$ is non-Dynkin is not used.  
We only have to show the injectivity. 

Since ${i_1},\cdots,{i_k}\in Q_0$, we have $s_{i_j}\neq s_0$ for any $1\leq j\leq k$. 
Thus we have 
$$I_w=I_{i_1}\cdots I_{i_k}
=(\I_{i_1}/\langle e_0\rangle )\cdots (\I_{i_k}/\langle e_0\rangle) 
=(\I_{i_1}\cdots \I_{i_k})/\langle e_0\rangle=\I_w/\langle e_0\rangle.$$

Therefore, if $I_w=I_{w'}$ for $w,w'\in W_Q$, then we have $\I_w/\langle e_0\rangle=\I_{w'}/\langle e_0\rangle$. 
Since $s_0$ does not appear in reduced expressions of $w$ and $w'$, we have $e_0\in\I_w$ and $e_0\in\I_{w'}$. 
Hence, we obtain $\I_w=\I_{w'}$ and conclude $w=w'$ by \cite[Theorem III.1.9]{BIRS}.
\end{proof}

Thus, for any $w\in W_Q$, we have a support $\tau$-tilting module $I_w$. 
Now we give an explicit description of mutations using this correspondence. 

Let $(I_w,P_w)$ be a basic support $\tau$-tilting pair, where $P_w$ is a basic projective $\La$-module which is determined from $I_w$ (subsection \ref{Support tau tilting modules}). 
Because $e_i\La$ has simple socle $S_{\sigma(i)}$, where $\sigma:Q_0\to Q_0$ is a Nakayama permutation of $\La$,  
if $e_iI_w=0$, then $e_{\sigma(i)}\La\in\add P_w$ and hence $P_w$ is given by  
${\displaystyle\bigoplus_{\begin{smallmatrix}i\in Q_0,e_iI_w=0\end{smallmatrix}}}e_{\sigma(i)}\La$. 
For any $i\in Q_0$, we define a mutation 
$$\mu_i(I_w,P_w):=\left\{\begin{array}{cl} \mu_{(e_iI_w,0)}(I_w,P_w) & {\rm if}\ e_iI_w\neq 0\\ 
\mu_{(0,e_{\sigma(i)}\La)}(I_w,P_w)&{\rm if}\ e_iI_w= 0.\end{array}\right.
$$

Then we have the following result.

\begin{thm}\label{mutation}
For any $w\in W_Q$ and $i\in Q_0$, a mutation of basic support $\tau$-tilting $\La$-module $I_w$ is given as follows $:$
$$\mu_i(I_w,P_w)\cong (I_{s_iw},P_{s_iw}).$$
In particular, $W_Q$ acts transitively and freely on $\sttilt\La$ by 
$s_i(I_w,P_w):=\mu_i(I_w,P_w).$ 
\end{thm}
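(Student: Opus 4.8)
The plan is to establish the isomorphism $\mu_i(I_w,P_w)\cong(I_{s_iw},P_{s_iw})$ first, and then read off transitivity and freeness. Since $Q$ is Dynkin, $W_Q$ is finite, so for each $w$ and each generator $s_i$ exactly one of $l(s_iw)=l(w)+1$ or $l(s_iw)=l(w)-1$ holds. First I would prove the formula in the case where $s_i$ lengthens $w$, and then deduce the opposite case from the involutivity of $\mu_i$.

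Assume $l(s_iw)=l(w)+1$. Prepending $s_i$ to a reduced expression of $w$ yields a reduced expression of $s_iw$, so $I_{s_iw}=I_iI_w$ by Theorem \ref{main2}, and $I_{s_iw}\neq I_w$ by injectivity of the bijection there; thus $I_iI_w\neq I_w$. A one-line computation then forces $e_iI_w\neq0$: if $e_iI_w=0$ then $(1-e_i)I_w=I_w$, whence $I_iI_w=\La(1-e_i)\La I_w=\La(1-e_i)I_w=\La I_w=I_w$, a contradiction. By Lemma \ref{decompose}, $e_iI_w$ is an indecomposable direct summand of $I_w$, so $\mu_i(I_w,P_w)=\mu_{(e_iI_w,0)}(I_w,P_w)$ by definition; by Lemmas \ref{bongartz} and \ref{left mut} this is a left mutation whose module part is $\mu^-_{e_iI_w}(I_w)\cong I_iI_w=I_{s_iw}$. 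Since a basic support $\tau$-tilting module determines its support $\tau$-tilting pair uniquely (subsection \ref{Support tau tilting modules}) and $(I_{s_iw},P_{s_iw})$ is such a pair by Theorem \ref{main1}, the projective parts must agree, giving $\mu_i(I_w,P_w)\cong(I_{s_iw},P_{s_iw})$.

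For the involutivity I would use Definition-Theorem \ref{air mutation}(i): deleting the mutated summand yields an almost complete support $\tau$-tilting pair with exactly two completions, interchanged by mutation, so it suffices to see that ``deleting position $i$'' from $(I_w,P_w)$ and from $(I_{s_iw},P_{s_iw})$ produces the same almost complete pair. From the decomposition in the proof of Proposition \ref{TI}, $I_{s_iw}=I_iI_w=e_iI_{s_iw}\oplus(1-e_i)I_w$, so $(1-e_i)I_w$ is the summand common to $I_w$ and $I_{s_iw}$. If $e_iI_{s_iw}\neq0$, then $P_{s_iw}=P_w$ and deleting $e_iI_{s_iw}$ from $I_{s_iw}$ leaves $((1-e_i)I_w,P_w)$; if $e_iI_{s_iw}=0$, then $I_{s_iw}=(1-e_i)I_w$, so the passage from $I_w$ to $I_{s_iw}$ drops the summand $e_iI_w$ without adding a module summand, forcing a new projective summand which, by the explicit form of $P_\bullet$ and $e_iI_w\neq0$, is $e_{\sigma(i)}\La$; hence $P_{s_iw}=P_w\oplus e_{\sigma(i)}\La$, and deleting $e_{\sigma(i)}\La$ from it again leaves $((1-e_i)I_w,P_w)$. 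In both cases this is precisely the almost complete pair underlying $\mu_{(e_iI_w,0)}(I_w,P_w)$, and since $(I_w,P_w)\neq(I_{s_iw},P_{s_iw})$ --- for instance $I_{s_iw}=I_iI_w\subsetneq I_w$ --- these two pairs are its two completions; hence $\mu_i(I_{s_iw},P_{s_iw})=(I_w,P_w)$. Applying this with $s_iw$ in place of $w$ settles the case $l(s_iw)=l(w)-1$.

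With the formula in hand, any composite $\mu_{j_1}\circ\cdots\circ\mu_{j_m}$ sends $(I_w,P_w)$ to $(I_{s_{j_1}\cdots s_{j_m}w},P_{s_{j_1}\cdots s_{j_m}w})$, which depends only on the group element $s_{j_1}\cdots s_{j_m}$; hence $s_i\mapsto\mu_i$ extends to an action of $W_Q$ with $w\cdot(\La,0)=w\cdot(I_e,P_e)=(I_w,P_w)$. Transitivity is then immediate, and the action is free because $w\cdot(I_v,P_v)=(I_{wv},P_{wv})=(I_v,P_v)$ forces $wv=v$, i.e. $w=1$, by injectivity in Theorem \ref{main2}. (That $\{(I_w,P_w)\mid w\in W_Q\}$ exhausts $\sttilt\La$ is supplied by Theorem \ref{number}.) The step I expect to be the main obstacle is this involutivity argument: one must verify that the two syntactically distinct forms of $\mu_i$ --- mutating the module summand $e_iI_w$ versus mutating the projective summand $e_{\sigma(i)}\La$ --- assemble into a single self-inverse operation ``at position $i$'', which is exactly where the Nakayama permutation $\sigma$ and the explicit shape of $P_w$ enter; everything else is bookkeeping around Lemmas \ref{left mut} and \ref{bongartz}.
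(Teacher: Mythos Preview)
Your proof is correct and takes essentially the same approach as the paper --- identify the common almost complete pair $((1-e_i)I_w,P_w)$ shared by $(I_w,P_w)$ and $(I_{s_iw},P_{s_iw})$, and invoke Definition-Theorem~\ref{air mutation}(i) --- though the paper reaches this in one stroke from the observation $(1-e_i)I_{s_iw}=(1-e_i)I_iI_w=(1-e_i)I_w$, without the preliminary detour through Lemma~\ref{left mut}; your explicit verification that $e_iI_w\neq0$ when $l(s_iw)>l(w)$ is a detail the paper leaves implicit. One caution: your appeal to Theorem~\ref{number} for transitivity is a forward reference in the paper's logical order (that theorem is proved \emph{using} the mutation formula established here, via Lemma~\ref{conn component}); there is no circularity, since only the formula and not the transitivity claim feeds into Theorem~\ref{number}, but be aware that the paper is equally terse on this point, citing only Theorem~\ref{main2} for the second statement.
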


\begin{proof} 
By Theorem \ref{main1} and \ref{main2}, $I_{w}$ and $I_{s_iw}$ are basic support $\tau$-tilting modules, which are not isomorphic. 

Let $w=s_{i_1}\cdots s_{i_k}$ be a reduced expression.
If $l(s_iw)>l(w)$, then $s_iw=s_is_{i_1}\cdots s_{i_k}$ is a reduced
expression. 
Hence, 
we get $(1-e_i)I_{s_iw}=(1-e_i)I_{i}I_{w}=(1-e_i)I_{w}$. 
Therefore we have 
$$(I_{s_iw},P_{s_iw})\cong \left\{\begin{array}{cl} (e_iI_iI_w\oplus(1-e_i)I_w,P_w) & {\rm if}\ e_iI_iI_w\neq 0\\ 
((1-e_i)I_w,P_w\oplus e_{\sigma(i)}\La ) & {\rm if}\ e_iI_iI_w= 0.\end{array}\right.
$$ 
Thus $(I_{w},P_{w})$ and $(I_{s_iw},P_{s_iw})$ have a common almost complete support $\tau$-tilting pair $((1-e_i)I_w,P_w)$ as a direct summand. 

On the other hand, if $l(s_iw)<l(w)$, then $u:=s_iw$ satisfies $l(u)<l(s_iu)$. 
Then, similarly, we can show that $(I_{u},P_{u})=(I_{s_iw},P_{s_iw})$ and $(I_{s_iu},P_{s_iu})=(I_{w},P_{w})$ have a common almost complete support $\tau$-tilting pair as a direct summand. 

Therefore, by Definition-Theorem \ref{air mutation} (i), the first statement follows. 
The second statement given in this theorem follows from Theorem \ref{main2}.
\end{proof}


Using the above result, we give the following crucial lemma.

\begin{lemm}\label{conn component}
The support $\tau$-tilting quiver $\HH(\sttilt\Lambda)$ has a finite connected component.
\end{lemm}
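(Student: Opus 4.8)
The plan is to show that the connected component of $\HH(\sttilt\La)$ containing the vertex $\La$ is closed under taking neighbors in the Hasse quiver, and then observe that every mutation of $I_w$ stays inside $\langle I_1,\ldots,I_n\rangle$, so this component is finite because $\langle I_1,\ldots,I_n\rangle$ is a quotient of the image of $W_Q$ under the map of Theorem \ref{main2}. The key point is that $\La$ itself equals $I_e$ for $e$ the identity of $W_Q$ (the empty product of ideals), so $\La$ lies in the subset $\{I_w \mid w\in W_Q\}$ of $\sttilt\La$.

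First I would argue that the full subquiver of $\HH(\sttilt\La)$ on the vertex set $\{I_w\mid w\in W_Q\}$ is a union of connected components. For this it suffices to check that if $I_w$ is a vertex in this set and $T'$ is any neighbor of $I_w$ in $\HH(\sttilt\La)$ (via a left or right mutation), then $T'$ is again of the form $I_{w'}$. By Theorem \ref{Hasse-mut}, neighbors of $I_w$ in $\HH(\sttilt\La)$ are exactly the mutations $\mu_X^\pm(I_w)$ at an indecomposable summand $X$ of the support $\tau$-tilting pair $(I_w,P_w)$; since $(I_w,P_w)$ has exactly $|\La|=n$ indecomposable summands and these are indexed by $Q_0$ via the description of $P_w$ preceding Theorem \ref{mutation}, every such mutation is one of the $\mu_i(I_w,P_w)$ for $i\in Q_0$. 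By Theorem \ref{mutation}, $\mu_i(I_w,P_w)\cong(I_{s_iw},P_{s_iw})$, which is again in our set. Hence $\{I_w\mid w\in W_Q\}$ is a union of components, and since $W_Q$ acts transitively on this set by Theorem \ref{mutation}, it is in fact a single connected component.

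Next I would note that this component is finite: it is the image of the surjection $W_Q\to\langle I_1,\ldots,I_n\rangle$, $w\mapsto I_w$, of Theorem \ref{main2}, and $\langle I_1,\ldots,I_n\rangle$ is a finite set because $Q$ is Dynkin so $W_Q$ is a finite group. Finally, since $\La=I_e$ (the empty product, $k=0$ in the definition of $\langle I_1,\ldots,I_n\rangle$), the vertex $\La$ lies in this component, so $\HH(\sttilt\La)$ has a finite connected component, namely the one containing $\La$.

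**The main obstacle** is making precise that the two types of mutation at $I_w$ — mutation at a module summand $e_iI_w$ when $e_iI_w\ne 0$, and mutation at a projective summand $e_{\sigma(i)}\La$ when $e_iI_w=0$ — together exhaust all neighbors in $\HH(\sttilt\La)$, i.e. that they match the $n$ indecomposable summands of the pair $(I_w,P_w)$. This is where the explicit description of $P_w$ as $\bigoplus_{e_iI_w=0}e_{\sigma(i)}\La$ given just before Theorem \ref{mutation} is essential, together with Lemma \ref{decompose} ensuring each $e_iI_w$ is indecomposable or zero; once this bookkeeping is in place, Theorem \ref{mutation} does the rest.
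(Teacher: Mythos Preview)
Your proposal is correct and follows essentially the same route as the paper: identify the connected component of $\La$ as $\{I_w\mid w\in W_Q\}$, use Theorem \ref{mutation} to verify that this set is closed under taking neighbors in $\HH(\sttilt\La)$, and conclude finiteness from the finiteness of $W_Q$. The only cosmetic difference is that the paper establishes connectivity of $\{I_w\}$ by an explicit induction on $l(w)$ via Lemma \ref{left mut} (using only left mutations from $\La$), whereas you invoke the transitive $W_Q$-action from Theorem \ref{mutation}; these amount to the same thing.
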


\begin{proof}
Let $C$ be the connected component of $\HH(\sttilt\Lambda)$ containing $\La$. 
By Theorem \ref{main1}, $I_w$ is a basic support $\tau$-tilting $\La$-module for any $w\in W_Q$.
We will show that $\{I_w\ |\ w\in W_Q\}$ gives the set of vertices of $C$.

First, we show that every $I_w$ belongs to $C$ by induction on the length of $w\in W_Q$. 
Assume that $\{I_w\ |\ w\in W_Q\ {\rm with}\ l(w)\leq k\}$ belong to $C$. 
Take $w\in W_Q$ with $l(w)= k+1$ and decompose $w=s_iw'$, where $l(w')= k$. 
Then, we have $I_w=I_iI_{w'}\neq I_{w'}$ by Theorem \ref{main2} and we get $\mu_{e_iI_{w'}}^-(I_{w'})\cong I_iI_{w'}$ by Lemma \ref{left mut}. Thus, $I_{w}$ belongs to $C$ and our claim follows inductively.

Next, we show that any vertex in $C$ has a form $I_w$ for some $w\in W_Q$. This follows the fact that the neighbours of $I_w$ are given as $I_{s_iw}$ for $i\in Q$ by Theorem \ref{mutation}. Therefore we obtain the assertion.  
\end{proof}

Now we recall the following useful result.

\begin{prop}\cite[Corollary 2.38]{AIR}\label{connect}
If $\HH(\sttilt\Lambda)$ has a finite connected component $C$, 
then $C=\HH(\sttilt\Lambda)$.
\end{prop}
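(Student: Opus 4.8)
The plan is to exploit the partial order on $\sttilt\La$ together with Theorem \ref{Hasse-mut}, which identifies $\HH(\sttilt\La)$ with the Hasse quiver of the poset $(\sttilt\La,\leq)$. In particular this quiver is acyclic, and $\La$ is its unique maximal element, since $\Fac\La=\mod\La\supseteq\Fac T$ for every $T\in\sttilt\La$, so that $\La\geq T$ for all $T$.

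\textbf{Step 1.} I would first show that $\La$ is the only vertex of $\HH(\sttilt\La)$ with no incoming arrow, i.e.\ that every $T\in\sttilt\La$ with $T\neq\La$ admits a right mutation. Writing $(T,P)$ for the support $\tau$-tilting pair attached to $T$, one deletes a suitably chosen indecomposable summand of $(T,P)$ to obtain an almost complete support $\tau$-tilting pair, and appeals to Definition-Theorem \ref{air mutation}(i): since $\Fac T\subsetneq\mod\La$ one can arrange that the second completion is strictly larger than $(T,P)$, which is the desired right mutation. Choosing the correct summand (one witnessing $\Fac T\neq\mod\La$) is the only delicate point here.

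\textbf{Step 2.} Since $C$ is a finite full subquiver of the acyclic quiver $\HH(\sttilt\La)$ and is an entire connected component, it contains a source. By Step 1 that source must be $\La$, hence $\La\in C$.

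\textbf{Step 3 (the crux).} I would then prove $C=\HH(\sttilt\La)$ by showing every $T\in\sttilt\La$ lies in $C$. Because $\La\in C$ and $\La\geq T$, the set $\{U\in C\mid U\geq T\}$ is nonempty and finite, so it has a minimal element $T_0$. If $T_0\neq T$, then $T_0>T$; put $\TT=\Fac T\subsetneq\Fac T_0$. Then $T_0\notin\TT$, so $T_0$ has an indecomposable summand $X\notin\TT$, and I would use the description in Definition-Theorem \ref{air mutation}(ii),(iii) of $\mu_X^-(T_0)$ via a minimal left $\add(T_0/X)$-approximation of $X$ to check that $\mu_X^-$ is a left mutation of $T_0$ with $\Fac(\mu_X^-(T_0))\supseteq\TT$. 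Setting $T_1:=\mu_X^-(T_0)$ we get $T\leq T_1<T_0$, and $T_1\in C$ because $C$ is a connected component containing $T_0$; this contradicts minimality of $T_0$. Hence $T_0=T\in C$, and since $T$ was arbitrary, $C=\HH(\sttilt\La)$.

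The main obstacle is Step 3, namely making precise the statement that from any $T_0>T$ one can perform a \emph{single} left mutation of $T_0$ that does not drop below $T$. I would establish this at the level of functorially finite torsion classes via Theorem \ref{fftor-spt}: for a summand $X$ of $T_0$ with $X\notin\Fac T$, one must verify that replacing $X$ by the cokernel of its minimal left $\add(T_0/X)$-approximation keeps $\Fac T$ inside the new torsion class. Once this one-step ``reachability toward a smaller module'' is available, Steps 2 and 3 close the proof; Step 1 is then essentially bookkeeping with Definition-Theorem \ref{air mutation}.
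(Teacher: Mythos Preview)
The paper does not prove this proposition; it is quoted from \cite[Corollary~2.38]{AIR} and invoked as a black box, so there is no in-paper argument to compare against. Your three-step outline---show $\La$ is the unique source, hence $\La\in C$, then descend from $\La$ to an arbitrary $T$ by single left mutations inside $C$---is indeed the strategy used in \cite{AIR}.

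However, Step~3 as written has a real gap. Your rule for selecting the summand to mutate, ``choose an indecomposable $X\mid T_0$ with $X\notin\Fac T$'', does \emph{not} guarantee $\mu_X^-(T_0)\geq T$. Already for $\La$ of type $A_2$ (Example~\ref{exam1}(a)) it fails: take $T_0=\La$ and $T=I_1I_2\cong S_1$, so $\Fac T=\add S_1$. Neither $e_1\La$ nor $e_2\La$ lies in $\add S_1$, so your criterion permits either choice; both give left mutations, but $\mu_{e_1\La}^-(\La)=I_1$ has $S_1\notin\Fac I_1$, whence $I_1\not\geq T$. Only the choice $X=e_2\La$ works. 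Thus the verification you postpone to the final paragraph---that replacing $X$ by the cokernel of its left $\add(T_0/X)$-approximation keeps $\Fac T$ inside the new torsion class---cannot be carried out for an arbitrary such $X$; the assertion is simply false in general. What is actually needed is the one-step descent lemma: if $T_0>T$ then \emph{some} left mutation $T_1$ of $T_0$ satisfies $T_1\geq T$. This is exactly what \cite{AIR} establishes between Corollaries~2.34 and~2.38, and the proof there does not proceed via the naive criterion $X\notin\Fac T$ but through the Bongartz-completion description of the two completions of an almost complete pair. Your Step~1 is effectively the dual special case of the same lemma (applied to the inequality $\La>T$), and your sketch for it is correspondingly incomplete.
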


As a conclusion, we can obtain the following statement.

\begin{thm}\label{spt contain}
Any basic support $\tau$-tilting $\La$-module is isomorphic to an element of $\langle I_1,\ldots,I_n\rangle$. In particular, it is isomorphic to a support $\tau$-tilting ideal of $\La$. 
\end{thm}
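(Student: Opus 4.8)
The plan is to combine the results already established in this subsection. By Lemma~\ref{conn component}, the support $\tau$-tilting quiver $\HH(\sttilt\Lambda)$ has a finite connected component, namely the component $C$ containing $\La$; moreover the proof of that lemma shows precisely that the vertex set of $C$ equals $\{I_w\mid w\in W_Q\}=\langle I_1,\ldots,I_n\rangle$, where the last equality is Theorem~\ref{main2}. So the whole strategy reduces to showing that $C$ is all of $\HH(\sttilt\Lambda)$, i.e. that there are no other basic support $\tau$-tilting $\La$-modules lying outside $C$.

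First I would invoke Proposition~\ref{connect}: since $\HH(\sttilt\Lambda)$ has a finite connected component $C$, we get $C=\HH(\sttilt\Lambda)$. By Theorem~\ref{Hasse-mut}, $\HH(\sttilt\Lambda)$ is the Hasse quiver of the poset $\sttilt\Lambda$, so its vertex set is all of $\sttilt\Lambda$, i.e. every isomorphism class of basic support $\tau$-tilting $\La$-module appears as a vertex of $C$. Combining with the identification of the vertices of $C$ from Lemma~\ref{conn component}, every basic support $\tau$-tilting $\La$-module is isomorphic to $I_w$ for some $w\in W_Q$, hence to an element of $\langle I_1,\ldots,I_n\rangle$. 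Finally, by Theorem~\ref{main1} each such $I_w$ is a (basic) support $\tau$-tilting ideal of $\La$, which gives the ``in particular'' clause.

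There is essentially no obstacle here — the statement is a short deduction from Lemmas and Propositions already in place. The only point that requires a moment of care is making sure that the vertex set of the component $C$ is exactly $\langle I_1,\ldots,I_n\rangle$ and not merely contained in it: one direction (every $I_w$ lies in $C$) and the other (every vertex of $C$ is some $I_w$) are both recorded inside the proof of Lemma~\ref{conn component}, using Lemma~\ref{left mut} for the first and Theorem~\ref{mutation} for the second, so I would simply cite that lemma. If one wanted to be fully self-contained one could instead argue directly: $\{I_w\mid w\in W_Q\}$ is finite by Theorem~\ref{main2} (the map $W_Q\to\langle I_1,\ldots,I_n\rangle$ is a bijection and $W_Q$ is finite in Dynkin type), it is closed under the mutation operations $\mu_i$ by Theorem~\ref{mutation}, hence it is a union of connected components of $\HH(\sttilt\Lambda)$, and being finite and nonempty it must equal $\HH(\sttilt\Lambda)$ by Proposition~\ref{connect}. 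Either route yields the theorem immediately.
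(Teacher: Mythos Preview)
Your proof is correct and follows essentially the same approach as the paper: invoke Lemma~\ref{conn component} to obtain a finite connected component whose vertices are exactly the $I_w$, then apply Proposition~\ref{connect} to conclude this component is all of $\HH(\sttilt\Lambda)$, and finish with Theorem~\ref{main1} for the ``in particular'' clause. The paper's version is more terse, but the logical skeleton is identical.
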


\begin{proof}
By Lemma \ref{conn component}, $\HH(\sttilt\Lambda)$ has a finite connected component, and Proposition \ref{connect} implies that it coincides with $\HH(\sttilt\Lambda)$. 
Thus, any support $\tau$-tilting $\La$-module is given by $I_w$ for some $w\in W_Q$. 
In particular, it is a support $\tau$-tilting ideal from Theorem \ref{main1}. 
\end{proof}

Furthermore, we give the following lemma. 

\begin{lemm}\label{right-two}
If right ideals $T$ and $U$ are isomorphic as $\La$-modules, then $T=U$.
\end{lemm}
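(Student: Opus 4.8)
The plan is to reduce the abstract right-module isomorphism to an honest left multiplication, and then to exploit that the ideals in play are two-sided. Concretely, let $T,U$ be the ideals under consideration (elements of $\langle I_1,\ldots,I_n\rangle$, hence two-sided), and let $\phi\colon T\to U$ be an isomorphism of right $\La$-modules, with canonical inclusions $\iota_T\colon T\hookrightarrow\La$ and $\iota_U\colon U\hookrightarrow\La$. Since $\La$ is selfinjective, $\La_\La$ is injective, so the map $\iota_U\phi\colon T\to\La$ extends along $\iota_T$ to a right $\La$-module endomorphism $h\colon\La\to\La$ with $h\iota_T=\iota_U\phi$; this is exactly the injectivity argument already used in the proof of Lemma \ref{surje}. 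Writing $\lambda:=h(1)$, the map $h$ is left multiplication by $\lambda$, and hence $\phi(x)=\lambda x$ for every $x\in T$. In particular $U=\phi(T)=\lambda T$.

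Running the same argument for $\phi^{-1}\colon U\to T$ produces an element $\mu\in\La$ with $T=\mu U$. The decisive step is now to use that $T$ and $U$ are two-sided: being in particular left ideals, they satisfy $\lambda T\subseteq T$ and $\mu U\subseteq U$, so the two equalities above upgrade to $U=\lambda T\subseteq T$ and $T=\mu U\subseteq U$. Combining these gives the double inclusion $T\subseteq U\subseteq T$, whence $T=U$ as subsets of $\La$, which is the assertion.

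The step I expect to carry the real content is the passage from the isomorphism $\phi$ to multiplication by $\lambda$, which rests squarely on selfinjectivity of $\La$, exactly as in Lemma \ref{surje}; once $\phi$ and $\phi^{-1}$ are realized as multiplications by $\lambda$ and $\mu$, the conclusion is forced. I would emphasize that two-sidedness is genuinely used and is the reason the statement holds for the ideals we care about: by construction every element of $\langle I_1,\ldots,I_n\rangle$ is a two-sided ideal, and it is precisely the inclusions $\lambda T\subseteq T$ and $\mu U\subseteq U$—available only because $T$ and $U$ are left ideals as well—that turn the equalities $U=\lambda T$, $T=\mu U$ into the sandwich $T\subseteq U\subseteq T$. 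This lemma then feeds into the identification of $\langle I_1,\ldots,I_n\rangle$ with $\sttilt\La$, supplying injectivity to complement the surjectivity provided by Theorem \ref{spt contain}.
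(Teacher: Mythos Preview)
Your argument is essentially identical to the paper's: extend the isomorphism along the inclusion using selfinjectivity of $\La$ to realize it as left multiplication by some $\lambda$, deduce $U=\lambda T\subseteq T$, and symmetrize. The only difference is that you make explicit the two-sidedness hypothesis needed for the containment $\lambda T\subseteq T$; the paper's statement speaks only of ``right ideals'' but its proof silently uses the same inclusion, so your added care is warranted and your proof is correct.
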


\begin{proof}
Assume that $f:T\to U$ is an isomorphism of $\La$-modules. 
Since $\La$ is selfinjective, there exists a $\La$-module map $h:\La\to\La$ 
making the diagram 

\[\xymatrix{0\ar[r]&T\ar[r]^{\iota}\ar[d]^{f}&\La\ar@{.>}[d]^h\\
0\ar[r]&U\ar[r]^{\iota}&\La,}
\]
commutative, where $\iota$ is the canonical inclusion. 
Put $h(1)=\lambda\in\La$.   
By the commutative diagram, we have $\iota f(T)=h\iota(T)$ and hence we get 
$U=h(T)=\lambda T\subset T.$ 
By the similar argument for $f^{-1}$, we can show that $T\subset U.$ 
Thus we conclude that $T=U$.
\end{proof}

Now we are ready to prove the main result of this paper.

\begin{thm}\label{number}
There exist bijections between the isomorphism classes of basic support $\tau$-tilting $\La$-modules, basic support $\tau$-tilting $\La^{\op}$-modules and the elements of $W_Q$. 
\end{thm}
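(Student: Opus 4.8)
The plan is to assemble the bijections from the work already done in this section. First I would establish the bijection between $W_Q$ and $\sttilt\La$. By Theorem \ref{main1}, every $I_w$ with $w\in W_Q$ is a basic support $\tau$-tilting ideal, so the map $w\mapsto I_w$ lands in $\sttilt\La$; by Theorem \ref{spt contain}, every basic support $\tau$-tilting $\La$-module is isomorphic to some $I_w$, so this map is surjective. For injectivity, suppose $I_w\cong I_{w'}$ as $\La$-modules. Since these are right ideals, Lemma \ref{right-two} upgrades the isomorphism to an equality $I_w=I_{w'}$ of ideals, and then Theorem \ref{main2} (the bijection $W_Q\to\langle I_1,\ldots,I_n\rangle$) forces $w=w'$. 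This gives the bijection $W_Q\xrightarrow{\sim}\sttilt\La$.

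Next I would handle the $\La^{\op}$ side. The key observation is that the whole setup is left-right symmetric: the preprojective algebra $\La$ satisfies $\La^{\op}\cong\La$ (it is defined by the symmetric relation $\sum_a(aa^*-a^*a)$, and reversing all arrows together with swapping $a\leftrightarrow a^*$ is an isomorphism $\La\to\La^{\op}$), and under this identification the ideal $I_i=\La(1-e_i)\La$ corresponds to the same ideal viewed on the other side. More directly, Definition \ref{def spt} of a support $\tau$-tilting ideal is symmetric in left and right, and Theorem \ref{main1} already asserts each $T\in\langle I_1,\ldots,I_n\rangle$ is simultaneously a left and a right support $\tau$-tilting module; the proofs of Lemma \ref{I_i} and Proposition \ref{TI} explicitly treat both sides. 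Thus running the argument of the previous paragraph with ``right'' replaced by ``left'' everywhere — using the left-module version of Lemma \ref{right-two}, which holds by the identical proof since $\La$ is selfinjective on both sides — yields a bijection $W_Q\xrightarrow{\sim}\sttilt\La^{\op}$ via $w\mapsto {}_\La I_w$.

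Composing the two bijections then gives a bijection $\sttilt\La\xrightarrow{\sim}\sttilt\La^{\op}$, namely $I_w\mapsto {}_\La I_w$, completing the three-way correspondence. I expect the main obstacle to be essentially bookkeeping rather than a genuine difficulty: one must be careful that Lemma \ref{right-two} really does apply to conclude $I_w = I_{w'}$ as \emph{subsets} of $\La$ (so that Theorem \ref{main2}'s injectivity can be invoked), and one must state clearly that the left-hand analogues of Lemmas \ref{right-two} and the results of subsection \ref{spt ideal} hold by the same proofs with the roles of left and right interchanged — this is legitimate precisely because $\La$ is selfinjective and the defining relation of $\La$ is symmetric. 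No new computation is needed; the theorem is a repackaging of Theorems \ref{main1}, \ref{main2}, \ref{spt contain} and Lemma \ref{right-two}.
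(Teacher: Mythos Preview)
Your proposal is correct and follows essentially the same route as the paper: the bijection $W_Q\to\sttilt\La$ is obtained from Theorems \ref{main1}, \ref{main2}, \ref{spt contain} together with Lemma \ref{right-two}, and the $\La^{\op}$ side is handled by the left-right symmetry of the construction (the paper simply says ``By Theorem \ref{spt contain}, it is enough to show a bijection between $\sttilt\La$ and $W_Q$'' and then runs the argument for right modules). Your additional remark that $\La\cong\La^{\op}$ is true and gives an alternative shortcut, but is not needed --- the symmetric argument you also sketch is exactly what the paper intends.
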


\begin{proof}By Theorem \ref{spt contain}, it is enough to show a bijection between $\sttilt\La$ and $W_Q$.

By Theorem \ref{main1} and \ref{main2}, we have a map 
$W_Q\ni w\mapsto I_w\in\sttilt\La$. This map is surjective since 
any support $\tau$-tilting $\La$-module is isomorphic to $I_w$ for some $w\in W_Q$ by 
Theorem \ref{spt contain}. 
Moreover, it is injective by Theorem \ref{main2} and Lemma \ref{right-two}. 
Thus we get the conclusion.
\end{proof}

\begin{remk}
It is known that the order of $W_Q$ is finite and the explicit number is determined by the type of the underlying graph of $Q$ as follows \cite[Section 2.11]{H}.

\[\begin{array}{|c|c|c|c|c|c|}
\hline                                                                       
\bar{\overline{Q}} & A_n &D_n & E_6 & E_7 & E_8 \\ \hline
  &(n+1)! &  2^{n-1}n!& 51840 & 2903040 & 696729600\\ \hline
\end{array}\] 
\end{remk}

At the end of this subsection, we give an application related to annihilators of $\sttilt\La$. Note that a tilting module is a \emph{faithful} $\tau$-tilting modules (i.e. its (right) annihilator vanishes) \cite[Proposition 2.2]{AIR}. 
We can completely determine annihilators of support $\tau$-tilting $\La$-modules. 
Here, we denote by $\ann X:=\{\lambda\in\La\ |\ X\lambda=0\}$ the annihilator of $X\in\mod\La$. 

\begin{cor}\label{annihi}
We have $\ann I_w=I_{w^{-1}w_0}$, where $w_0$ is the longest element in $W_Q$. 
Thus, we have a bijection 
$$\sttilt\La\to\sttilt\La,\ T\mapsto {\rm ann}T.$$ 
\end{cor}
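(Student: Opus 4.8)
The plan is to identify $\ann I_w$ as a two-sided ideal belonging to $\langle I_1,\ldots,I_n\rangle$ and then determine which element of $W_Q$ it corresponds to via Theorem \ref{main2}. First I would recall the standard duality for ideals in a selfinjective algebra: since $\La$ is selfinjective, for a two-sided ideal $I$ the right annihilator $r(I)=\{\lambda\in\La\mid I\lambda=0\}$ coincides with $D(\La/{}_{\La}I')$ for an appropriate left ideal, and in particular $\dim_K \ann I = \dim_K \La - \dim_K I$ when $I$ is obtained as a left ideal generated by idempotent-type relations; more usefully, annihilators interact with products by the rule $\ann(I J)\supseteq \ann J$ and, for the ideals $I_i$, one has the key relation $\ann(I_i T)=\ann(T)\cdot(\text{something})$. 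The cleanest route is to work in the completed extended setting of Notation \ref{notation}, where the tilting-module machinery of \cite{BIRS} applies: there $\ann \I_w$ is governed by the functor $\RHom$-duality and equals $\I_{w^{-1}w_0}$ up to the extended vertex. I would first prove the identity $\ann I_i = I_i$ (equivalently $e_i$ is not killed but $\ann I_i$ is the ideal $I_{s_i w_0}$-type object), then bootstrap.

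The main computational step is the inductive claim: \emph{if $\ann I_w = I_{w^{-1}w_0}$ and $l(s_i w)>l(w)$, then $\ann I_{s_i w} = \ann(I_i I_w) = I_{(s_iw)^{-1}w_0}=I_{w^{-1}s_i w_0}$.} To establish this I would use the exact sequence (\ref{I s_i}) tensored appropriately, together with the fact (from the proof of Theorem \ref{mutation}) that $I_{s_iw}=I_iI_w$ differs from $I_w$ only in the $e_i$-component, and that $e_i I_i I_w$ is the mutated summand. Dualizing, $\ann I_{s_iw}$ and $\ann I_w$ should differ by a left mutation on the opposite side, i.e. by left multiplication by some $I_j$; tracking the Nakayama permutation $\sigma$ shows $j$ is determined so that the resulting word is $w^{-1}s_iw_0$. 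Here one uses that $w_0 s_i w_0 = s_{\sigma'(i)}$ for the diagram automorphism $\sigma'$ induced by $-w_0$, which matches the Nakayama permutation $\sigma$ of $\La$ appearing throughout the paper; this compatibility between $-w_0$ on the root side and $\sigma$ on the algebra side is the conceptual heart.

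For the second assertion, once $\ann I_w = I_{w^{-1}w_0}$ is known, the map $T\mapsto \ann T$ on $\sttilt\La$ is, under the bijection $\sttilt\La\cong W_Q$ of Theorem \ref{number}, simply $w\mapsto w^{-1}w_0$. This is a bijection of $W_Q$ with itself (it is the composition of inversion $w\mapsto w^{-1}$, which is bijective, with right translation by $w_0$, which is bijective), hence $T\mapsto \ann T$ is a bijection on $\sttilt\La$. One should also remark that $\ann T$ is again a support $\tau$-tilting ideal, which is immediate from Theorem \ref{main1} since $I_{w^{-1}w_0}\in\langle I_1,\ldots,I_n\rangle$, and that $\ann$ is an involution precisely when $w_0$ is central times an involution — but involutivity is not claimed, so I would not pursue it.

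The step I expect to be the main obstacle is pinning down the inductive identity $\ann(I_iI_w)=I_{w^{-1}s_iw_0}$ with the correct index: it requires simultaneously (i) a clean formula for how $\ann$ transforms under left multiplication by $I_i$ — presumably $\ann(I_i T)=(\ann T)I_{\sigma(i)}$ or a similar twisted formula, proved via the $\nu$-duality used in Lemma \ref{I_i} — and (ii) the Coxeter-theoretic fact that conjugation by $w_0$ realizes the automorphism $\sigma$. Getting the twist by $\sigma$ exactly right (rather than off by $\sigma$ or by inversion) is the delicate point; I would verify it on the rank-one/rank-two cases (type $A_1$, $A_2$) against Example \ref{exam1} before trusting the general argument.
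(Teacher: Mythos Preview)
Your inductive strategy is a genuinely different route from the paper's, and the step you flag as the obstacle is indeed the whole difficulty: you never actually establish a formula of the type $\ann(I_iT)=(\ann T)I_{\sigma(i)}$, you only conjecture one, and proving it directly (with the correct twist) is essentially as hard as the corollary itself. So as written the proposal has a gap at precisely the point you identify.

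The paper bypasses the induction entirely with a two-line argument. First, the elementary observation that for any two-sided ideal $I$ the right annihilator $\{\lambda:I\lambda=0\}$ coincides with the left annihilator of the left $\La$-module $DI$ (this is immediate from the definition of the dual action). Second, the paper invokes Proposition~\ref{ort dual} (due to \cite{ORT}), which states $DI_w\cong\La/I_{w^{-1}w_0}$ in $\mod\La^{\op}$; since $I_{w^{-1}w_0}$ is a two-sided ideal, the left annihilator of $\La/I_{w^{-1}w_0}$ is exactly $I_{w^{-1}w_0}$. That is the whole proof of the first assertion. Your argument for the second assertion (the map $w\mapsto w^{-1}w_0$ is a bijection on $W_Q$, hence $T\mapsto\ann T$ is a bijection on $\sttilt\La$ via Theorem~\ref{number}) is fine and matches the paper.

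In short: the paper trades your proposed induction on $l(w)$ for a single external input, the duality $DI_w\cong\La/I_{w^{-1}w_0}$. If you wanted to make your approach self-contained you would effectively be reproving that duality, and the Nakayama/$(-w_0)$ compatibility you mention is exactly what drives it; but the paper simply cites it.
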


For a proof, we recall the following useful result. 

\begin{prop}\cite[Proposition 6.4]{ORT}\label{ort dual}
Let $w_0$ be the longest element in $W_Q$. Then 
we have the following isomorphisms 
\begin{itemize}
\item[(a)]$DI_w\cong\La/I_{w^{-1}w_0}$ in $\mod\La^{\op}$.
\item[(b)]$DI_w\cong\La/I_{w_0w^{-1}}$ in $\mod\La$.
\end{itemize}
\end{prop}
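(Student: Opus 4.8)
The plan is to prove part (a) directly and then deduce part (b) from it using the anti-isomorphism $\La\cong\La^{\op}$. Throughout I use that $\La$ is basic selfinjective, hence Frobenius: there is a linear form $\mu\colon\La\to K$ whose kernel contains no nonzero one-sided ideal, and the associated pairing $\langle a,b\rangle:=\mu(ab)$ is nondegenerate and associative, $\langle ab,c\rangle=\langle a,bc\rangle$. For a right ideal $J\subseteq\La$ set $J^{\perp}:=\{x\in\La\mid\mu(Jx)=0\}$; then $J^{\perp}$ is a left ideal with $\dim_K J^{\perp}=\dim_K\La-\dim_K J$, and $x\mapsto\mu(-\,x)$ induces an isomorphism of left $\La$-modules $DJ\cong\La/J^{\perp}$. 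Applying this to $J=I_w$, part (a) reduces to the single identity $I_w^{\perp}=I_{w^{-1}w_0}$.

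First I would record the normalization $I_{w_0}=0$. For every $i$ the word $w_0=s_i(s_iw_0)$ is reduced (since $l(s_iw_0)<l(w_0)$), so by Theorem \ref{main2} and $I_i^2=I_i$ (immediate from $I_i=\La(1-e_i)\La$ and $(1-e_i)=(1-e_i)1(1-e_i)$) one gets $I_iI_{w_0}=I_i^2I_{s_iw_0}=I_{w_0}$, and symmetrically $I_{w_0}I_i=I_{w_0}$; if $I_{w_0}\neq0$ then some $S_j$ would be a summand of $\Top(I_{w_0})$, forcing $I_{w_0}I_j\subsetneq I_{w_0}$ by Lemma \ref{str of I}, a contradiction. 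For the inclusion $I_{w^{-1}w_0}\subseteq I_w^{\perp}$, note that $l(w)+l(w^{-1}w_0)=l(w)+(l(w_0)-l(w))=l(w_0)$, so concatenating reduced expressions for $w$ and $w^{-1}w_0$ gives a reduced expression for $w_0$; hence by Theorem \ref{main2}
\[
I_w\,I_{w^{-1}w_0}=I_{w_0}=0 .
\]
Thus for $a\in I_w$, $x\in I_{w^{-1}w_0}$ we have $\langle a,x\rangle=\mu(ax)=0$, i.e. $I_{w^{-1}w_0}\subseteq I_w^{\perp}$.

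It remains to compare dimensions, i.e. to prove $\dim_K I_{w^{-1}w_0}=\dim_K I_w^{\perp}=\dim_K\La-\dim_K I_w$, equivalently $\dim_K(\La/I_w)+\dim_K(\La/I_{w^{-1}w_0})=\dim_K\La$. For this I would invoke the known description of the dimension vector of $\La/I_w$ in terms of the root system $\Phi$ of $Q$ (see \cite{BIRS}): $\underline{\dim}(\La/I_w)=\sum_{\beta\in\mathrm{inv}(w)}\beta$, where $\mathrm{inv}(w)=\{\beta\in\Phi^{+}\mid w\beta<0\}$. Writing $\rho$ for the half-sum of positive roots and using $\sum_{\beta\in\mathrm{inv}(w)}\beta=\rho-w\rho$, $\sum_{\beta\in\Phi^{+}}\beta=2\rho$, $w_0\rho=-\rho$, and $\dim_K\gamma=(\rho,\gamma)$ on the root lattice, a short computation gives $\dim_K(\La/I_w)=(\rho,\rho)-(\rho,w\rho)$ and $\dim_K(\La/I_{w^{-1}w_0})=(\rho,\rho)+(\rho,w\rho)$, whose sum is $2(\rho,\rho)=\dim_K\La$. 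Together with the inclusion this yields $I_w^{\perp}=I_{w^{-1}w_0}$, proving (a).

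Finally, (b) follows by transport through the standard anti-automorphism $\epsilon\colon\La\to\La$ of the preprojective algebra (fixing each $e_i$ and interchanging $a\leftrightarrow a^{*}$ up to sign), which realizes $\La\cong\La^{\op}$. Since $\epsilon(e_i)=e_i$ we get $\epsilon(I_i)=I_i$, hence $\epsilon(I_v)=I_{v^{-1}}$, so $\epsilon$ identifies the left $\La$-module $I_w$ with the right $\La$-module $I_{w^{-1}}$. Applying (a) to $I_{w^{-1}}$ gives $DI_{w^{-1}}\cong\La/I_{ww_0}$, and transporting back along $\epsilon$ (which sends $I_{ww_0}$ to $I_{(ww_0)^{-1}}=I_{w_0w^{-1}}$) yields $DI_w\cong\La/I_{w_0w^{-1}}$ in $\mod\La$, which is (b). The main obstacle is the dimension identity: the Frobenius machinery and the inclusion are formal and use only Theorem \ref{main2}, Lemma \ref{str of I} and selfinjectivity, but matching dimensions genuinely needs the root-theoretic value of $\dim_K(\La/I_w)$, i.e. input from the combinatorics of $W_Q$ and $w_0$ rather than from the algebra structure alone.
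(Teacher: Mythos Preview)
The paper does not give its own proof of this proposition; it is quoted verbatim from \cite[Proposition~6.4]{ORT} and used as a black box (in Corollary~\ref{annihi} and Proposition~\ref{bij torsion}). So there is no ``paper's proof'' to compare against, and your argument is genuinely additional content.

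Your overall strategy is sound and essentially correct. The Frobenius--orthogonal step $DJ\cong\La/J^{\perp}$ is standard, the verification $I_{w_0}=0$ is clean, and the inclusion $I_{w^{-1}w_0}\subseteq I_w^{\perp}$ via $I_wI_{w^{-1}w_0}=I_{w_0}=0$ using $l(w)+l(w^{-1}w_0)=l(w_0)$ and Theorem~\ref{main2} is exactly right. The dimension count is also correct once one knows the dimension formula for $\La/I_w$; note that the identity you need, $\dim_K(\La/I_w)+\dim_K(\La/I_{w^{-1}w_0})=\dim_K\La$, is insensitive to whether the inversion-set formula reads $\rho-w\rho$ or $\rho-w^{-1}\rho$, since the pairing $(\cdot,\cdot)$ is $W$-invariant and $w_0\rho=-\rho$. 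The derivation of (b) from (a) via the anti-automorphism $\epsilon$ is fine; you are implicitly using $\epsilon^2=\id$ and $w_0^{-1}=w_0$.

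Two small caveats. First, the attribution of the layer/dimension-vector formula for $\La/I_w$ to \cite{BIRS} is slightly loose: that paper treats the non-Dynkin case, and the Dynkin statement is usually extracted via the quotient argument of Notation~\ref{notation} (as in the proof of Theorem~\ref{main2}) or taken from \cite{AIRT}; you could also prove $\dim_K(\La/I_w)=\sum_{\beta\in\mathrm{inv}(w)}\mathrm{ht}(\beta)$ directly by induction on $l(w)$ using Lemma~\ref{str of I}. Second, the phrase ``$\dim_K\gamma=(\rho,\gamma)$ on the root lattice'' should be read as ``if $\underline{\dim}\,M=\gamma$ then $\dim_KM=(\rho,\gamma)$'', which is just the statement that $(\rho,\alpha_i)=1$ for each simple root in the simply-laced normalization; this is harmless but worth saying explicitly.
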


Then we give a proof of Corollary \ref{annihi}.

\begin{proof}[Proof of Corollary \ref{annihi}]
It is clear that the right annihilator of $I_w$ coincides with the left annihilator of $DI_w$. 
On the other hand, we have isomorphism $DI_w\cong\La/I_{w^{-1}w_0}$ of $\La^{\op}$-modules by Proposition \ref{ort dual}. Thus, we get $\ann I_w=I_{w^{-1}w_0}$. The second statement given in 
this corollary follows from Theorem \ref{main2}.
\end{proof}


\subsection{Partial orders of support $\tau$-tilting modules and the Weyl group}\label{Partial orders of}
In this subsection, we study a relationship between a partial order of support $\tau$-tilting modules and that of $W_Q$. In particular, we show that the bijection $W_Q\to\sttilt\La$ given in Theorem \ref{number} 
induces an (anti)isomorphism as partially ordered sets.

\begin{defi}\label{orders}
Let $u,w\in W_Q$. 
We write $u\leq_L w$ if there exist $s_{i_1},\ldots, s_{i_k}$ such that $w=s_{i_k}\ldots s_{i_1}u$ and $l(s_{i_j}\ldots s_{i_1}u)=l(u)+j$ for $0\leq j\leq k$. 
Clearly $\leq_L$ gives a partial order on $W_Q$, and we call $\leq_L$ the \emph{$($left$)$ weak order}. We denote by $\HH(W_Q,\leq_L)$ the Hasse quiver induced by  weak order on $W_Q$.
\end{defi}

The following assertion is immediate from the definition of the weak order.  

\begin{lemm}\label{right hasse}
An arrow in $\HH(W_Q,\leq_L)$ is given by 
\[\begin{array}{ll}
w\to s_iw&(\mbox{if $l(w)>l(s_iw)$}),\\
s_iw\to w&(\mbox{if $l(w)<l(s_iw)$})
\end{array}\]
for any $w\in W_Q$ and $i\in Q_0$. 
\end{lemm}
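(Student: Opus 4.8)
The statement to prove, Lemma \ref{right hasse}, characterizes the arrows of the Hasse quiver of the left weak order $\leq_L$ on $W_Q$. Here is how I would approach it.

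\medskip

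\textbf{Plan of proof.} The plan is to show that for a covering relation $u \lessdot w$ in $(W_Q,\leq_L)$, the longer element $w$ must be of the form $s_i u$ (equivalently $u = s_i w$) with $l(s_i u) = l(u)+1$, and conversely that whenever $l(s_i w) = l(w)+1$ the pair $w \lessdot s_i w$ is a covering relation. First I would unwind the definition of $\leq_L$: by Definition \ref{orders}, $u \leq_L w$ means there is a chain $u = w_0, w_1=s_{i_1}u, \ldots, w_k = s_{i_k}\cdots s_{i_1}u = w$ with $l(w_j) = l(u)+j$. Thus each single step $w_{j-1} \to w_j$ multiplies on the left by a simple reflection and increases the length by exactly one. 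This immediately shows that an arrow (covering relation) in $\HH(W_Q,\leq_L)$ can only be of the form $w \to s_i w$ with $l(s_i w) = l(w)+1$: indeed, given a covering relation $u \lessdot w$, pick such a chain realizing $u \leq_L w$; since nothing strictly lies between $u$ and $w$ in the order, the chain must have length $k=1$, so $w = s_{i_1} u$ with $l(w) = l(u)+1$. Writing $i = i_1$, this is exactly the asserted form.

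\medskip

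\textbf{The converse direction.} Conversely, suppose $l(s_i w) = l(w) + 1$. Then by definition $w \leq_L s_i w$ (taking the length-one chain $w, s_i w$), and $w \neq s_i w$. I must check this is a \emph{covering} relation, i.e.\ that no $v$ satisfies $w <_L v <_L s_i w$. If such $v$ existed, then from $w \leq_L v$ and $v \leq_L s_i w$ one gets $l(w) < l(v) < l(s_i w) = l(w)+1$, which is impossible since lengths are integers. Hence $w \lessdot s_i w$. The two directions together give precisely the description in the statement, after relabeling: writing the longer element generically as $w$, its predecessor is $s_i w$ when $l(w) > l(s_i w)$, giving the arrow $w \to s_i w$; and writing the shorter element generically as $w$, its successor is $s_i w$ when $l(w) < l(s_i w)$, giving the arrow $s_i w \to w$.

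\medskip

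\textbf{Main obstacle.} This lemma is essentially bookkeeping around the definition of the weak order, so there is no serious obstacle; the one point requiring a little care is making sure the Hasse quiver convention (arrows point from larger to smaller, as set by the convention $\HH(\sttilt\La)$ draws arrows for left mutations $T \to \mu_X^-(T)$ with $T > \mu_X^-(T)$) is matched correctly to the length comparison, so that the arrow in $\HH(W_Q,\leq_L)$ goes from the element of larger length to the element of smaller length — which is why the statement reads $w \to s_i w$ precisely when $l(w) > l(s_i w)$. Since $\leq_L$ is a graded poset with rank function $l$, every covering relation changes length by exactly one, and the argument above is complete.
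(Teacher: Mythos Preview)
Your argument is correct and is precisely the unwinding that the paper has in mind: the paper does not give a proof, merely remarking that the assertion is immediate from the definition of the weak order. Your care with the Hasse-quiver convention (arrows from larger to smaller, matching the convention for $\HH(\sttilt\La)$) and the observation that $\leq_L$ is graded by $l$ make the verification explicit, but there is nothing to add or correct.
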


\begin{exam}
(a) Let $Q$ be a quiver of type $A_2$. 
Then $\HH(W_Q,\leq_L)$ is given as follows.

\[\xymatrix@C20pt@R20pt{&{\overset{132}{}}\ar[ld]&{\overset{312}{}}\ar[l]&\\
\overset{123}{}&&&{\overset{321}{}}\ar[ul]\ar[dl]\\
&{\overset{213}{}}\ar[lu]&{\overset{231}{}}\ar[l]&}\]

(b) Let $Q$ be a quiver of type $A_3$. 
Then $\HH(W_Q,\leq_L)$ is given as follows.

\[\xymatrix@C15pt@R15pt{
&   &   &   &   &    2341  \ar[lld]   &    &   &   &   &     \\ 
&   &   & 2314\ar[lldd]   &   &         &    & 3241\ar[lld]\ar[llu]   &   &   &     \\ 
&   &   &   &   &    3214 \ar[llu]\ar[lld]     &    &   &   &   &     \\ 
&  2134 \ar[ldd] &   & 3124\ar[lldd]   &   &         &    & 2431\ar[lld]\ar[lluuu]   &   & 3421\ar[lldd]\ar[lluu]   &     \\ 
&   &   &   &   &   2413\ar[lld]       &    &   &   &   &     \\ 
1234 &  1324\ar[l]  &   &2143\ar[lluu]\ar[lldd] &   &         &    & 3412\ar[lld]   &   & 4231\ar[lluu]\ar[lldd]   & 4321\ar[l]\ar[luu]\ar[ldd]     \\ 
&   &   &   &   &    3142\ar[lld]\ar[lluuu]      &    &   &   &   &     \\ 
&1243\ar[luu] &   & 1342\ar[lluu]   &   &         &    & 4213\ar[lld]\ar[lluuu]   &   & 4312\ar[lluu]\ar[lldd]   &     \\ 
&   &   &   &   &4123 \ar[lld]     &    &   &   &   &     \\ 
&   &   & 1423 \ar[lluu] &   &         &    &4132\ar[llu]\ar[lld]   &   &   &   \\
&   &   &   &   &   1432\ar[llu]\ar[lluuu]      &    &   &   &   &  }\]

\end{exam}

Next we give the following observation.

\begin{lemm}\label{length}
Let $w\in W_Q$ and $i\in Q_0$.
\begin{itemize}
\item[(i)]If $l(w)<l(s_iw)$, then $I_iI_w=I_{s_iw}\subsetneq I_{w}$ and we have a left mutation $\mu_i^-(I_w,P_w)$. 
\item[(ii)]If $l(w)>l(s_iw)$, then $I_{i}I_w=I_{w}\subsetneq I_{s_iw}$ and we have a right mutation $\mu_i^+(I_w,P_w)$. 
\end{itemize}
\end{lemm}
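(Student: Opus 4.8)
\textbf{Proof plan for Lemma \ref{length}.}

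The plan is to reduce everything to facts already established about the $I_i$ and the bijection $W_Q \to \langle I_1,\ldots,I_n\rangle$. First I would recall from Theorem \ref{main2} that for a reduced expression $w = s_{i_1}\cdots s_{i_k}$ one has $I_w = I_{i_1}\cdots I_{i_k}$, independently of the chosen reduced expression. For part (i), assume $l(w) < l(s_iw)$. Then $s_i w$ has a reduced expression beginning with $s_i$, namely $s_i w = s_i s_{i_1}\cdots s_{i_k}$ where $s_{i_1}\cdots s_{i_k}$ is reduced for $w$; hence by Theorem \ref{main2} directly $I_{s_iw} = I_i I_w$. It remains to see that the inclusion $I_i I_w \subseteq I_w$ is strict. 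If instead $I_iI_w = I_w$, then $I_{s_iw} = I_w$, and since both $I_w$ and $I_{s_iw}$ lie in $\langle I_1,\ldots,I_n\rangle$, the injectivity half of Theorem \ref{main2} (equivalently, of Theorem \ref{number}) would force $w = s_iw$, a contradiction. So $I_iI_w = I_{s_iw} \subsetneq I_w$. Then $I_iI_w \ne I_w$ is exactly the hypothesis of Lemma \ref{left mut}, which gives the left mutation $\mu^-_{e_iI_w}(I_w) \cong I_iI_w$; in the language of Theorem \ref{mutation} this is $\mu_i(I_w,P_w) \cong (I_{s_iw},P_{s_iw})$, and since $l(w) < l(s_iw)$ we have $I_{s_iw} \subsetneq I_w$, i.e. $\Fac I_{s_iw}$ is properly contained in $\Fac I_w$ after passing through the mutation, so this mutation is a left mutation $\mu_i^-(I_w,P_w)$ — alternatively one simply invokes Definition-Theorem \ref{air mutation}(ii) together with Lemma \ref{bongartz}, which already identifies $\mu^-_{e_iT}(T)$ as a \emph{left} mutation whenever $I_iT \ne T$.

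For part (ii), assume $l(w) > l(s_iw)$ and set $u := s_iw$, so that $l(u) < l(s_iu)$ and $s_i u = w$. Applying part (i) to $u$ in place of $w$ gives $I_i I_u = I_{s_iu} = I_w \subsetneq I_u = I_{s_iw}$, which is the first assertion of (ii) after renaming. For the mutation statement, part (i) applied to $u$ produces a left mutation $\mu_i^-(I_u,P_u) \cong (I_w,P_w)$; by Definition-Theorem \ref{air mutation}(i) mutation is an involution between the two support $\tau$-tilting pairs sharing a fixed almost complete summand, and a left mutation in one direction is a right mutation in the other. Hence $(I_u,P_u) = \mu_i^+(I_w,P_w)$ is a right mutation of $(I_w,P_w)$, which is what (ii) claims.

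I do not expect any genuine obstacle here: the lemma is essentially bookkeeping that repackages Theorem \ref{main2}, Lemma \ref{left mut}, Lemma \ref{bongartz} and the mutation formalism of Definition-Theorem \ref{air mutation} and Theorem \ref{mutation}. The one point that needs a little care is making sure the strictness of the inclusions is deduced from injectivity of $w \mapsto I_w$ (Theorem \ref{number}) rather than asserted, and being consistent about the direction convention: a left mutation decreases in the partial order of $\sttilt\La$ (so $T > T'$, i.e. $\Fac T \supsetneq \Fac T'$), which matches $I_{s_iw} \subsetneq I_w$ when $l(s_iw) > l(w)$ — and this in turn is compatible with the left weak order via Lemma \ref{right hasse}. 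I would state the direction convention explicitly once at the start of the proof to avoid any sign ambiguity, then let the two cases fall out as above.
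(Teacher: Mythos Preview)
Your approach is essentially identical to the paper's: deduce (i) from Theorem \ref{main2} and Lemma \ref{left mut}, then reduce (ii) to (i) by setting $u := s_iw$ and using that mutation is involutive. One small slip: in (ii), applying (i) to $u$ yields $I_iI_u = I_w$, not $I_iI_w = I_w$ ``after renaming''; you still need the observation $I_i^2 = I_i$ (immediate since $1-e_i \in (1-e_i)\La(1-e_i)$) to get $I_iI_w = I_i(I_iI_u) = I_iI_u = I_w$, a step the paper also leaves implicit.
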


\begin{proof}
Let $w=s_{i_1}\cdots s_{i_k}$ be a reduced expression.
If $l(s_iw)>l(w)$, then $s_iw=s_is_{i_1}\cdots s_{i_k}$ is a reduced
expression. 
Thus, we have $I_iI_w=I_{s_iw}\subsetneq I_{w}$ by Theorem \ref{main2}, and we get a left mutation $\mu_i^-(I_w,P_w)$ by Lemma \ref{left mut}.

On the other hand, if $l(s_iw)<l(w)$, then $u:=s_iw$ satisfies $l(u)<l(s_iu)$. 
Then, similarly, we get a left mutation $\mu_i^-(I_u,P_u)=(I_{s_iu},P_{s_iu})=(I_w,P_w)$, 
or equivalently $\mu_i^+(I_w,P_w)=(I_{s_iw},P_{s_iw}).$
\end{proof}

Summarizing previous results,  we give the following equivalent conditions that a left (or right) mutation occurs. 

\begin{cor}\label{equivalent}
Let $w\in W_Q$ and $i\in Q_0$.
\begin{itemize}
\item[(a)] The following conditions are equivalent $:$
\begin{itemize}
\item[(i)] $l(w)<l(s_iw)$. 
\item[(ii)] $I_iI_w\neq I_{w}$.
\item[(iii)] We have a left mutation $\mu_i^-(I_w,P_w)$.
\end{itemize}
\item[(b)] The following conditions are equivalent $:$
\begin{itemize}
\item[(i)] $l(w)>l(s_iw)$. 
\item[(ii)] $I_iI_w= I_{w}$.
\item[(iii)] We have a right mutation $\mu_i^+(I_w,P_w)$.
\end{itemize}
\end{itemize}
\end{cor}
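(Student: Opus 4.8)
The plan is to assemble Corollary~\ref{equivalent} from the results already established, since essentially all the content is contained in Lemma~\ref{length} together with Corollary~\ref{tensor}, Lemma~\ref{str of I} and Lemma~\ref{left mut}. The statement is a clean packaging of equivalences, so the proof should be short: for part (a) I would prove the cycle of implications (i)$\Rightarrow$(ii)$\Rightarrow$(iii)$\Rightarrow$(i), and part (b) then follows by taking negations, since for fixed $w$ and $i$ exactly one of $l(w)<l(s_iw)$ and $l(w)>l(s_iw)$ holds (they can never be equal, as $s_i$ is an involution and $l$ changes parity under multiplication by a generator).

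First I would handle (a)(i)$\Rightarrow$(ii): this is exactly the first assertion of Lemma~\ref{length}(i), which gives $I_iI_w=I_{s_iw}\subsetneq I_w$; in particular $I_iI_w\neq I_w$. Next, (a)(ii)$\Rightarrow$(iii): assuming $I_iI_w\neq I_w$, Lemma~\ref{left mut} (applied to $T=I_w\in\langle I_1,\ldots,I_n\rangle$) produces the left mutation $\mu^-_{e_iI_w}(I_w)\cong I_iI_w$, which in the notation of Theorem~\ref{mutation} is precisely $\mu_i^-(I_w,P_w)$. Finally, (a)(iii)$\Rightarrow$(i): if instead $l(w)>l(s_iw)$, then Lemma~\ref{length}(ii) tells us that $\mu_i(I_w,P_w)$ is a \emph{right} mutation $\mu_i^+(I_w,P_w)$; by Definition-Theorem~\ref{air mutation}(i) exactly one of left or right mutation occurs for the relevant almost complete pair, so a left mutation $\mu_i^-(I_w,P_w)$ cannot exist, a contradiction. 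Hence (iii) forces (i), closing the cycle.

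For part (b) I would simply observe that conditions (b)(i), (b)(ii), (b)(iii) are the logical negations (for fixed $w$, $i$) of (a)(i), (a)(ii), (a)(iii) respectively: negating $l(w)<l(s_iw)$ gives $l(w)>l(s_iw)$ since equality is impossible; negating $I_iI_w\neq I_w$ gives $I_iI_w=I_w$; and negating ``a left mutation occurs'' gives ``a right mutation occurs'' by Definition-Theorem~\ref{air mutation}. Since the three conditions in (a) are equivalent, so are their negations, giving (b). Alternatively one can cite Lemma~\ref{length}(ii) directly for (b)(i)$\Rightarrow$(ii) and (b)(i)$\Rightarrow$(iii), and then argue as before.

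I do not anticipate a genuine obstacle here; the only point requiring a little care is making explicit that $l(w)<l(s_iw)$ and $l(w)>l(s_iw)$ are exhaustive and mutually exclusive alternatives (a standard Coxeter-group fact, following from $s_i^2=1$ and the length function changing by $\pm1$ under right or left multiplication by a generator), so that the equivalences in (a) and (b) can be played off against each other via the ``exactly one of left/right mutation occurs'' statement of Definition-Theorem~\ref{air mutation}(i). Everything else is a direct appeal to Lemma~\ref{length} and Lemma~\ref{left mut}.
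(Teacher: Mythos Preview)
Your proposal is correct and follows essentially the same route as the paper: the paper also reduces to (a), proves the cycle (i)$\Rightarrow$(ii)$\Rightarrow$(iii)$\Rightarrow$(i) via Lemma~\ref{length} and Lemma~\ref{left mut}, and closes with the same contrapositive argument using Definition-Theorem~\ref{air mutation}. Your treatment is slightly more explicit about why (b) follows by negation, but the substance is identical.
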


\begin{proof}
It is enough to show (a). 

By Lemma \ref{left mut} and \ref{length}, (i) implies (ii) and (ii) implies (iii). 
Assume that we have a left mutation $\mu_i^-(I_w,P_w)$. If $l(w)>l(s_iw)$, then we get a right mutation $\mu_i^+(I_w,P_w)$ by Lemma \ref{length}, which contradicts Definition-Theorem \ref{air mutation}. 
Thus (iii) implies (i).
\end{proof}

Finally we give the following results.

\begin{theorem}\label{main3}
The bijection in $W_Q\to\sttilt\La$ in Theorem \ref{number} gives an isomorphism of partially ordered sets 
$(W_Q,\leq_L)$ and $(\sttilt\La,\leq)^{\op}$.
\end{theorem}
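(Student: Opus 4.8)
The plan is to show that the bijection $W_Q \to \sttilt\La$, $w \mapsto I_w$, and its inverse are both order-reversing maps between $(W_Q,\leq_L)$ and $(\sttilt\La,\leq)$, so that together they give an isomorphism $(W_Q,\leq_L) \cong (\sttilt\La,\leq)^{\op}$. Since both partial orders are generated by their Hasse quivers (for $W_Q$ by definition of the weak order, and for $\sttilt\La$ by Theorem \ref{Hasse-mut}), it suffices to check that the bijection carries the Hasse quiver $\HH(W_Q,\leq_L)$ to the Hasse quiver $\HH(\sttilt\La)$ with arrows reversed, i.e. that an arrow $w \to s_i w$ in $\HH(W_Q,\leq_L)$ (which by Lemma \ref{right hasse} occurs exactly when $l(w) > l(s_i w)$) corresponds to an arrow $I_{s_iw} \to I_w$ in $\HH(\sttilt\La)$, and conversely.

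First I would recall that by Theorem \ref{Hasse-mut}, the arrows of $\HH(\sttilt\La)$ are precisely the left mutations $T \to \mu_X^-(T)$, and that by Theorem \ref{mutation} every arrow out of $I_w$ in $\HH(\sttilt\La)$ is of the form $I_w \to I_{s_iw}$ for some $i \in Q_0$ — indeed the neighbours of $I_w$ in the (undirected) support $\tau$-tilting quiver are exactly $\{I_{s_iw} : i \in Q_0\}$. So the combinatorial structure of the two quivers already matches as undirected graphs via $w \leftrightarrow I_w$; the only thing to verify is that the orientations are opposite. For this, fix $w$ and $i$ with $l(w) > l(s_iw)$. By Lemma \ref{right hasse} this gives an arrow $w \to s_i w$ in $\HH(W_Q,\leq_L)$. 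On the other side, Corollary \ref{equivalent}(b) says $l(w) > l(s_iw)$ is equivalent to $I_iI_w = I_w$, equivalently (by part (a)) to the statement that there is \emph{no} left mutation $\mu_i^-(I_w,P_w)$, so that the mutation $\mu_i(I_w,P_w)$ is a \emph{right} mutation $\mu_i^+(I_w,P_w) \cong (I_{s_iw},P_{s_iw})$; equivalently, setting $u = s_iw$ we have $l(u) < l(s_iu)$, so $I_iI_u = I_u \neq$ wait — rather, applying Corollary \ref{equivalent}(a) to $u$, there is a left mutation $\mu_i^-(I_u,P_u) \cong (I_{s_iu},P_{s_iu}) = (I_w,P_w)$. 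Hence there is an arrow $I_u = I_{s_iw} \to I_w$ in $\HH(\sttilt\La)$. Thus the arrow $w \to s_iw$ in $\HH(W_Q,\leq_L)$ corresponds to the reversed arrow $I_{s_iw} \to I_w$ in $\HH(\sttilt\La)$, as desired; the converse direction is symmetric.

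Having matched the Hasse quivers (one being the other with all arrows reversed), I would conclude as follows: a relation $u \leq_L w$ holds iff there is a directed path from $u$ to $w$ in $\HH(W_Q,\leq_L)$, which happens iff there is a directed path from $I_w$ to $I_u$ in $\HH(\sttilt\La)$, which (again by Theorem \ref{Hasse-mut}, since $\HH(\sttilt\La)$ is the Hasse quiver of the poset $\sttilt\La$) holds iff $I_u \leq I_w$ in $\sttilt\La$, i.e. iff $I_w \leq^{\op} I_u$. Therefore $w \mapsto I_w$ is an isomorphism $(W_Q,\leq_L) \xrightarrow{\ \sim\ } (\sttilt\La,\leq)^{\op}$.

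The main obstacle — and the only genuinely substantive point — is establishing that the undirected support $\tau$-tilting quiver has \emph{exactly} the neighbours $\{I_{s_iw}\}_{i \in Q_0}$ around each vertex $I_w$ with the correct multiplicities, so that no extra arrows appear; but this is already delivered by Theorem \ref{mutation} (the transitive free $W_Q$-action realizing each mutation $\mu_i$ as $s_i$), together with Definition-Theorem \ref{air mutation}(i) guaranteeing that each almost complete support $\tau$-tilting pair has exactly two completions. Everything else is a formal transfer of the "Hasse quiver determines the poset" principle through the bijection, using Corollary \ref{equivalent} to pin down the direction of each arrow. One should also double-check the well-definedness subtlety that distinct $w$ give distinct $I_w$ (Theorem \ref{number}), so the vertex sets are genuinely in bijection; but that is already in hand.
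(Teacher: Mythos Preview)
Your approach is essentially the same as the paper's: reduce to showing that the bijection $w \mapsto I_w$ identifies $\HH(W_Q,\leq_L)$ with $\HH(\sttilt\La)^{\op}$ as quivers, using Lemma \ref{right hasse} and Corollary \ref{equivalent} to match the arrows; the paper's proof is a terser version of your middle paragraph.

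One bookkeeping slip in your final paragraph: in both Hasse quivers the arrows point from larger to smaller elements (for $W_Q$, from longer to shorter words by Lemma \ref{right hasse}; for $\sttilt\La$, from $T$ to its left mutation). Hence $u \leq_L w$ corresponds to a directed path from $w$ to $u$ in $\HH(W_Q,\leq_L)$, not from $u$ to $w$. Carrying the corrected direction through the arrow-reversal you (correctly) established yields $u \leq_L w \iff I_w \leq I_u$, which is the order-reversing statement required. As written, your chain of equivalences produces $u \leq_L w \iff I_u \leq I_w$, i.e.\ an order-\emph{preserving} map, contradicting the theorem. This does not affect the substance of the argument, since the Hasse quiver identification in your middle paragraph is correct and already suffices (as the paper notes, ``the Hasse quivers determine the partial orders'' for these finite posets).
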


\begin{proof}
It is enough to show that $\HH(W_Q,\leq_L)$ coincides with $\HH(\sttilt\La)^{\op}$ since the Hasse quivers determine the partial orders. 

By Theorem \ref{Hasse-mut} and Corollary \ref{equivalent}, arrows starting from $I_w$ are given by $I_w\to I_iI_w$ in $\HH(\sttilt\La)$, where $i\in Q_0$ satisfies $l(w)<l(s_iw)$.

On the other hand, Lemma \ref{right hasse} implies that arrows to $w$ are given by $s_iw\to w$ in $\HH(W_Q,\leq_L)$, where  $s_i$ satisfies $l(w)<l(s_iw)$. 
Therefore $\HH(\sttilt\La)^{\op}$ coincides with $\HH(W_Q,\leq_L)$ and the statement follows. 
\end{proof}

We remark that the \emph{Bruhat order} on $W_Q$ coincides with the reverse inclusion relation on $\langle I_1,\cdots I_n\rangle$ \cite[Lemma 6.5]{ORT}. 


\section{$g$-matrices and cones}\label{Cone of $g$-vectors}
In this section, we study \emph{$g$-vectors} \cite{DK}, which is also called \emph{index} \cite{P} (see also \cite{AR1}), and \emph{$g$-matrices} of support $\tau$-tilting modules. 
We give a complete description of $g$-matrices in terms of reflection transformations. 
Moreover, we study cones defined by $g$-matrices, which provide a geometric realization of support $\tau$-tilting modules, and we show that they coincide with chambers of the associated root system. 

Throughout this section, unless otherwise specified, let $Q$ be a Dynkin quiver with vertices $Q_0=\{1,\ldots, n\}$, $\La$ the preprojective algebra of $Q$ and $I_i=\La(1-e_i)\La$ for $i\in Q_0$. 

First, we define $g$-vectors in our setting. We refer to \cite[section 5]{AIR} for a background of this notion.

\begin{defi}\label{def g-vector} 
By Krull-Schmidt theorem, we identify the set of isomorphism classes of projective $\La$-modules with submonoid $\mathbb{Z}^n_{\geq 0}$ of the free abelian group $\mathbb{Z}^n$. 

For a $\La$-module $X$, take a minimal projective presentation
\[\xymatrix{P_1(X)\ar[r]& P_0(X)\ar[r]&X \ar[r]&0}\]
and let 
$g(X)=(g_1(X),\cdots,g_n(X))^t:=P_0(X)-P_1(X)\in\mathbb{Z}^n$, where $t$ denotes the transpose matrix.
Then, for any $w\in W_Q$ and $i\in Q_0$, we define a $g$-\emph{vector} by  
$$\mathbb{Z}^n\ni g^i(w)=\left\{\begin{array}{cl} g(e_iI_w) & {\rm if}\ e_iI_w\neq 0\\ 
(0,\dots,0,\overset{\sigma(i)}{-1},0,\cdots,0)^t&{\rm if}\ e_iI_w= 0,\end{array}\right.
$$
where $\sigma:Q_0\to Q_0$ is a Nakayama permutation of $\La$ (i.e. $D(\La e_{\sigma(i)})\cong e_{i}\La$)). 

We define a \emph{$g$-matrix} of support $\tau$-tilting $\La$-module $I_w$ by 
$$g(w):=(g^1(w),\cdots,g^n(w))\in{GL}_n(\mathbb{Z})$$ 
and its \emph{cone} by 
$$C(w):=\{a_1g^1(w)+\cdots+a_ng^n(w)\ |\ a_i\in \mathbb{R}_{>0}\}.$$

Next, we define $g$-vectors for extended Dynkin cases. We follow Notation \ref{notation}. 
By Theorem \ref{main2}, there exists a bijection $$W_{\widetilde{Q}}\to\langle\I_0, \I_1,\ldots,\I_n\rangle,w\mapsto \I_w =\I_{i_1}\I_{i_2}\cdots \I_{i_k},$$ where $w=s_{i_1}\cdots s_{i_k}$ is a reduced 
expression of $w$. Note that $\I_w$ is a tilting $\wLa$-modules of projective dimension at most one \cite{IR} (hence $e_i\I_w\neq0$ for any $i\in \widetilde{Q}_0$). 
For any $w\in W_{\widetilde{Q}}$ and $i\in \widetilde{Q}_0$, 
take a minimal projective presentation of $\wLa$-module $e_i\I_w$ 
$$\xymatrix{P_1(e_i\I_w)\ar[r]& P_0(e_i\I_w)\ar[r]&e_i\I_w \ar[r]&0.}$$
Then we define a $g$-vector by 
$$\tg^i(w)=(g_0(e_i\I_w),\cdots,g_n(e_i\I_w))^t:=P_0(e_i\I_w)-P_1(e_i\I_w)\in\mathbb{Z}^{n+1},$$  
and we define a $g$-matrix by  
$$\tg(w):=(\tg^0(w),\cdots,\tg^n(w))\in{GL}_{n+1}(\mathbb{Z}).$$
\end{defi}

We use these notations in this section.  We also denote by $\overline{(-)}:=-\otimes_{\wLa}\La$ for simplicity. 

Then, we give the following results. 

\begin{prop}\label{minimal}
Let $w\in W_Q$ and $i\in Q_0$. 
Take a minimal projective presentation of $\wLa$-module  $e_i\I_w$ 
\begin{equation}\label{Iwseq1}
\xymatrix@C30pt@R30pt{0\ar[r]&P_1\ar[r]^{f_1}& P_0\ar[r]^{f_0}&e_i\I_w\ar[r]& 0.}
\end{equation}
Then, applying the functor $-\otimes_{\wLa}\La$ to $(\ref{Iwseq1})$, we have the following exact sequence 
\begin{equation*}\label{Iwseq4}
\xymatrix@C30pt@R30pt{0\ar[r]^{}&\ar[r]^{}\nu^{-1}(e_i\La/e_iI_w)&\overline{P_1}\ar[r]^{\overline{f_1}}& \overline{P_0}\ar[r]^{\overline{f_0}}&e_iI_w \ar[r]&0},
\end{equation*}
where $\nu^{-1}:=\Hom_\La(D\La,-).$ 
Moreover, if $e_iI_w\neq0$, then the sequence 
\begin{equation*}\label{Iwseq2}\xymatrix{\overline{P_1}\ar[r]^{\overline{f_1}}& \overline{P_0}\ar[r]^{\overline{f_0}}&e_iI_w \ar[r]&0}
\end{equation*}
is a minimal projective presentation of $\La$-module $e_iI_w$.
\end{prop}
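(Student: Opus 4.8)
The plan is to analyze what happens when the exact functor-failure is measured: the sequence $(\ref{Iwseq1})$ is a short exact sequence since $\I_w$ has projective dimension at most one as a $\wLa$-module (by the cited result of \cite{IR}), so $e_i\I_w$ also has projective dimension at most one. Applying $-\otimes_{\wLa}\La$ to $(\ref{Iwseq1})$ I would first identify the new kernel term as $\Tor_1^{\wLa}(e_i\I_w,\La)$. The key observation is that $\La=\wLa/\langle e_0\rangle$, so $-\otimes_{\wLa}\La$ is the functor killing $e_0$, and $\Tor_1^{\wLa}(M,\wLa/\langle e_0\rangle)$ can be computed from the projective resolution of $M$; concretely it is the kernel of $\overline{f_1}$. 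So the first step is to write down the six-term sequence
\[\xymatrix@C20pt{0\ar[r]&\Tor_1^{\wLa}(e_i\I_w,\La)\ar[r]&\overline{P_1}\ar[r]^{\overline{f_1}}&\overline{P_0}\ar[r]^{\overline{f_0}}&e_iI_w\ar[r]&0}\]
where I use that $e_i\I_w\otimes_{\wLa}\La\cong e_i(\I_w/\langle e_0\rangle)=e_iI_w$ by Notation \ref{notation}.

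The second step is to identify $\Tor_1^{\wLa}(e_i\I_w,\La)$ with $\nu^{-1}(e_i\La/e_iI_w)$. Here I would use the short exact sequence $0\to e_iI_w\to e_i\wLa/\langle e_0\rangle\text{-part}\to\cdots$ — more precisely, the exact sequence defining $S_i$ over $\wLa$ analogous to $(\ref{eI s_i})$, together with the self-injectivity of $\La$. The cleanest route is: since $\I_w$ is a tilting $\wLa$-module, $\Tor_1^{\wLa}(\I_w,\La)$ is related by duality to $\Ext^1$-data; applying $\nu^{-1}=\Hom_\La(D\La,-)$ (the inverse Nakayama functor) and using that over the selfinjective algebra $\La$ we have $\nu(e_j\La)\cong D(\La e_j)$, one gets that the torsion term is $\nu^{-1}$ applied to the cokernel of $e_iI_w\hookrightarrow e_i\La$, which is exactly $e_i\La/e_iI_w$. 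I expect this identification to be the main obstacle: it requires carefully tracking how tensoring over $\wLa$ with $\La$ interacts with the projective presentation $(\ref{La I})$ of $e_iI_i$ over $\La$, and invoking the description $\Ker p_2\cong S_{\sigma(i)}$ from $(\ref{simple sigma resolution})$ to pin down the Nakayama twist. One may alternatively deduce it by comparing with the sequence $(\ref{tau I_i})$ obtained by applying $\nu$, since $\tau=\nu\Omega^2$ on modules with no projective summands.

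The third and final step handles minimality. Assuming $e_iI_w\neq 0$, I must show $\overline{P_1}\to\overline{P_0}\to e_iI_w\to 0$ is still a \emph{minimal} projective presentation, i.e. that $\overline{P_1}$ and $\overline{P_0}$ share no common indecomposable summand and that $\overline{f_1}$, $\overline{f_0}$ are radical maps. Since $(\ref{Iwseq1})$ is minimal over $\wLa$, $f_0$ and $f_1$ are radical maps into projectives, hence $\overline{f_0}=f_0\otimes\La$ and $\overline{f_1}=f_1\otimes\La$ remain radical (tensoring with $\La=\wLa/\langle e_0\rangle$ sends $\rad\wLa$ into $\rad\La$). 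The only subtlety is whether some $e_0\wLa$-summand of $P_0$ or $P_1$ dies, but this cannot destroy minimality: $\overline{e_0\wLa}=e_0\wLa\otimes_{\wLa}\La=0$, so such summands simply disappear from both sides, and no new common summand is created because $\wLa$ being basic means distinct $e_j\wLa$ ($j\neq 0$) stay non-isomorphic after $\overline{(-)}$. Thus $\overline{P_0}=P_0(e_iI_w)$ and $\overline{P_1}=P_1(e_iI_w)$, completing the proof.
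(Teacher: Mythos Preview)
Your Steps 1 and the overall structure match the paper's proof. The genuine gap is in Step 2: you correctly identify the kernel as $\Tor_1^{\wLa}(e_i\I_w,\La)$ and correctly anticipate that identifying it with $\nu^{-1}(e_i\La/e_iI_w)$ is the crux, but the routes you suggest do not reach it. The sequences $(\ref{La I})$, $(\ref{simple sigma resolution})$, and $(\ref{tau I_i})$ all concern $I_i$ for a single $i$, not $I_w$ for an arbitrary $w$, so tracking them does not compute $\Tor_1^{\wLa}(e_i\I_w,\La)$. The paper's computation is short but uses a tool you never invoke: the 2-Calabi-Yau property of $\wLa$. Namely, the paper chains together
\[
\Tor_1^{\wLa}(e_i\I_w,\La)\cong D\Ext^1_{\wLa}(e_i\I_w,D\La)\cong D\Ext^2_{\wLa}(e_i\wLa/e_i\I_w,D\La)\cong \Hom_{\wLa}(D\La,e_i\wLa/e_i\I_w),
\]
where the first isomorphism is standard Tor--Ext duality \cite{CE}, the second is the degree shift coming from $0\to e_i\I_w\to e_i\wLa\to e_i\wLa/e_i\I_w\to 0$, and the third is the 2-CY duality for $\wLa$. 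Finally $e_i\wLa/e_i\I_w\cong e_i\La/e_iI_w$ is already a $\La$-module, so the last term is $\nu^{-1}(e_i\La/e_iI_w)$. Without the 2-CY step there is no evident way to pass from an Ext (or Tor) over $\wLa$ to a Hom over $\La$, and self-injectivity of $\La$ alone does not supply it.

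Your Step 3 is also incomplete, and its completion actually depends on Step 2. You correctly argue that $\Im\overline{f_1}\subseteq\rad\overline{P_0}$ (so $\overline{f_0}$ is a projective cover; this is also what the paper says). But minimality also requires that $\overline{P_1}\to\Im\overline{f_1}$ be a projective cover, i.e.\ $\ker\overline{f_1}\subseteq\rad\overline{P_1}$. Over $\wLa$ this was automatic because $f_1$ was injective; over $\La$ the map $\overline{f_1}$ acquires the kernel $\nu^{-1}(e_i\La/e_iI_w)$, and your remarks about $e_0\wLa$-summands vanishing and ``no new common summand'' do not address whether this kernel could contain a full indecomposable summand of $\overline{P_1}$. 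The paper closes this by observing that when $e_iI_w\neq 0$ the module $\nu^{-1}(e_i\La/e_iI_w)$ is not projective, hence the inclusion into $\overline{P_1}$ is a radical map---and this uses precisely the identification from Step 2.
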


\begin{proof}
Since $e_i\neq e_0$ and $\La=\widetilde{\Lambda}/{ \langle e_0\rangle}$, we have $e_i\I_w\otimes_{\wLa}\La\cong e_i\I_w/e_i\I_w\langle e_0 \rangle\cong e_i\I_w/e_i\langle e_0 \rangle\cong  e_i I_w$. 
Then, by applying the functor $-\otimes_{\wLa}\La$ to (\ref{Iwseq1}), we have the following exact sequence
$$\xymatrix{0\ar[r]^{}&\ar[r]^{\ \ \ \ \ \ \ g}\Tor_1^{\wLa}(e_i\I_w,\La)&\overline{P_1}\ar[r]^{\overline{f_1}}& \overline{P_0}\ar[r]^{\overline{f_0}}&e_iI_w \ar[r]&0}.$$

On the other hand, we have the following exact sequence
\begin{equation*}
\xymatrix@C30pt@R30pt{0\ar[r]&e_i\I_w\ar[r]^{}& e_i\wLa\ar[r]^{}&e_i\wLa/e_i\I_w\ar[r]& 0.}
\end{equation*}

Then, we obtain $\La$-module isomorphisms 
\begin{eqnarray*}
\Tor_1^{\wLa}(e_i\I_w,\La)&\cong&D\Ext^1_{\wLa}(e_i\I_w,D\La)\ \ \ \ \ \ \ \   (\mbox{\cite{CE}})\\
&\cong& D\Ext^2_{\wLa}(e_i\wLa/e_i\I_w,D\La)\\
&\cong& \Hom_{\wLa}(D\La,e_i\wLa/e_i\I_w)\ \ \ \      (\mbox{2-CY\ property})\\
&\cong& \Hom_{\La}(D\La,e_i\La/e_iI_w)\ \ \ \      (e_i\wLa/e_i\I_w\cong e_i\La/e_iI_w)\\
&\cong& \nu^{-1}(e_i\La/e_iI_w).
\end{eqnarray*}

Now assume that $e_iI_w\neq0$. Then, clearly $\nu^{-1}(e_i\La/e_iI_w)$ is not projective and hence $g$ is a radical map. 
Moreover, it is clear that ${\overline{f_0}}$ is a projective cover by the assumption of $f_0$. Thus we obtain the conclusion. 
\end{proof}

\begin{remk}
It is known that, for every non-projective indecomposable module $X$ over a preprojective algebra of Dynkin type, its third syzygy $\Omega^3 X$ is isomorphic to $\nu^{-1}(X)$ \cite{AR2,BBK}.
This property appears in the above proposition.
\end{remk}

Using the above proposition, we give the following key result.

\begin{prop}\label{[Iw]=[Iw]}
For any $w\in W_Q$ and $i\in Q_0$, the $g$-vector 
$g^i(w)$ equals the image of $\tg^i(w)$ under the projection $\mathbb{Z}^{n+1}\to\mathbb{Z}^{n}$ which forgets the 0-th coordinate.
\end{prop}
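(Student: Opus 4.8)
The plan is to reduce the claim about $g$-vectors to the relationship between the minimal projective presentations over $\wLa$ and over $\La$ that Proposition \ref{minimal} has just established. Recall that $g^i(w)$ and $\tg^i(w)$ are by definition the alternating sums $P_0-P_1$ of the terms of a minimal projective presentation, the former of the $\La$-module $e_iI_w$ (or the relevant coordinate vector when $e_iI_w=0$), the latter of the $\wLa$-module $e_i\I_w$. The key point is that the projection $\mathbb{Z}^{n+1}\to\mathbb{Z}^n$ forgetting the $0$-th coordinate corresponds at the level of projectives exactly to the functor $\overline{(-)}=-\otimes_{\wLa}\La$: since $\La=\wLa/\langle e_0\rangle$, we have $\overline{e_j\wLa}\cong e_j\La$ for $j\in Q_0$ and $\overline{e_0\wLa}=0$, so applying $\overline{(-)}$ to a projective $\wLa$-module and then reading off its class in $\mathbb{Z}^n$ is the same as taking its class in $\mathbb{Z}^{n+1}$ and deleting the $0$-th entry.

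First I would treat the case $e_iI_w\neq 0$. Take a minimal projective presentation $P_1\to P_0\to e_i\I_w\to 0$ over $\wLa$; by construction $\tg^i(w)=[P_0]-[P_1]\in\mathbb{Z}^{n+1}$. By Proposition \ref{minimal}, applying $\overline{(-)}$ yields $\overline{P_1}\to\overline{P_0}\to e_iI_w\to 0$, and this is a \emph{minimal} projective presentation of the $\La$-module $e_iI_w$. Hence $g^i(w)=[\overline{P_0}]-[\overline{P_1}]\in\mathbb{Z}^n$. Since $[\overline{P_j}]$ is obtained from $[P_j]$ by deleting the $0$-th coordinate, linearity of this projection gives that $g^i(w)$ is the image of $\tg^i(w)$, as required.

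It remains to handle the case $e_iI_w=0$, which is where the only real subtlety lies: here $g^i(w)$ is defined by fiat as $-\mathbf{e}_{\sigma(i)}$ rather than via a projective presentation, so I must compute the image of $\tg^i(w)$ by hand and check it equals $-\mathbf{e}_{\sigma(i)}$. For this I would again start from the minimal projective presentation $0\to P_1\to P_0\to e_i\I_w\to 0$ over $\wLa$ (recall $\I_w$ has projective dimension $\le 1$, so this is a genuine length-two resolution) and apply Proposition \ref{minimal}: the resulting four-term exact sequence $0\to\nu^{-1}(e_i\La/e_iI_w)\to\overline{P_1}\to\overline{P_0}\to e_iI_w\to 0$ now has $e_iI_w=0$, i.e. $e_i\La/e_iI_w=e_i\La$, so it reads $0\to\nu^{-1}(e_i\La)\to\overline{P_1}\to\overline{P_0}\to 0$. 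Since $\La$ is selfinjective, $\nu^{-1}(e_i\La)=\Hom_\La(D\La,e_i\La)$ is the indecomposable projective whose socle matches, namely $e_{\sigma(i)}\La$ up to the Nakayama permutation (using $D(\La e_{\sigma(i)})\cong e_i\La$, so $\nu^{-1}(e_i\La)\cong e_{\sigma(i)}\La$). Thus $[\overline{P_0}]-[\overline{P_1}]=-[\,e_{\sigma(i)}\La\,]=-\mathbf{e}_{\sigma(i)}$ in $\mathbb{Z}^n$, which is exactly the forgetful image of $\tg^i(w)=[P_0]-[P_1]$ (the $0$-th coordinate of $\tg^i(w)$ may be nonzero, but it is discarded). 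The main obstacle is thus getting the identification $\nu^{-1}(e_i\La)\cong e_{\sigma(i)}\La$ and the bookkeeping of the Nakayama permutation correct; once that is pinned down, both cases combine to give the statement, and I would close by remarking that this is precisely the compatibility needed so that $g(w)$ is, coordinatewise, the $\langle e_0\rangle$-quotient of $\tg(w)$.
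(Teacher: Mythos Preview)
Your proof is correct and follows essentially the same route as the paper: invoke Proposition~\ref{minimal}, use that $\overline{(-)}$ realises the projection $\mathbb{Z}^{n+1}\to\mathbb{Z}^n$, and split into the two cases $e_iI_w\neq 0$ and $e_iI_w=0$. The only cosmetic difference is that the paper phrases the case distinction as $P_0'\neq 0$ versus $P_0'=0$ (these are equivalent to yours, since $\Hom_{\wLa}(e_i\I_w,S_j)\cong\Hom_\La(e_iI_w,S_j)$ for $j\neq 0$), and in the degenerate case it identifies $P_1'\cong e_{\sigma(i)}\wLa$ explicitly rather than reading off $[\overline{P_0}]-[\overline{P_1}]=-\mathbf e_{\sigma(i)}$ from the split short exact sequence as you do.
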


\begin{proof}
Take a minimal projective presentation of $\wLa$-module $e_i\I_w$ 
\begin{equation}\label{sequence5}
\xymatrix@C30pt@R30pt{0\ar[r]&P_1\ar[r]^{f_1}& P_0\ar[r]^{f_0}&e_i\I_w\ar[r]& 0.}
\end{equation}

We decompose $P_0=P_0'\oplus (e_0\wLa)^\ell$ and $P_1=P_1'\oplus (e_0\wLa)^m$, where 
$(e_0\wLa)^\ell$ (respectively, $(e_0\wLa)^m$) is a maximal direct summand of $P_0$ (respectively, $P_1$) which belongs to $\add(e_0\wLa)$. 
For $\tg^i(w)=(g_0(e_i\I_w),g_1(e_i\I_w),\cdots,g_n(e_i\I_w))^t$, it is clear that 
$(g_1(e_i\I_w),\ldots,g_n(e_i\I_w))^t$ is determined by $P_0'$ and $P_1'$. 
We will show that it coincides with $g^i(w).$

By Proposition \ref{minimal}, applying $-\otimes_{\wLa}\La$ to (\ref{sequence5}), we have the following exact sequence 
\begin{equation}\label{Iwseq3}
\xymatrix@C30pt@R30pt{0\ar[r]&\nu^{-1}(e_i\La/e_iI_w)\ar[r]&\overline{P_1'}\ar[r]^{\overline{f_1}}& \overline{P_0'}\ar[r]^{\overline{f_0}}&e_iI_w\ar[r]& 0.}
\end{equation}

(i) Assume that $P_0'\neq0$. It implies that $\overline{P_0'}\neq0$ and hence $e_iI_w\neq0$. Therefore, by Proposition \ref{minimal}, the sequence (\ref{Iwseq3}) gives a minimal projective presentation of $e_iI_w$. Thus $g^i(w)$ coincides with $(g_1(e_i\I_w),\ldots,g_n(e_i\I_w))^t$. 

(ii) Assume that $P_0'=0$. 
It implies that $\overline{P_0'}=0$ and hence $e_iI_w=0$. Thus, $g^i(w)=(0,\dots,0,\overset{\sigma(i)}{-1},0,\cdots,0)^t$ by definition. 
On the other hand, by the sequence (\ref{Iwseq3}), we have 
$\overline{P_1'}\cong\nu^{-1}(e_i\La)\cong e_{\sigma(i)}\La$ and hence ${P_1'}\cong e_{\sigma(i)}\wLa$. Thus $\tg^i(w)=(g_0(e_i\I_w),0,\dots,0,\overset{\sigma(i)}{-1},0,\cdots,0)^t$ and we obtain the conclusion.
\end{proof}

Next we introduce the following notations \cite{Bou,BB}.  

\begin{defi}\label{geometric representation}
Let $Q$ be a finite connected acyclic quiver with vertices $Q_0=\{1,\ldots, n\}$. 
For $i,j\in Q_0$, let $m_{ij}:=\sharp\{a\in Q_1|\ s(a)=i, e(a)=j\}+\sharp\{a\in Q_1|\ s(a)=j, e(a)=i\}.$ 
Let ${\bf e}_1,\ldots,{\bf e}_n$ be the canonical basis of $\mathbb{Z}^n$.

The \emph{geometric representation} $\sigma: W_Q\rightarrow {GL}_n(\mathbb{Z})$ of $W_Q$ is defined by  
$$\sigma(s_i)({\bf e}_j)=\sigma_i({\bf e}_j):={\bf e}_j+(m_{ij}-2\delta_{ij}){\bf e}_i,$$
where $\delta_{ij}$ denotes the Kronecker delta.
On the other hand, the \emph{contragradient $\sigma^*: W_Q\rightarrow {GL}_n(\mathbb{Z})$ of the geometric representation}  is defined by 
$$\sigma^*(s_i)({\bf e}_j)=\sigma_i^*({\bf e}_j)=\left\{\begin{array}{cl} {\bf e}_j & i\neq j\\ -{\bf e}_j
+ {\displaystyle\sum_{\begin{smallmatrix}a\in \overline{Q}_1,
s(a)=i\end{smallmatrix}}} {\bf e}_{e(a)}& i=j.\end{array}\right.
$$

For an arbitrary expression $w=s_{i_1}\dots s_{i_k}$ of $w$, 
we define $\sigma(w):=\sigma_{i_k}\dots \sigma_{i_1}$ and $\sigma^*(w)=\sigma^*_{i_k}\dots \sigma^*_{i_1}$. 
\end{defi}



Then, we give the following key result.

\begin{prop}\label{act contra}
For any $w\in W_Q$, we have $g(w)=\sigma^*(w)$.
\end{prop}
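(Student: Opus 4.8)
The plan is to prove $g(w) = \sigma^*(w)$ by induction on $l(w)$, using the mutation formula for $g$-matrices built from the exact sequence in Proposition \ref{minimal} and comparing it with the recursion defining $\sigma^*$. The base case $w = e$ is clear: $I_e = \La$, so each $e_i I_e = e_i\La$ is projective with trivial projective presentation, giving $g^i(e) = \mathbf{e}_i$ and $g(e) = \mathrm{id} = \sigma^*(e)$.

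For the inductive step, write $w = s_j w'$ with $l(w) = l(w') + 1$, so by Corollary \ref{equivalent} we have $I_j I_{w'} = I_w \subsetneq I_{w'}$ and a left mutation occurs at the vertex $j$. The heart of the argument is to extract from Proposition \ref{TI} (and the approximation sequence (\ref{La I}) tensored with $I_{w'}$) the precise change in $g$-vectors: the $j$-th column $g^j(w)$ is replaced using the exact sequence $0 \to e_j I_j I_{w'} \to \bigoplus_{a\in\overline{Q}_1,\,e(a)=j} e_{s(a)}\La \otimes_\La I_{w'} \to$ (correction term), while the columns $g^i(w)$ for $i \neq j$ are, up to the mutation bookkeeping, unchanged — except that the exchange of the summand at vertex $j$ forces a substitution wherever the old $g^j(w')$ appeared. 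Concretely, I expect to show $g^i(w) = g^i(w') + (\text{coefficient})\cdot g^j(w')$ for $i \neq j$ and $g^j(w) = -g^j(w') + \sum_{a\in\overline{Q}_1,\,s(a) \text{ or } e(a) = j}(\cdots)$, and to check that these coefficients are exactly $m_{ji} - 2\delta_{ji}$, i.e. that the transformation $g(w') \mapsto g(w)$ is right multiplication by the matrix of $\sigma^*_j$ in the appropriate convention. Then $g(w) = g(w')\,\sigma^*_j$ as operators, which matches $\sigma^*(w) = \sigma^*_j \cdots$ under the composition convention of Definition \ref{geometric representation}; invoking the inductive hypothesis $g(w') = \sigma^*(w')$ finishes the step.

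A cleaner route — which I would actually prefer — is to bypass the Dynkin mutation combinatorics by working in the extended Dynkin case and using Proposition \ref{[Iw]=[Iw]}. Since $\I_w$ is a classical tilting module over $\wLa$ of projective dimension $\leq 1$, its $g$-matrix $\tg(w)$ is governed by the additivity of indices in triangulated categories / Euler form considerations, and one knows (e.g. from \cite{IR} or a direct Grothendieck-group computation) that for tilting modules coming from the ideals $\I_{i_1}\cdots\I_{i_k}$ the $g$-matrix is the matrix of the corresponding product of reflections acting on $K_0(\proj\wLa) \cong \mathbb{Z}^{n+1}$. Restricting/projecting away the $0$-th coordinate via Proposition \ref{[Iw]=[Iw]} then yields $g(w) = \sigma^*(w)$ on the nose, because the contragredient geometric representation $\sigma^*$ of $W_Q$ is precisely the restriction of the analogous representation of $W_{\widetilde Q}$ to the parabolic generated by $s_1, \dots, s_n$, with the stabilized $0$-th coordinate dropping out.

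The main obstacle I anticipate is pinning down conventions and signs: matching the left-versus-right action ($\sigma(w) = \sigma_{i_k}\cdots\sigma_{i_1}$ reverses order), the column-versus-row placement of $g$-vectors in the matrix $g(w)$, and the Nakayama permutation $\sigma$ appearing in the definition of $g^i(w)$ when $e_i I_w = 0$. In particular I need to verify that the "$-1$ in position $\sigma(i)$" convention for vanishing summands is exactly what the reflection formula produces when $I_w$ drops the $i$-th generator, which uses $\nu^{-1}(e_i\La) \cong e_{\sigma(i)}\La$ as in case (ii) of the proof of Proposition \ref{[Iw]=[Iw]}. Once the dictionary is fixed, the computation that the one-step change equals the action of $\sigma^*_j$ should be a short matrix identity, and the induction goes through routinely.
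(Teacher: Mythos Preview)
Your preferred ``cleaner route'' is exactly the paper's proof: Proposition~\ref{[Iw]=[Iw]} says $g(w)$ is the $1,\ldots,n$ block of $\tg(w)$; \cite[Theorem~6.6]{IR} gives $\tg(w)=\tilde{\sigma}^*(w)$; and since each $\tilde{\sigma}^*_{i_j}$ with $i_j\neq 0$ has $0$-th column $(1,0,\ldots,0)^t$, the product truncates to $\sigma^*_{i_k}\cdots\sigma^*_{i_1}=\sigma^*(w)$.

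Your first, inductive approach is a genuine alternative and can be made to work via the standard exchange relation for $g$-vectors under mutation, but your description of it is slightly muddled. For $i\neq j$ one has $(1-e_j)I_w=(1-e_j)I_{w'}$, so the columns $g^i(w)=g^i(w')$ are literally unchanged --- there is no ``substitution wherever the old $g^j(w')$ appeared''. Only the $j$-th column changes, and the approximation sequence from Proposition~\ref{TI} (together with the additivity of $g$-vectors along exchange triangles, as in \cite[Section~5]{AIR}) gives $g^j(w)=-g^j(w')+\sum_{a\in\overline{Q}_1,\,e(a)=j} g^{s(a)}(w')$. Because $\overline{Q}$ is a double quiver, this sum matches $\sum_{a\in\overline{Q}_1,\,s(a)=j}\mathbf{e}_{e(a)}$ in the definition of $\sigma^*_j$, so the transformation is right-multiplication by $\sigma^*_j$, and with $\sigma^*(s_jw')=\sigma^*(w')\sigma^*_j$ the induction closes. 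The paper's route trades this mutation bookkeeping for the already-established tilting result in \cite{IR}, which is shorter but less self-contained.
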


\begin{proof}
By Proposition \ref{[Iw]=[Iw]}, the $g$-matrix $g(w)=(g^1(w),\cdots,g^n(w))$ is given by 1-st to $n$-th rows and columns of $\tg(w)=(\tg^0(w),\tg^1(w),\cdots,\tg^n(w))$.

On the other hand, by \cite[Theorem 6.6]{IR}, we have $\tg(w)=\tilde{\sigma}^*({w})$, 
where $\tilde{\sigma}^*$ is the contragradient $W_{\widetilde{Q}}\rightarrow {GL}_{n+1}(\mathbb{Z})$ of the geometric representation. 

For $w=s_{i_1}\dots s_{i_k}$, the 0-th column of $\tilde{\sigma}^*_{i_j}$ is $(\overset{}{1},\overset{}{0},\ldots,\overset{}{0})^t$ by the assumption of $s_{i_j}\neq s_0$ for any $1\leq j\leq k$.  
Because $1$-st to $n$-th rows and columns of $\tilde{\sigma}^*_{i_j}$ is equal to ${\sigma}^*_{i_j}$ by definition, we conclude that $g(w)=\sigma^*_{i_k}\dots \sigma^*_{i_1}=\sigma^*(w)$. 
\end{proof}


Finally we give the following settings. 

\begin{defi}
An element of $\Phi:=\{\sigma(w)({\bf e}_i)\}_{i\in Q_0,w\in W_Q}$ is called a \emph{root}. For $a,b\in \mathbb{R}^n$, we write a natural pairing $\langle a,b\rangle:=a\cdot b^t$.  
To each root $x\in\Phi$, we define a hyperplane 
$$H_x:=\{y\in \mathbb{R}^n\ |\ \langle y,x\rangle=0\}.$$
A \emph{chamber} of $\Phi$ is a connected component of $\mathbb{R}^n\setminus \bigcup_{x\in \Phi}H_x$. 
We denote by $C_0$ the chamber $\{a_1{\bf e}_1+\cdots+a_n{\bf e}_n \ |\ a_i\in \mathbb{R}_{>0}\}$. 
Note that chambers are given by 
$\coprod_{w\in W_Q} \sigma^*(w)(C_0)$ and there is a bijection between $W_Q$ and chambers via $w\mapsto \sigma^*(w)(C_0)$.  
\end{defi}

Then we obtain the following conclusion.

\begin{thm}\label{weyl group}
\begin{itemize}
\item[(a)] The set of $g$-matrices of support $\tau$-tilting $\La$-modules coincides with the subgroup $\langle \sigma^*_1,\ldots,\sigma^*_n\rangle$ of $\textrm{GL}(\mathbb{Z}^n)$ generated by $\sigma^*_i$ for all $i\in Q_0$. 
\item[(b)]For $w\in W_Q$, we have 
$$C(w)=\sigma^*(w)(C_0),$$
where $C(w)$ denotes the cone of $I_w$.
In particular, cones of basic support $\tau$-tilting $\La$-modules coincide with  chambers of the associated root system $\Phi$.
\end{itemize}
\end{thm}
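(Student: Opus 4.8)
The plan is to deduce Theorem \ref{weyl group} almost immediately from the work already assembled, with Proposition \ref{act contra} as the linchpin. For part (a), I would argue as follows: by Theorem \ref{number}, every basic support $\tau$-tilting $\La$-module is of the form $I_w$ for a unique $w\in W_Q$, and by Proposition \ref{act contra} its $g$-matrix is exactly $\sigma^*(w)$. Hence the set of $g$-matrices of support $\tau$-tilting $\La$-modules is precisely $\sigma^*(W_Q)=\{\sigma^*(w)\mid w\in W_Q\}$. Since $\sigma^*$ is a group homomorphism $W_Q\to \mathrm{GL}_n(\mathbb{Z})$ and $W_Q$ is generated by $s_1,\dots,s_n$, its image $\sigma^*(W_Q)$ equals the subgroup $\langle \sigma_1^*,\dots,\sigma_n^*\rangle$ of $\mathrm{GL}_n(\mathbb{Z})$ generated by the $\sigma_i^*$. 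This gives (a).

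For part (b), the cone $C(w)$ is by Definition \ref{def g-vector} the positive real span of the columns $g^1(w),\dots,g^n(w)$ of the matrix $g(w)$, i.e. $C(w)=\{g(w)(a_1\mathbf{e}_1+\cdots+a_n\mathbf{e}_n)\mid a_i\in\mathbb{R}_{>0}\}=g(w)(C_0)$. By Proposition \ref{act contra}, $g(w)=\sigma^*(w)$, so $C(w)=\sigma^*(w)(C_0)$. The final sentence of (b) then follows by invoking the fact recorded in the last displayed remark of Definition \ref{geometric representation}: the chambers of $\Phi$ are exactly $\coprod_{w\in W_Q}\sigma^*(w)(C_0)$, with $w\mapsto\sigma^*(w)(C_0)$ a bijection. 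Combining this with Theorem \ref{number} (the bijection $W_Q\to\sttilt\La$), we get that $w\mapsto C(w)$ is a bijection from basic support $\tau$-tilting $\La$-modules to chambers of $\Phi$.

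I do not expect any genuine obstacle here, since all the substantive content has been front-loaded into Propositions \ref{minimal}, \ref{[Iw]=[Iw]}, \ref{act contra} and into the cited result \cite[Theorem 6.6]{IR} on the extended Dynkin case. The only points requiring a line of care are: (i) making sure the identification $C(w)=g(w)(C_0)$ is stated correctly, which is just unwinding that $g(w)$ acts on $C_0$ by sending the canonical basis vectors to the columns $g^i(w)$; and (ii) noting that the chamber description used in the last clause is exactly the standard fact about the contragredient geometric representation of a finite Coxeter group acting simply transitively on chambers, already quoted in the paper. So the proof is essentially a three-line assembly: rewrite $C(w)$ as $g(w)(C_0)$, substitute $g(w)=\sigma^*(w)$, and quote the chamber/Weyl-group bijection together with Theorem \ref{number}.

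\begin{proof}[Proof of Theorem \ref{weyl group}]
(a) By Theorem \ref{number}, every basic support $\tau$-tilting $\La$-module is isomorphic to $I_w$ for a unique $w\in W_Q$, and by Proposition \ref{act contra} the $g$-matrix of $I_w$ is $g(w)=\sigma^*(w)$. Hence the set of $g$-matrices of support $\tau$-tilting $\La$-modules equals $\sigma^*(W_Q)$. Since $\sigma^*\colon W_Q\to\mathrm{GL}_n(\mathbb{Z})$ is a group homomorphism and $W_Q=\langle s_1,\dots,s_n\rangle$, its image is the subgroup $\langle\sigma^*_1,\dots,\sigma^*_n\rangle$ generated by the $\sigma^*_i$.

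(b) By Definition \ref{def g-vector},
$$C(w)=\{a_1g^1(w)+\cdots+a_ng^n(w)\mid a_i\in\mathbb{R}_{>0}\}=g(w)(C_0),$$
since the columns of $g(w)=(g^1(w),\dots,g^n(w))$ are the images of $\mathbf{e}_1,\dots,\mathbf{e}_n$. By Proposition \ref{act contra}, $g(w)=\sigma^*(w)$, so $C(w)=\sigma^*(w)(C_0)$. Finally, the chambers of $\Phi$ are exactly $\coprod_{w\in W_Q}\sigma^*(w)(C_0)$, with $w\mapsto\sigma^*(w)(C_0)$ a bijection; combined with the bijection $W_Q\to\sttilt\La$, $w\mapsto I_w$, of Theorem \ref{number}, this shows that the cones of basic support $\tau$-tilting $\La$-modules are precisely the chambers of $\Phi$.
\end{proof}
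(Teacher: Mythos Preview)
Your proof is correct and follows essentially the same approach as the paper: both deduce (a) directly from Proposition \ref{act contra} (together with Theorem \ref{number} to know all support $\tau$-tilting modules are of the form $I_w$), and both obtain (b) by rewriting $C(w)$ as the $g(w)$-image of $C_0$, substituting $g(w)=\sigma^*(w)$, and then invoking the standard chamber description. The only trivial discrepancy is that the chamber bijection $w\mapsto\sigma^*(w)(C_0)$ you cite is recorded in the definition block following Definition \ref{geometric representation}, not in that definition itself.
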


\begin{proof}
From Proposition \ref{act contra}, (a) follows. Moreover, we have 

\begin{eqnarray*}
C(w)&=&\{a_1g^1(w)+\cdots+a_ng^n(w)\ |\ a_i\in \mathbb{R}_{>0}\}\\
&=& \{a_1\sigma^*(w)({\bf e}_1)+\cdots+a_n\sigma^*(w)({\bf e}_n) \ |\ a_i\in \mathbb{R}_{>0}\}\\
&=& \sigma^*(w)(C_0). 
\end{eqnarray*}
Thus, (b) follows.
\end{proof}

\begin{exam}
(a) Let $\La$ be the preprojective algebra of type $A_2$. 
In this case, the $g$-matrices of Example \ref{exam1} (a) are given as follows, where  
$\sigma_1^*=\left(\begin{smallmatrix}-1&0\\1 & 1 \end{smallmatrix}\right)$ and $\sigma_2^*=\left(\begin{smallmatrix}1&1\\0 & -1 \end{smallmatrix}\right)$.

\[\xymatrix@C20pt@R20pt{& {\begin{smallmatrix}1&1\\0 & -1 \end{smallmatrix}}\ar[r]^{\sigma_1^*}&
{\begin{smallmatrix}0&1\\-1 & -1 \end{smallmatrix}}\ar[rd]^{\sigma_2^*}&\\
{\begin{smallmatrix}1&0\\0 & 1 \end{smallmatrix}}
\ar[ur]^{\sigma_2^*}\ar[dr]_{\sigma_1^*}&&&{\begin{smallmatrix}\ 0&-1\\-1 &\ 0 \end{smallmatrix}}\\
&{\begin{smallmatrix}-1&0\\1 & 1 \end{smallmatrix}}\ar[r]_{\sigma_2^*}&
{\begin{smallmatrix}-1&-1\\1 & 0\end{smallmatrix}}\ar[ru]_{\sigma_1^*}&}\]

Moreover, their chambers are given as follows.

\[
\xymatrix{ {\begin{smallmatrix}-1\\1 \end{smallmatrix}}   & {\begin{smallmatrix}1\\0 \end{smallmatrix}} &\\
{\begin{smallmatrix}-1\\0 \end{smallmatrix}} \ar@{<->}[rr]^{}  &  &{\begin{smallmatrix}1\\0 \end{smallmatrix}}\\
 & {\begin{smallmatrix}0\\-1 \end{smallmatrix}} \ar@{<->}[uu] &\ar@{<->}[lluu]{\begin{smallmatrix}1\\-1 \end{smallmatrix}} }
\]

(b) Let $\La$ be the preprojective algebra of type $A_3$. 
In this case, the $g$-matrices of Example \ref{exam1} (b) are given as follows.  

\[\xymatrix@C10pt@R8pt{
&   &   &   &   &    {\begin{smallmatrix}0 &1&0 \\ 0& 0&1 \\-1& -1&-1  \end{smallmatrix}}  \ar[rrd]\ar[rrddd]   &    &   &   &   &     \\ 
&   &   & {\begin{smallmatrix} 1&1&0 \\ 0&0 &1 \\0&-1 &-1  \end{smallmatrix}}\ar[rru]\ar[rrd]   &   &         &    & {\begin{smallmatrix} 0&1&1 \\ 0&0 &-1 \\-1&-1 &0  \end{smallmatrix}}\ar[rrdd]   &   &   &     \\ 
&   &   &   &   &    {\begin{smallmatrix} 1&1&1 \\0 & 0& -1\\0&-1 &0  \end{smallmatrix}} \ar[rru]     &    &   &   &   &     \\ 
&  {\begin{smallmatrix} 1&0&0 \\ 0&1 &1 \\0&0 &-1  \end{smallmatrix}} \ar[rruu]\ar[rrdd] &   & {\begin{smallmatrix}1 &1&1 \\0 &-1 &-1 \\0&1 &0  \end{smallmatrix}}\ar[rru]\ar[rrddd]   &   &         &    & {\begin{smallmatrix} 0&-1&0 \\0 &1 &1 \\-1&-1 &-1  \end{smallmatrix}}\ar[rrdd]   &   & {\begin{smallmatrix} 0&0&1 \\0 &-1 &-1 \\-1&0 &0  \end{smallmatrix}}\ar[rdd]   &     \\ 
&   &   &   &   &   {\begin{smallmatrix} -1&-1&0 \\1 &1 &1 \\0&-1 &-1  \end{smallmatrix}}\ar[rru]\ar[rrddd]       &    &   &   &   &     \\ 
{\begin{smallmatrix} 1&0&0 \\ 0&1 & 0\\0&0 &1  \end{smallmatrix}}\ar[uur]\ar[rdd]\ar[r] &  {\begin{smallmatrix} 1&1&0 \\ 0&-1 &0 \\0& 1&1  \end{smallmatrix}}\ar[rruu]\ar[rrdd]  &   &{\begin{smallmatrix}-1 &0&0 \\1 &1 &1 \\0&0 &-1  \end{smallmatrix}}\ar[rru] &   &         &    & {\begin{smallmatrix} 0&0&1 \\-1 &-1 &-1 \\1&0 &0  \end{smallmatrix}}\ar[rruu]\ar[rrdd]   &   & {\begin{smallmatrix}0 &-1&-1 \\0 &1 &0 \\-1&-1 &0  \end{smallmatrix}}\ar[r]   & {\begin{smallmatrix} 0&0&-1 \\0 &-1 &0 \\-1&0 &0  \end{smallmatrix}}     \\ 
&   &   &   &   &    {\begin{smallmatrix}0 &1&1 \\-1 &-1 &-1 \\1&1 &0  \end{smallmatrix}}\ar[rru]      &    &   &   &   &     \\ 
&{\begin{smallmatrix}-1 &0&0 \\1 &1 &0 \\0&0 &1  \end{smallmatrix}}\ar[rruu]\ar[rrdd] &   & {\begin{smallmatrix}0 &1&0 \\-1 &-1 &0 \\1&1 &1  \end{smallmatrix}}\ar[rru]\ar[rrddd]   &   &         &    & {\begin{smallmatrix}-1 &-1&-1 \\1 &1 &0 \\0&-1 &0  \end{smallmatrix}}\ar[rruu]   &   & {\begin{smallmatrix}0 &0&-1 \\-1 &-1 &0 \\1&0 &0  \end{smallmatrix}}\ar[ruu]   &     \\ 
&   &   &   &   &{\begin{smallmatrix} -1&-1&-1 \\1 &0 &0 \\0&1 &0  \end{smallmatrix}} \ar[rru]\ar[rrd]     &    &   &   &   &     \\ 
&   &   & {\begin{smallmatrix}-1 &-1&0 \\ 1& 0& 0\\0&1 &1  \end{smallmatrix}} \ar[rru]\ar[rrd] &   &         &    &{\begin{smallmatrix}0 &-1&-1 \\-1 &0 &0 \\1&1 &0  \end{smallmatrix}}\ar[rruu]   &   &   &   \\
&   &   &   &   &   {\begin{smallmatrix} 0&-1&0 \\ -1&0 &0 \\1&1 &1  \end{smallmatrix}}\ar[rru]      &    &   &   &   &  
  }\] 
\end{exam}


\section{Further connections}\label{further}

In this section, we extend our bijections by combining with other works. We also explain some relationships with other results.  

We have the following bijections (We refer to \cite{BY} for the definitions (g), (h), (i) and (j)). 
\begin{thm}\label{further bijections}Let $Q$ be a Dynkin quiver with vertices $Q_0=\{1,\ldots, n\}$ and $\La$ the preprojective algebra of $Q$. 
There are bijections between the following objects.
\begin{itemize}
\item[(a)]The elements of the Weyl group $W_Q$.
\item[(b)]The set $\langle I_1,\ldots,I_n\rangle$.
\item[(c)]The set of isomorphism classes of basic support $\tau$-tilting $\La$-modules.
\item[(d)]The set of isomorphism classes of basic support $\tau$-tilting $\La^{\op}$-modules.
\item[(e)]The set of torsion classes in $\mod\La$.
\item[(f)]The set of torsion-free classes in $\mod\La$. 
\item[(g)]The set of isomorphism classes of basic two-term silting complexes in $\KKb(\proj\La)$. 
\item[(h)]The set of intermediate bounded co-$t$-structures in $\KKb(\proj\La)$ with respect to the standard co-$t$-structure.
\item[(i)]The set of intermediate bounded $t$-structures in $\DD^{\rm b}(\mod\La)$ with length heart with respect to the standard $t$-structure. 
\item[(j)]The set of isomorphism classes of two-term simple-minded collections in $\DD^{\rm b}(\mod\La)$.
\item[(k)]The set of quotient closed subcategories in $\mod KQ$.
\item[(l)]The set of subclosed subcategories in $\mod KQ$.
\end{itemize}
\end{thm}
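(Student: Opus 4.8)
The plan is to establish the bijections by chaining together results from the literature with the main results of this paper (Theorems \ref{number} and \ref{spt contain}), so that the only genuinely new content here is the identification of (c)/(d) with the objects (e)--(l). The bijections (a) $\leftrightarrow$ (b) $\leftrightarrow$ (c) $\leftrightarrow$ (d) are exactly Theorem \ref{number} together with Theorem \ref{main2}, so I would simply cite those. The crucial structural input for everything else is that $\sttilt\La$ is a \emph{finite} set: this is immediate from Theorem \ref{number} (or the explicit order-of-$W_Q$ table), and it is what unlocks the strongest form of the torsion-class correspondence.

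First I would treat (e) and (f). By Theorem \ref{fftor-spt} there is always a bijection $\sttilt\La\leftrightarrow\fftors\La$ given by $T\mapsto\Fac T$. Since $|\sttilt\La|<\infty$, Theorem \ref{iyama} (the result of \cite{IJ}) tells us that \emph{every} torsion class in $\mod\La$ is functorially finite, hence $\fftors\La$ coincides with the set of all torsion classes; this gives (c) $\leftrightarrow$ (e). For (f), the same finiteness hypothesis gives that every torsion-free class is functorially finite, and one invokes the standard duality $\TT\mapsto\TT^{\perp}$ (or passes through $\La^{\op}$, using that $\mod\La^{\op}$ also has only finitely many support $\tau$-tilting modules by Theorem \ref{number}, applied to $Q^{\op}$, whose preprojective algebra is $\La^{\op}$) to match torsion classes in $\mod\La$ with torsion-free classes in $\mod\La$, and with (d). This is the step where I would be most careful: one must make sure the finiteness needed to apply \cite{IJ} is available on \emph{both} sides, and that the anti-equivalence used for (f) is stated precisely.

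Next, (g) through (j). By \cite{AIR} (the bijection between support $\tau$-tilting modules and two-term silting complexes, sending $T$ to the complex $P_1(T)\to P_0(T)$) one gets (c) $\leftrightarrow$ (g). The remaining equivalences (g) $\leftrightarrow$ (h) $\leftrightarrow$ (i) $\leftrightarrow$ (j) are precisely the content of \cite{KY,BY}: two-term silting complexes correspond to intermediate bounded co-$t$-structures, to intermediate bounded $t$-structures with length heart, and to two-term simple-minded collections, all with respect to the standard (co-)$t$-structures. Here the role of this paper is only to supply a concrete algebra to which those general theorems apply, so I would state them as a citation rather than reprove anything. One small point to verify is that $\mod\La$ is a length category so that the "length heart" condition in (i) is the natural one — this holds since $\La$ is finite-dimensional.

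Finally, (k) and (l). These come from \cite{ORT}: Oppermann--Reiten--Thomas establish a bijection between $\langle I_1,\ldots,I_n\rangle$ (equivalently $W_Q$) and quotient-closed, respectively subclosed, subcategories of $\mod KQ$, via the functor relating $\mod\La$-data to $\mod KQ$-data. So I would cite \cite{ORT} for (b) $\leftrightarrow$ (k) and (b) $\leftrightarrow$ (l), and observe that composing with the bijection (a) $\leftrightarrow$ (b) of Theorem \ref{main2} closes the loop. The main obstacle, such as it is, is not any single hard argument but bookkeeping: making the various bijections compatible (all anchored at $W_Q$ or at $\sttilt\La$), and ensuring each cited theorem's hypotheses — finiteness for \cite{IJ}, the Dynkin/preprojective setup for \cite{ORT}, the two-term framework for \cite{KY,BY} — are verified in the present context. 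I expect the write-up to be short: assemble the citations, invoke $|\sttilt\La|<\infty$ for the torsion-theoretic half, and note that all bijections factor through $W_Q$.
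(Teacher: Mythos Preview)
Your proposal is correct and follows essentially the same route as the paper: bijections (a)--(d) via Theorem~\ref{number} and Theorem~\ref{main2}; (e) and (f) by combining finiteness of $\sttilt\La$ with Theorem~\ref{iyama} and Theorem~\ref{fftor-spt} (the paper packages this as Proposition~\ref{bij torsion}, handling (f) via the duality $D:\mod\La^{\op}\to\mod\La$); (c)$\leftrightarrow$(g) via \cite{AIR} and (g)--(j) via \cite{KY,BY}; and (k), (l) via \cite{ORT}. One small remark: for the (f) step you do not need to reapply Theorem~\ref{number} to $Q^{\op}$, since Theorem~\ref{number} already includes the bijection with $\sttilt\La^{\op}$.
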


We have given bijections between (a), (b), (c) and (d). 
Bijections between (g), (h), (i) and (j) are the restriction of \cite{KY} and it is given in \cite[Corollary 4.3]{BY} (it is stated for Jacobian algebras, but it holds for any finite dimensional algebra as they point out). 
Note that compatibilities of mutations and partial orders of these objects are discussed in \cite{KY}.

We will give bijections between (a), (e) and (f) by showing the following statement, which provides 
complete descriptions of torsion classes and torsion-free classes in $\mod\La$. 

\begin{prop}\label{bij torsion}
Any torsion class in $\mod\La$ is given as $\Fac I_w$ and any torsion-free class in $\mod\La$ is given as $\Sub \La/I_w$ for some $w\in W_Q$. 
Moreover, there exist bijections between the elements of $W_Q$, torsion classes and torsion-free classes. 
\end{prop}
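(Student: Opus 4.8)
The plan is to deduce the statement from the finiteness of $\sttilt\La$ together with the correspondences already in hand: the bijection $W_Q\cong\sttilt\La\cong\langle I_1,\dots,I_n\rangle$ (Theorems \ref{spt contain} and \ref{number}), the torsion-class dictionary of \cite{AIR} (Theorem \ref{fftor-spt}), the functorial-finiteness criterion of \cite{IJ} (Theorem \ref{iyama}), and the duality of \cite{ORT} recalled in Proposition \ref{ort dual}.

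First I would note that $|\sttilt\La|=|W_Q|<\infty$ by Theorem \ref{number}. Hence Theorem \ref{iyama} applies, so every torsion class and every torsion-free class in $\mod\La$ is functorially finite, and likewise over $\La^{\op}$. In particular $T\mapsto\Fac T$ is a bijection from $\sttilt\La$ onto the set of \emph{all} torsion classes of $\mod\La$ (Theorem \ref{fftor-spt}). Precomposing with $w\mapsto I_w$ and using that every basic support $\tau$-tilting module is some $I_w$ (Theorem \ref{spt contain}), I get that $w\mapsto\Fac I_w$ is a bijection from $W_Q$ onto the set of torsion classes of $\mod\La$; here $\Fac I_w$ really is a torsion class since $I_w\in\sttilt\La$ by Theorem \ref{main1}.

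For torsion-free classes I would use the exact contravariant duality $D\colon\mod\La\to\mod\La^{\op}$, which interchanges closure under quotients with closure under submodules and preserves extensions, hence restricts to a bijection from torsion-free classes of $\mod\La$ onto torsion classes of $\mod\La^{\op}$, given by $\Sub_\La M\mapsto\Fac_{\La^{\op}}DM$. Since $\La^{\op}$ is the preprojective algebra of $Q^{\op}$ and $W_{Q^{\op}}=W_Q$, the previous paragraph applied to $\La^{\op}$ shows that the torsion classes of $\mod\La^{\op}$ are exactly the $\Fac_{\La^{\op}}(I^{\op}_u)$ for $u\in W_Q$, and that there are $|W_Q|$ of them. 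Now Proposition \ref{ort dual}(a), applied to $\La^{\op}$ and combined with the evident identification of the ideal $I^{\op}_u$ of $\La^{\op}$ with the ideal $I_{u^{-1}}$ of $\La$ under the equality of underlying sets, identifies $DI^{\op}_u$ with a module of the form $\La/I_w$; running this through the displayed bijection shows that the torsion-free classes of $\mod\La$ are exactly the $\Sub_\La\La/I_w$, $w\in W_Q$, and that $w\mapsto\Sub_\La\La/I_w$ is a bijection, being a surjection onto a set of cardinality $|W_Q|$. Composing the two bijections just obtained yields the asserted bijections among $W_Q$, the torsion classes and the torsion-free classes.

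The conceptual content is entirely in the quoted theorems; the one genuinely delicate step is the last one, where one must keep careful track of left versus right modules, of the passage to the opposite algebra, and of the precise Weyl-group element produced by Proposition \ref{ort dual} (some reindexing by $w\mapsto w^{-1}$ and $w\mapsto w_0w$ seems unavoidable), so as to land exactly on the normal form $\Sub_\La\La/I_w$ rather than on a twisted variant. The remaining verifications — that $\Fac I_w$ and $\Sub\La/I_w$ are genuinely a torsion and a torsion-free class, and that the various maps are injective — are routine, following from the support $\tau$-tilting property of $I_w$ and a cardinality count.
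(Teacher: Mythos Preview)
Your proposal is correct and follows essentially the same route as the paper: finiteness of $\sttilt\La$ from Theorem \ref{number}, then Theorem \ref{iyama} to make every torsion(-free) class functorially finite, then Theorem \ref{fftor-spt} for the bijection $w\mapsto\Fac I_w$, and finally the duality $D$ together with Proposition \ref{ort dual} for the torsion-free side. The only difference is that the paper handles the torsion-free case more directly by invoking Proposition \ref{ort dual}(b) to get $D(\Fac I_w)\cong\Sub(DI_w)\cong\Sub\La/I_{w_0w^{-1}}$ in $\mod\La$, avoiding your detour through $\La^{\op}$ as the preprojective algebra of $Q^{\op}$ and the identification $I^{\op}_u=I_{u^{-1}}$; this spares exactly the delicate left/right bookkeeping you flag at the end.
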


\begin{proof}
Since there exists only finitely many support $\tau$-tilting modules by Theorem \ref{number}, 
Theorem \ref{iyama} implies that any torsion class (torsion-free class) is  functorially finite. Thus, Theorem \ref{fftor-spt} gives a bijection between basic support $\tau$-tilting $\La$-modules and torsion classes in $\mod\La$, which is  
given by the map $I_w\mapsto\Fac I_w$. 

Similarly, there exists a bijection between basic support $\tau$-tilting $\La^{\op}$-modules and torsion classes in $\mod\La^{\op}$. 
By the duality $D:\mod\La^{\op}\to\mod\La$, 
any torsion-free class of $\mod\La$ is given as $D(\Fac I_w)$, where $\Fac I_w$ is a torsion class in $\mod\La^{\op}$. 
Then, by Proposition \ref{ort dual}, 
we have $D(\Fac I_w)\cong\Sub (DI_w)\cong \Sub \La/I_{w_0w^{-1}},$ where $w_0$ is the longest element in $W_Q$. 
\end{proof}

\begin{remk}
It is shown that objects $\Fac I_w$ and $\Sub \La/I_w$ have several nice properties. 
For example, $\Fac I_w$ and $\Sub \La/I_w$ are \emph{Frobenius} and, moreover, \emph{stable 2-CY} categories which have cluster-tilting objects \cite{BIRS,GLS3}. 
\end{remk}

It is known that weak order of $W_Q$ is a \emph{lattice} (i.e. any two elements $x,y\in W_Q$ have a greatest lower bound, called \emph{meet} $x\bigwedge y$ and a least upper bound, called \emph{join} $x\bigvee y$). 
From the viewpoint of Theorem \ref{bij torsion}, 
it is natural to consider what is a join and meet in torsion classes. 
We give an answer to the question as follows. 

\begin{prop}\label{lattice}Let $\TT$ and $\TT'$ be torsion classes of $\mod\La$. 
Then we have $\TT\bigwedge\TT'=\TT\bigcap\TT'$ and $\TT\bigvee\TT'=\Tors\{\TT,\TT'\}$, where $\Tors\{\TT,\TT'\}$ is the smallest torsion class in $\mod\La$ containing $\TT$ and $\TT'$.
\end{prop}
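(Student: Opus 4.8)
The plan is to establish the two equalities separately, exploiting the bijection of Proposition~\ref{bij torsion} between torsion classes and elements of $W_Q$ together with the lattice structure on $(W_Q,\leq_L)$ and the anti-isomorphism $(W_Q,\leq_L)\cong(\sttilt\La,\leq)^{\op}$ of Theorem~\ref{main3}. Since $|\sttilt\La|<\infty$, every torsion class is functorially finite, so the poset of all torsion classes of $\mod\La$ (ordered by inclusion) is a finite poset; and via $T\mapsto\Fac T$ it is isomorphic to $(\sttilt\La,\leq)$, hence anti-isomorphic to $(W_Q,\leq_L)$, which is known to be a lattice. Therefore the poset of torsion classes is itself a (complete, since finite) lattice, so the meet $\TT\bigwedge\TT'$ and join $\TT\bigvee\TT'$ exist; it remains only to identify them concretely.

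For the join, I would argue abstractly: $\Tors\{\TT,\TT'\}$ is by definition the smallest torsion class containing both $\TT$ and $\TT'$. Any torsion class $\mathcal U$ with $\mathcal U\supseteq\TT$ and $\mathcal U\supseteq\TT'$ satisfies $\mathcal U\supseteq\Tors\{\TT,\TT'\}$ (as $\Tors\{\TT,\TT'\}$ is the smallest such); and conversely $\Tors\{\TT,\TT'\}$ is itself a torsion class containing $\TT$ and $\TT'$. This is exactly the universal property of the least upper bound in the poset of torsion classes ordered by inclusion, so $\TT\bigvee\TT'=\Tors\{\TT,\TT'\}$. The only point needing care is that such a smallest torsion class exists at all, which follows because the intersection of any family of torsion classes is again a torsion class (closure under factor modules and extensions is preserved under arbitrary intersection), so one may take $\Tors\{\TT,\TT'\}$ to be the intersection of all torsion classes containing $\TT\cup\TT'$.

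For the meet, the claim is $\TT\bigwedge\TT'=\TT\bigcap\TT'$. First, $\TT\bigcap\TT'$ is a torsion class by the intersection remark above, and it is contained in both $\TT$ and $\TT'$, so it is a lower bound. Second, if $\mathcal U$ is any torsion class with $\mathcal U\subseteq\TT$ and $\mathcal U\subseteq\TT'$, then $\mathcal U\subseteq\TT\bigcap\TT'$ trivially. Hence $\TT\bigcap\TT'$ is the greatest lower bound, i.e. the meet. This half requires essentially nothing beyond the closure properties of torsion classes; no appeal to $W_Q$ or finiteness is needed for the meet, although finiteness (via Theorem~\ref{iyama} and Theorem~\ref{number}) is what guarantees we are genuinely working inside the lattice $\sttilt\La$ and that $\TT\bigcap\TT'$ is again functorially finite and corresponds to some $\Fac I_w$.

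The main obstacle — really the only substantive issue — is making precise that ``join in the lattice of torsion classes'' coincides with the notion intended in the statement, and ensuring that the lattice operations transported across the chain of (anti-)isomorphisms $W_Q\leftrightarrow\sttilt\La\leftrightarrow\{\text{torsion classes}\}$ are the order-theoretic meet and join with respect to \emph{inclusion} of torsion classes. Once one fixes the convention that torsion classes are ordered by inclusion and transports the lattice structure of $(W_Q,\leq_L)$ through Proposition~\ref{bij torsion} and Theorem~\ref{main3} (noting the order-reversal is harmless since a lattice is anti-isomorphic to a lattice, with meet and join swapped, and then swapped back by the second anti-isomorphism), the identification of $\bigwedge$ with $\bigcap$ and $\bigvee$ with $\Tors\{-,-\}$ is forced by the respective universal properties. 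I would present the proof in the order: (1) recall the lattice of torsion classes via finiteness; (2) verify intersections of torsion classes are torsion classes, giving both existence of $\Tors\{\TT,\TT'\}$ and the meet formula; (3) conclude the join formula from the universal property.
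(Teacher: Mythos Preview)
Your proposal is correct and takes essentially the same approach as the paper: the paper's proof simply observes that $\TT\cap\TT'$ and $\Tors\{\TT,\TT'\}$ are torsion classes, and that every torsion class in $\mod\La$ is functorially finite by Theorems~\ref{iyama} and~\ref{number}, whence the universal properties of meet and join immediately give the identifications. Your preliminary detour through the Weyl group lattice is harmless but unnecessary, since (as you yourself note in step~(2)) the poset of all torsion classes is already a complete lattice because arbitrary intersections of torsion classes are torsion classes.
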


\begin{proof}
It is clear that $\TT\bigcap\TT'$ and $\Tors\{\TT,\TT'\}$ are torsion classes. 
Since any torsion class in $\mod\La$ is functorially finite by Theorem \ref{iyama} and \ref{number}, the statement follows.
\end{proof}

Next we will see bijections between (a), (k) and (l), which are given by \cite{ORT}. We briefly recall their results and explain a relationship with our results. 
 
Let $X$ be a $\La$-module. We denote by $X_{KQ}$ the $KQ$-module by the restriction, that is, we forget the action of the arrows $a^*\in \overline{Q}$. 
Then we define $(-)_{KQ}:\mod\La\to\mod KQ, X\mapsto (X)_{KQ}:=\add X_{KQ}.$ 
Take $I_w$ for $w\in W_Q$. 
In \cite{ORT}, the authors show that $(I_w)_{KQ}$ (respectively, $(\La/I_w)_{KQ}$) is a quotient closed subcategory (respectively, subclosed subcategory) of $\mod KQ$, and any quotient closed subcategory (respectively, subclosed subcategory) is given in this form. 

Now, we can conclude that the functor $(-)_{KQ}$ gives a bijection between (e) and (k) (similarly (f) and (l)).   
Indeed, any torsion class is given by $\Fac I_w$ for some $w\in W_Q$ from Proposition \ref{bij torsion} and we have $(\Fac I_w)_{KQ}=(I_w)_{KQ}$ by a result of \cite{ORT}. 

Moreover, it is natural to consider when a quotient closed subcategory $(I_w)_{KQ}$ is a torsion class. 
The answer is given by \cite[Proposition 10.4]{ORT} and they show that elements of $W_Q$ satisfying a certain condition called being \emph{c-sortable} \cite{Re} are in bijection with torsion classes of $\mod KQ$ by this correspondence. 
We emphasize that there exist bijections between torsion classes of $\mod KQ$ and several important objects such as clusters in the cluster algebra given by $Q$ (we refer to \cite[Theorem 1.1]{IT} for more details). 
Thus, there exists an inclusion from the objects of \cite[Theorem 1.1]{IT} 
to the objects of Theorem \ref{further bijections}.

Finally, we provide a bijection between (c) and (g), which is given by \cite[Theorem 3.2]{AIR}, and explain a definition of $g$-vectors. 
Let $\KKb(\proj\La)$ be the homotopy category of bounded complexes of $\proj\La$. For a $\La$-module $X$, take a minimal projective presentation of $X$
\[\xymatrix{P_1(X)\ar[r]& P_0(X)\ar[r]&X \ar[r]&0.}\]
We denote by $P_X:=(\overset{-1}{P_1(X)}\to \overset{0}{P_0(X)})\in \KKb(\proj\La)$. 
Then, for a support $\tau$-tilting pair $(X,P)$ for $\La$, the map $(X,P)\mapsto P_X\oplus P[1]\in \KKb(\proj\La)$ gives a two-term silting complex in $\KKb(\proj\La)$ and it provides a bijection between (c) and (g). 
Note that the $g$-vector of $(X,P)$ is given as a class of the corresponding silting complex $P_X\oplus P[1]$ in the Grothendieck group $K_0(\KKb(\proj\La))$ \cite[section 5]{AIR}, and Definition \ref{def g-vector} is given by this correspondence.    
We also remark that, since $\La$ is selfinjective, it is shown that a $\nu$-\emph{stable} support $\tau$-tilting $\La$-module $X$ (i.e $X\cong\nu(X)$ for $\nu:=D\Hom_\La(-,\La)$) gives a two-term tilting complex by this correspondence \cite{M}. From Theorem \ref{number}, 
there exist only finitely many basic two-term tilting complexes in $\KKb(\proj\La)$.


\end{document}